\documentclass[12pt, oneside]{article}   	
\usepackage{geometry}
\usepackage{graphicx}				

\usepackage{indentfirst}
\usepackage{hyperref}
\usepackage{tikz-cd}
\usepackage{ulem}	
\usepackage{amsthm}
\usepackage{amsfonts}
\usepackage{amsmath}
\usepackage{amscd}
\usepackage{enumerate}
\usepackage{amssymb}	
\usepackage{verbatim}
\usepackage{cancel}
\usepackage{cite}
\usepackage{textcomp}								
\usepackage{color}
\usepackage{mathrsfs}
\usepackage{appendix}
\usepackage[nottoc]{tocbibind}

\makeatletter
\newcommand{\subjclass}[2][2020]{\def\@subjclass{#2}\def\@subjclassyear{#1}}
\newcommand{\keywords}[1]{\def\@keywords{#1}}

\newcommand{\printinfo}{\let\oldthefootnote\thefootnote
  \renewcommand{\thefootnote}{} \footnotetext{\hspace{-1.8em}\begin{minipage}[t]{\textwidth}
      \small
      \textbf{\@subjclassyear\ Mathematics Subject Classification.} \@subjclass. \\
      \textbf{Keywords.} \@keywords.
    \end{minipage}
  }
  \let\thefootnote\oldthefootnote
  \addtocounter{footnote}{-1}
}
\makeatother

\newtheorem{example}{Example}[section]							
\newtheorem{theorem}{Theorem}[section]
\newtheorem{definition}{Definition}[section]
\newtheorem{proposition}{Proposition}[section]
\newtheorem{lemma}{Lemma}[section]
\newtheorem{remark}{Remark}[section]

\newtheorem{corollary}{Corollary}[section]

\def\rd{\mathrm d}

\def\top{\operatorname{top}}

\def\vol{\operatorname{vol}}

\def\be{\begin{align}}
\def\ee{\end{align}}

\title{Bernoulli property for skew products of exponentially mixing systems over Anosov bases}
\author{Changguang Dong, Qiujie Qiao}
\subjclass[2020]{Primary 37A35; Secondary 37A25, 37C85}
\keywords{Bernoulli property, skew products, exponential mixing, SRB measures}
\date{\today}

\begin{document}

\maketitle

\printinfo

\begin{abstract}
We study $C^{1+\beta}$ skew products over topologically mixing Anosov diffeomorphisms, 
with fiber dynamics generated by a locally free $\mathbb{R}^k$-action. 
For a Gibbs measure on the base and a cocycle with a nonzero average, 
we establish a Bernoulli criterion for ergodic invariant product measures. 
The fiber hypotheses apply to actions that are exponentially mixing with respect to the SRB measure, 
as well as volume measures that are almost exponentially mixing with a limit SRB measure. 

As an application, 
when the fibers are generated by geodesic flows or Weyl chamber actions, 
our results reveal a statistical phase transition determined by the drift. 
\end{abstract}

\tableofcontents

\section{Introduction}\label{sect:Main results}

Smooth dynamical systems exhibit a wide range of ergodic and statistical properties. 
Among these, the Bernoulli property, exponential mixing, 
and the Central Limit Theorem (CLT) are three central features
describing the statistical behavior of chaotic systems.
Exponential mixing controls the decay of correlations of regular observables, 
while the CLT describes the fluctuations of their Birkhoff sums. 
In the smooth setting, the latter two are quantitative properties 
that leverage the underlying smooth structure. 
In contrast, the Bernoulli property is one of the strongest measure-theoretic 
notions of randomness and does not require such additional structure.

Although exponential mixing is a quantitative property,
its connection to stronger ergodic properties long remained non-trivial. 
In \cite{Kan18}, Kanigowski used exponential mixing to establish
the Bernoulli property for $K$-property on semisimple
homogeneous spaces associated with irreducible lattices.
Subsequently, in \cite{DKRH24}, Dolgopyat, Kanigowski, and Rodriguez Hertz
established that for smooth measures, exponential mixing implies
the Bernoulli property.
More recently, Maldonado \cite{Mal25} obtained the corresponding
implication for SRB measures.
He also established the Bernoulli property for nonatomic
limit SRB measures arising from almost exponentially mixing
volume, in the setting of the exponential volume limits
studied by Ben Ovadia and Rodriguez Hertz \cite{BORH23-2}. 
These results reveal a strong connection between 
quantitative mixing and the Bernoulli property 
under suitable assumptions on the invariant measure. 

Skew products provide a natural setting for this investigation.
The statistical behavior of such systems depends not only on the
dynamics of the base and fiber factors,
but also on the interaction governed by the cocycle.

For compact group extensions of hyperbolic diffeomorphisms,
Dolgopyat \cite{Dol02} related mixing rates to properties of
stable and unstable holonomies and showed that the
decay of correlations is generically faster than every polynomial.
Field, Melbourne, and T\"or\"ok \cite{FMT03} obtained
exponential correlation decay for equivariant H\"older
observables on certain group extensions 
of hyperbolic basic sets, together with limit theorems
under the corresponding hypotheses. 
For circle extensions of piecewise $C^2$ uniformly expanding
covering maps,
Butterley and Eslami \cite{BE17} proved exponential mixing
when the cocycle is not Lipschitz cohomologous to a
piecewise constant function.
Recently, Doucette studied the Bernoulli property for skew products on 
$M \times \operatorname{PSL}_2(\mathbb{R})/\Gamma$, 
whose proof does not require the base dynamics to be exponentially mixing \cite{Dou26}. 

Let $f\colon X\to X$ be a diffeomorphism of a closed manifold,
let $\{\rho_{\mathbf t}\}_{\mathbf t\in\mathbb R^k}$
be a smooth $\mathbb R^k$-action on a closed manifold $Y$,
and let $\tau\colon X\to\mathbb R^k$ be a continuous cocycle.
We define the skew product $F$ as
$$
F(x,y)=\left(f(x),\rho_{\tau(x)}y\right).
$$
If $\mu$ is an $f$-invariant probability measure and $m$
is invariant under the fiber action, then $\mu\times m$
is $F$-invariant.
We call $\mu(\tau):=\int_X\tau\,\mathrm d\mu$ the drift of the cocycle.

In \cite{DDKN22-1,DDKN22-2},
Dolgopyat, Dong, Kanigowski, and N\'andori
systematically studied the ergodic and statistical properties
of smooth skew products with an $\mathbb R^k$-action on the fiber.
In particular, \cite[Theorems 1.5 and 2.7]{DDKN22-2}
provide zero-drift examples over Anosov bases with
positive-entropy homogeneous fiber dynamics.
By choosing the dimension appropriately, 
the authors obtained, for each prescribed $m\in\mathbb N$, 
examples with correlation decay $O(n^{-m})$ that satisfy the $K$-property 
and the CLT but are not Bernoulli. 
These examples show that even rapid polynomial decay of
correlations, together with the $K$-property and the CLT,
does not ensure the Bernoulli property. 

In the zero drift setting, 
a different behavior occurs for certain zero entropy fibers. 
The first named author and Kanigowski \cite{DK22}
established a Bernoulli criterion
for extensions of hyperbolic systems by quasi-elliptic
zero entropy flows,
under suitable assumptions on an aperiodic zero-mean cocycle
and on the fiber flow.
Their applications include
typical translation flows on surfaces and suitable smooth
reparametrizations of isometric flows on $\mathbb T^2$.  
These results show that 
the fiber dynamics and their interaction with the cocycle
are also essential. 
This naturally raises the question of whether 
positive entropy fiber dynamics with nonzero drift 
possess the Bernoulli property. 

In this paper, we investigate the statistical properties of the invariant
product measure $\mu\times m$, 
where the base map $f$ is a topologically mixing Anosov diffeomorphism, 
$\mu$ is a Gibbs equilibrium state of a H\"older potential, 
and the cocycle exhibits a nonzero drift ($\mu(\tau)\ne\mathbf 0$).
Since $\mu$ need not be an SRB measure, the product measure fails 
to satisfy the smoothness or SRB hypotheses of previous criteria,
even when the invariant fiber measure $m$ is smooth or SRB. 
Consequently, those criteria do not directly apply to our setting.

We establish a Bernoulli criterion for such systems. 
Under the stated exponential mixing or limit SRB hypotheses,
our approach yields the Bernoulli property 
without requiring the base measure to be SRB. 
As an application, we investigate nonzero drift extensions with 
geodesic flows or Weyl chamber actions in the fiber 
(the same dynamics studied in \cite{DDKN22-2}). 
Furthermore, we prove that systems with nonzero drift are Bernoulli, 
exhibit exponential mixing of all orders and satisfy the CLT, 
revealing a statistical phase transition as discussed in Remark \ref{remark:Statistical Phase Transition}. 

\subsection{Main results}\label{subsect: Main results}

We first specify the mixing assumptions used in our results.

When $0<r<1$, $C^r(Y)$ denotes the H\"older space with exponent $r$, equipped with the norm $\|\varphi\|_{C^r}:=\|\varphi\|_{C^0}+|\varphi|_{C^r}$, where 
\begin{align*}
\left\|\varphi\right\|_{C^0}:=\sup_{y\in Y} |\varphi(y)|,\quad 
|\varphi|_{C^r}:=\sup_{x,y\in Y, \, x\neq y} \frac{|\varphi(x)-\varphi(y)|}{|x-y|^r}. 
\end{align*}
When $r\in\mathbb{N}$, $C^r(Y)$ denotes the standard $C^r$ function space equipped with the standard $C^r$ norm $\|\cdot\|_{C^r}$. 
When $r>1$ is a non-integer, $C^r(Y)$ is the space consisting of $C^{\lfloor r \rfloor}$ functions whose $\lfloor r \rfloor$-th derivatives are H\"older continuous with exponent $r-\lfloor r \rfloor$, equipped with the norm $\|\cdot\|_{C^r} = \|\cdot\|_{C^{\lfloor r\rfloor}} + |\cdot|_{C^{r-\lfloor r\rfloor}}$. 

\begin{definition}\label{def:exponentially mixing}
Let $\{\rho_{\bf t}\}_{{\bf  t}\in\mathbb{R}^k}$ be an $\mathbb{R}^k$-action on a compact metric space $Y$, and let $\nu$ denote its invariant measure. 
The action is said to be \textbf{exponentially mixing} if the inequality 
\begin{align}\label{DDKN22-1-(3.1)}
\left| \int B_1(y)B_2(\rho_{\bf t} y)\,\rd\nu(y) - \nu(B_1)\nu(B_2) \right|
\leq Ce^{-\delta_0\|{\bf t}\|} \|B_1\|_{C^r}\|B_2\|_{C^r}
\end{align}
holds for all $B_1, B_2 \in C^r(Y)$ for some $r>0$, $C>0$, and $\delta_0>0$. 

Furthermore, the action $\{\rho_{\bf t}\}$ is said to be \textbf{exponentially mixing of order $s$} if
$$
\left|
\nu\left( \prod_{j=1}^s B_j\big(\rho_{{\bf t}_j} y\big) \right)
- \prod_{j=1}^s \nu(B_j)
\right|
\leq C e^{-\delta\,l({\bf t}_1,\dots,{\bf t}_s)}
\prod_{j=1}^s \|B_j\|_{C^r}
$$
holds for all $B_1, \dots, B_s \in C^r(Y)$ for some $r>0$, $C>0$, and $\delta>0$, where 
$$
l({\bf t}_1,\dots,{\bf t}_s) = \min_{i\neq j} \big\|{\bf t}_i - {\bf t}_j\big\|.
$$
The action is {\bf exponentially mixing of all orders} 
if it is exponentially mixing of order $s$ for every integer $s\ge 2$. 
The above definitions extend naturally to smooth diffeomorphisms.
\end{definition}

In the exponentially mixing setting, if the invariant measure $\nu$ is smooth, 
then by \cite[Lemma B.1]{DKRH24}, the validity of \eqref{DDKN22-1-(3.1)} 
for some $r>0$ implies that it holds for all $r'>0$ 
(with constants $C$ and $\delta$ possibly depending on $r'$). 

\begin{definition}\label{def:almostmixing}
Let $\{\rho_{\bf t}\}$ be a $\mathbb{R}^k$-action on a closed manifold $Y$, 
and let $m_{\vol}$ denote the normalized Riemannian volume measure on $Y$, 
not necessarily invariant. 
We say that \textbf{volume is almost exponentially mixing} for the action $\{\rho_{\bf t}\}$ 
if there exists $r>0,C,\delta_0>0$ such that for all $g,h\in C^r(Y)$ 
with $\int h\,\rd m_{\vol}=0$ and for all $\mathbf t\in\mathbb R^k$, 
$$
\left|\int g\circ \rho_{\bf t}\, h\,\rd m_{\vol}\right|
\le Ce^{-\delta_0 \|{\bf t}\|} \|g\|_{C^r}\|h\|_{C^r}.
$$
The above definitions extend naturally to arbitrary smooth diffeomorphisms.
\end{definition}

For a $C^{1+\beta}$ diffeomorphism $f$ on a closed manifold $X$, 
Ben Ovadia and Rodriguez Hertz proved in \cite{BORH23-2} that 
if the volume measure $m_{\vol}$ is almost exponentially mixing, 
there exists an $f$-invariant Borel measure $\mu$, 
obtained as a limit point of the sequence $\{f^n_* m_{\vol}\}_{n \in \mathbb{N}}$, 
such that for all $g, h \in C^r(X)$:
\begin{align}\label{Mal25-(1.2)}
\left|\int_X g \circ f^n \cdot h \, \rd m_{\vol} - \int_X g \, \rd\mu \int_X h \, \rd m_{\vol}\right| \le C e^{-\delta_0 n} \|g\|_{C^r} \|h\|_{C^r}.
\end{align}
Moreover, if $\mu$ is non-atomic, it is an ergodic SRB measure, 
which the authors refer to as the \textbf{limit SRB measure}. 
In \cite{Mal25}, Maldonado further established that $(X,f,\mu)$ is Bernoulli, 
confirming a conjecture posed in \cite{BORH23-2}.

The following theorem is the main Bernoulli criterion. 
Note that throughout this paper, 
the Gibbs measure for the topologically mixing Anosov diffeomorphism 
always means the unique equilibrium measure for 
some H\"older continuous potential \cite{Bow75,KH95}. 

\begin{theorem}\label{main theorem-Bernoulli}
Let $f\colon X\to X$ be a topologically mixing $C^{1+\beta}$ 
Anosov diffeomorphism on a closed manifold $X$, and let $\mu$ be a Gibbs measure for $f$. 
Let $\rho_{\bf t}$ be a $C^{1+\beta}$ locally free $\mathbb{R}^k$-action on a closed manifold $Y$, 
and let $\tau: X \to \mathbb{R}^k$ be a $C^{1+\beta}$ cocycle satisfying $\mu(\tau)\neq {\bf 0}$. 
Define the $C^{1+\beta}$ skew product $F\colon X\times Y\to X\times Y$ by
$$
F(x,y):=\left(f(x),\rho_{\tau(x)}(y)\right).
$$
Suppose that either the action $\{\rho_{\bf t}\}$ is exponentially mixing 
with respect to $m$, where $m$ is an SRB measure for $\rho_{\mathbf{t}}$ 
for all $\mathbf{t} \in \mathbb{R}^k$, or the volume measure $m_{\vol}$ is almost exponentially mixing 
and $m$ is the limit SRB measure of $\rho_{\mathbf{t}}$ for any $\mathbf{t} \in \mathbb{R}^k$. 
If the measure $\mu\times m$ is ergodic, 
then $(X\times Y, F,\mu\times m)$ is Bernoulli. 
\end{theorem}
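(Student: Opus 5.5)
The plan is to verify the \emph{very weak Bernoulli} (VWB) property of $(X\times Y,F,\mu\times m)$ via Ornstein theory, following the strategy of \cite{DKRH24,Mal25}. We build a partition $\mathcal P$ of $X\times Y$ with piecewise smooth boundaries. Its elements are products of small boxes in $X$, small with respect to the Markov structure, and small balls in $Y$. Such a partition is generating, because $F$ is expansive transversally to the orbits of the action, and we can arrange $(\mu\times m)(\partial_\varepsilon\mathcal P)=O(\varepsilon^{\kappa})$. By Ornstein, it then suffices to show the following: for every $\epsilon>0$ and all large $n$, most atoms $A$ of $\bigvee_{i=0}^{N}F^{i}\mathcal P$ have the property that the conditional law of $\bigvee_{i=1}^{n}F^{-i}\mathcal P$ restricted to $A$ is $\bar d$-close to its unconditional law.

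To check this we will produce a \emph{local unstable-type} foliation $\mathcal W$ with three properties. It must refine the past so that conditioning on the past is approximated by conditioning on $\mathcal W$-plaques. It must be expanded by $F$. Its plaques must equidistribute under $F^n$ with an exponential rate. The natural choice is $\mathcal W(x,y)=W^u_f(x)\times \mathcal V(y)$, where $\mathcal V(y)$ is the fiber unstable-type set. In the first case this is the unstable lamination of $\rho_{\mathbf t}$ in the drift direction $\mathbf t=\mu(\tau)$, along which $m$ has absolutely continuous conditionals. In the almost-exponential-mixing case it is a small smooth disc carrying normalized volume, used as in \cite{Mal25}. The conditional measures of $\mu\times m$ on these plaques are then products. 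On the base plaque we use the Gibbs conditionals, which have H\"older densities with respect to the unstable conditional of the Gibbs measure, so no SRB assumption is needed. On the fiber plaque we use the SRB or volume conditionals.

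The key estimate is an effective equidistribution statement. For a plaque $W$ with its product conditional $\nu_W$ and H\"older observables $\Phi$ on $X\times Y$, we want
$$
\Bigl|\int \Phi\circ F^n\,\rd\nu_W-\int\Phi\,\rd(\mu\times m)\Bigr|\le Ce^{-\eta n}\|\Phi\|_{C^r}.
$$
The first ingredient is the iteration identity $F^n(x,y)=(f^nx,\rho_{S_n\tau(x)}y)$, where $S_n\tau$ is the Birkhoff sum. We split the base plaque into pieces of size $e^{-\lambda n/2}$, and on each piece we freeze $S_{n/2}\tau$ up to a small error, using that $\tau$ is $C^{1+\beta}$. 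Next we use exponential mixing of $(f,\mu)$ for H\"older functions, obtained through Markov partitions and the Ruelle transfer operator, together with large deviations for $S_n\tau$. Since $\mu(\tau)\ne\mathbf 0$, these give $\|S_n\tau(x)\|\ge cn$ off a set of exponentially small measure. On that good set, exponential mixing of the fiber action, or almost exponential mixing toward the limit SRB measure $m$, yields $e^{-\delta_0 cn}$ decorrelation in $Y$. The base contributes its own exponential mixing. Ergodicity of $\mu\times m$ is used to exclude a cohomological obstruction that would otherwise keep $S_n\tau$ in a lattice coset, and it identifies the limit as $\mu\times m$.

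With the equidistribution estimate in hand, we run the Dolgopyat–Kanigowski–Rodriguez Hertz argument. We approximate atoms of $\bigvee_{i=1}^n F^{-i}\mathcal P$ by smooth functions, losing $O(\varepsilon^\kappa)$ through the boundary estimate. Equidistribution of $F^{n}$-images of plaques, which are contained in atoms of the past, gives a coupling between the conditional and unconditional processes. Bounded distortion along $\mathcal W$ makes this coupling have $\bar d$-cost at most $\epsilon$, and we conclude VWB and hence the Bernoulli property. I expect the main obstacle to be the equidistribution step on the fiber in the almost-exponential-mixing case. There $m$ is not invariant under an explicit smooth conditional structure, so we must use \eqref{Mal25-(1.2)} with the random time $S_n\tau(x)$ varying along the base plaque. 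The first attempt would be to freeze the time on $e^{-\lambda n}$-subplaques and absorb the variation using the local freeness and $C^{1+\beta}$ regularity of the action.
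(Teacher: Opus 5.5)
\textbf{Genuine gap: from equidistribution to very weak Bernoulli.} You propose to approximate atoms of $\bigvee_{i=1}^{n}F^{-i}\mathcal P$ by smooth functions and then apply a time-$n$ equidistribution estimate. This cannot work.
\begin{itemize}
\item These atoms lie in Bowen balls of length $n$. They are therefore exponentially thin in the unstable directions and have exponentially small measure.
\item Any $C^r$ approximant has norm at least of order $e^{cn}$, which swamps your error $Ce^{-\eta n}\|\Phi\|_{C^r}$. So your estimate controls nothing about the sets that determine the $n$-name, and by itself it produces no coupling of small $\bar d$-cost.
\end{itemize}
What $\bar d$ actually requires is a map pairing points whose orbits stay close at most times $k\le n$. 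The mechanism of \cite{DKRH24,Mal25}, reproduced in Lemmas~\ref{Mal25-Lemma 5.2} and \ref{Mal25-Lemma 6.4}, builds exactly this. Plaques are equidistributed only up to the short time $m_n=\lfloor\epsilon n\rfloor$. The targets are blocks of unstable diameter at most $\epsilon^2M_0^{-n}$, saturated by $(\epsilon,n)$-fake center-stable leaves. Points of one block stay $\epsilon$-close for $m_n\le k\le n$ by (F3), which is (B2). One then matches block by block and applies Corollary~\ref{DKRH24-Corollary 2.21}.

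In the skew product the matching must also keep the clocks aligned. If a point over $x$ is matched with a point over $x'\neq x$, you need $\sup_{k\le n}\|S_k\tau(x)-S_k\tau(x')\|$ small, or the fiber orbits drift apart. Bounded distortion along $\mathcal W$ gives neither this nor orbit closeness. Nor can you simply run DKRH for $F$. Its tools, such as leaf-volume comparisons and absolutely continuous fake cs-holonomies, assume absolutely continuous unstable conditionals. The measure $\mu\times m$ lacks these in the base directions when $\mu$ is a non-SRB Gibbs measure, which is exactly why the earlier criteria do not apply here.

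\textbf{How the paper avoids this.} It never uses base plaques.
\begin{itemize}
\item It matches $x$ with $x'=\theta_C(x)$ in the same atom of $\mathcal R_{n,q}=\bigvee_{j=-q}^{n+q}f^{-j}\mathcal P$. Hence $d_X(f^jx,f^jx')<r_b$ for $0\le j\le n$, and Lemma~\ref{Bowen ball-sum} bounds the clock discrepancy by $C_\tau r_b$.
\item This matching is justified by Lemma~\ref{lem:past-atom-base-matching}: on average, the base marginal $\lambda_C$ of an $F$-past atom is close to $\mu$ on $\mathcal R_{n,q}$. This is a genuine decorrelation statement, because the fiber part of $C$ depends on the base through $S_{-i}\tau$. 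The proof freezes the base future, compensates with the $m$-preserving shift $\rho_{-b_M(\xi)}$, and uses Bowen's mixing estimate for Gibbs measures.
\item Only vertical plaques $\{x\}\times\widehat W^u$ are equidistributed. This uses Maldonado's estimates at the times $S_{m_n}\tau(x)$, uniformly for $x\in X_{\epsilon_0}$.
\end{itemize}

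\textbf{Smaller errors to fix.}
\begin{itemize}
\item $W^u_f(x)\times\mathcal V(y)$ is not contained in a past atom. The difference $S_{-i}\tau(x')-S_{-i}\tau(x)$ tends to the generally nonzero vector $\beta^u(x',x)$, so the correct leaves are the twisted graphs $\widetilde W^u$.
\item $F$ is not expansive, since $d(F^n(x,\rho_{\mathbf s}y),F^n(x,y))\le C_\rho\|\mathbf s\|$ for all $n$. Your argument for a generating partition therefore fails; use Theorem~\ref{DKRH24-Theorem 2.18} with refining partitions instead.
\item No lattice-coset obstruction arises. The fiber mixes in continuous time, so only $\|S_n\tau\|\to\infty$ is needed.
\end{itemize}
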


The next theorem is a consequence of results from \cite{DDKN22-1} 
for $\mathbb{R}^k$-actions over hyperbolic systems, 
focusing on the rate of mixing of skew products. For the sake of completeness, we will give a proof in \S \ref{sect:main2}.

\begin{theorem}\label{main theorem-mixing}
Let $f\colon X\to X$ be a topologically mixing $C^{1+\beta}$ Anosov diffeomorphism on a closed manifold $X$, 
and let $\mu$ be a Gibbs measure for $f$. 
Let $\rho_{\bf t}$ be a $C^{1+\beta}$ locally free $\mathbb{R}^k$-action on a closed manifold $Y$ preserving a Borel probability measure $m$, 
and let $\tau: X \to \mathbb{R}^k$ be a $C^{1+\beta}$ cocycle satisfying $\mu(\tau)\neq {\bf 0}$. 
Define the $C^{1+\beta}$ skew product $F\colon X\times Y\to X\times Y$ by
$$
F(x,y):=\left(f(x),\rho_{\tau(x)}(y)\right).
$$
If the action $\{\rho_{\bf t}\}$ is exponentially mixing of all orders with respect to $m$, 
then $(X \times Y, F, \mu\times m)$ is 
exponentially mixing of all orders, and satisfies the Central Limit Theorem. 

Similarly, if $\{\rho_{\bf t}\}$ is merely exponentially mixing with respect to $m$, 
then $(X \times Y, F, \mu\times m)$ is exponentially mixing. 
\end{theorem}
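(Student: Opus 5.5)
The plan is to reduce everything to the case of one base observable and one fiber observable, using the Gibbs property of $\mu$ together with a local limit/large deviation theory for the Birkhoff sums $\tau_n(x)=\sum_{j=0}^{n-1}\tau(f^jx)$. Since $F^n(x,y)=(f^nx,\rho_{\tau_n(x)}y)$, for test functions $A_1(x)B_1(y)$ and $A_2(x)B_2(y)$ we have
\begin{align*}
\int A_1B_1\cdot (A_2B_2)\circ F^n\,\rd(\mu\times m)=\int_X A_1(x)A_2(f^nx)\,\Phi(\tau_n(x))\,\rd\mu(x),
\end{align*}
where $\Phi(\mathbf t):=\int B_1\,B_2\circ\rho_{\mathbf t}\,\rd m$. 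Exponential mixing of the fiber gives $|\Phi(\mathbf t)-m(B_1)m(B_2)|\le Ce^{-\delta_0\|\mathbf t\|}\|B_1\|_{C^r}\|B_2\|_{C^r}$. This splits the correlation into two pieces: a main term $m(B_1)m(B_2)\int A_1\,A_2\circ f^n\,\rd\mu$, which decays exponentially by mixing of the Gibbs measure for the Anosov base; and an error term bounded by $C\int e^{-\delta_0\|\tau_n(x)\|}\,\rd\mu$.

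The error term is controlled using $\mu(\tau)\neq\mathbf 0$. The large deviation principle for Hölder observables of Gibbs measures on topologically mixing Anosov systems gives
\begin{align*}
\mu\bigl(\|\tau_n-n\mu(\tau)\|>\tfrac12 n\|\mu(\tau)\|\bigr)\le Ce^{-cn}.
\end{align*}
Off this set we have $\|\tau_n\|\ge \tfrac12 n\|\mu(\tau)\|$, so the integrand is at most $e^{-\delta_0 n\|\mu(\tau)\|/2}$. Hence the error is $O(e^{-c'n})$, and this yields exponential mixing for products of $C^r$ functions in $x$ and $y$.

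To pass to general $C^r$ observables on $X\times Y$, I would approximate them by finite sums of products, for instance via mollification and Fourier/partition-of-unity expansion in the $x$-variable on a scale $e^{-\epsilon n}$. Alternatively, I could work directly with $G(x,y)$, conditioning on the base: write $\int G_1(x,y)\,G_2(f^nx,\rho_{\tau_n x}y)\,\rd m(y)$. Fiber mixing applies pointwise in $x$ with constants uniform in the $C^r$ norms of $G_i(x,\cdot)$, giving
\begin{align*}
\int G_1(x,\cdot)\,\rd m\cdot\int G_2(f^nx,\cdot)\,\rd m+O\bigl(e^{-\delta_0\|\tau_n(x)\|}\bigr).
\end{align*}
The remaining term is a base correlation of the Hölder functions $\bar G_1(x)=\int G_1(x,\cdot)\,\rd m$ and $\bar G_2=\int G_2(\cdot,y)\,\rd m$, which decays exponentially. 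This direct route avoids approximation entirely, so I will use it for the two-point statement.

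For multiple mixing of order $s$, I apply the same conditioning. For times $n_1<\dots<n_s$, the fiber integral is $\int\prod_j G_j(f^{n_j}x,\rho_{\tau_{n_j}(x)}y)\,\rd m$. Order-$s$ fiber mixing replaces it by $\prod_j\bar G_j(f^{n_j}x)$, with an error $e^{-\delta\min_{i\neq j}\|\tau_{n_i}(x)-\tau_{n_j}(x)\|}$. Since $\tau_{n_j}-\tau_{n_i}=\tau_{n_j-n_i}\circ f^{n_i}$, the same large deviation bound applies by invariance, and a union bound over the pairs $(i,j)$ gives an error $O(e^{-c\,\min|n_i-n_j|})$. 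Multiple exponential mixing of Gibbs measures for Anosov maps, for Hölder functions, then finishes the argument.

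The CLT follows from exponential mixing of all orders via the cumulant method, in the style of Björklund–Gorodnik. The step I expect to be the main obstacle is the uniformity of the $C^r$ norms in $x$ combined with the Hölder regularity of $\bar G$, together with ensuring that the large deviation estimate is genuinely exponential. If a direct large deviation argument causes trouble, I would instead use the spectral gap of the transfer operator twisted by $e^{i\langle\xi,\tau\rangle}$, which is analytic perturbation theory for Gibbs measures.
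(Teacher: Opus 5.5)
Your proposal is correct, but it splits the work the opposite way from the paper.

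The paper never writes down the conditioning computation. It takes exponential mixing of all orders of the Gibbs measure from Kotani--Sunada, and then applies \cite[Theorems 4.1(a) and 4.15]{DDKN22-1} and \cite[Corollary 5.2]{DDKN22-1} as black boxes. So the only thing it actually proves is that $\tau$ satisfies exponential large deviation bounds. It does this by a case split. If $\tau$ is cohomologous to a constant, the deviation set is eventually empty. Otherwise it uses Kifer's large deviation upper bound for empirical measures, with rate function $I(\nu)=P(\varphi)-\int\varphi\,\rd\nu-h_\nu(f)$ on invariant measures. Lower semicontinuity of $I$ and uniqueness of the equilibrium state then give $\inf I>0$ on the closed set $\{\nu:\|\nu(\tau)-\mu(\tau)\|\geq\epsilon\}$.

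You instead prove the skew-product statement directly: fiberwise mixing at the random time $\tau_n(x)$, base mixing of the averaged observables $\bar G_i$, and an error $\int e^{-\delta_0\|\tau_n\|}\,\rd\mu$. For order $s$ you add a union bound over pairs, using $\tau_{n_j}-\tau_{n_i}=\tau_{n_j-n_i}\circ f^{n_i}$ and invariance of $\mu$. This is exactly the content the paper outsources to \cite{DDKN22-1}. Your version is self-contained, yields explicit rates, and shows precisely where $\mu(\tau)\neq\mathbf 0$ enters. The steps you flag as obstacles are in fact immediate: $\|G_i(x,\cdot)\|_{C^r}\le\|G_i\|_{C^r}$ uniformly in $x$, and $\bar G_i$ inherits the $C^r$ regularity of $G_i$.

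What you leave as ``standard'' is precisely the step the paper spends its effort on, so you should give a source or a sketch. One option is the Kifer argument above. The other is exponential Chebyshev combined with the Gibbs property, which gives $\frac1n\log\int e^{\lambda\langle v,\tau_n\rangle}\,\rd\mu\to P(\varphi+\lambda\langle v,\tau\rangle)-P(\varphi)$. This pressure is differentiable at $\lambda=0$ with derivative $\langle v,\mu(\tau)\rangle$. For this purpose the relevant twists are the real ones $e^{\lambda\langle v,\tau\rangle}$, not the unitary ones $e^{i\langle\xi,\tau\rangle}$ you mention. Neither route actually needs the paper's case distinction on whether $\tau$ is cohomologous to a constant.

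Your CLT step via the Bj\"orklund--Gorodnik cumulant method plays the same role as the paper's appeal to \cite[Corollary 5.2]{DDKN22-1}. That corollary likewise deduces the CLT from exponential mixing of all orders.
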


To apply the above results, recall the following standard Weyl chamber action from \cite[Example 8]{DDKN22-2}. 

\begin{example}\label{DDkN22-2-Example 8}
Let $k \geq 1$ and let $\Gamma$ be a cocompact lattice in $\mathrm{SL}(k+1,\mathbb{R})$. 
Let $D_+$ be the group of positive diagonal matrices in $\mathrm{SL}(k+1,\mathbb{R})$. 
Clearly, $D_+$ is isomorphic to $\mathbb{R}^k$. 
The group $D_+$ acts on $\mathrm{SL}(k+1,\mathbb{R})/\Gamma$ by left translation. 
\begin{itemize}
  \item For $k=1$, this one parameter flow $\{\rho_{\bf t}\}$ is the \textbf{geodesic flow}. 
  \item For $k\geq 2$, we obtain a $\mathbb{R}^k$-action $\{\rho_{\bf t}\}$, 
known as the \textbf{Weyl chamber flow}. 
\end{itemize}
Moreover, the action $\{\rho_{\bf t}\}$ is exponentially mixing of all orders (see \cite{BEG20}). 
\end{example}

Theorem \ref{main theorem-Bernoulli} and Theorem \ref{main theorem-mixing} directly yield the following corollary.

\begin{corollary}\label{main SL}
Let $f\colon X\to X$ be a topologically mixing $C^{1+\beta}$ Anosov diffeomorphism on a closed manifold $X$, 
and let $\mu$ be a Gibbs measure for $f$. 
Let $\{\rho_{\bf t}\}$ denote the geodesic flow or a Weyl chamber flow on $Y=\operatorname{SL}(k+1,\mathbb{R})/\Gamma$, 
where $\Gamma\subset\operatorname{SL}(k+1,\mathbb{R})$ is a cocompact lattice, 
and let $m_{\operatorname{Haar}}$ be the Haar measure on $Y$.
Assume that the $C^{1+\beta}$ cocycle $\tau\colon X\to\mathbb{R}^k$ satisfies $\mu(\tau)\neq {\bf 0}$. 
Define the $C^{1+\beta}$ skew product $F\colon X\times Y\to X\times Y$ by
\[
F(x,y):=\left(f(x),\rho_{\tau(x)}(y)\right).
\]
Then the system $(X\times Y,F,\mu\times m_{\operatorname{Haar}})$ is Bernoulli, 
is exponentially mixing of all orders, and satisfies the Central Limit Theorem. 
\end{corollary}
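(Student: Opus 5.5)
The plan is to deduce the corollary from Theorems \ref{main theorem-Bernoulli} and \ref{main theorem-mixing}. So the work is to check their hypotheses for $Y=\operatorname{SL}(k+1,\mathbb{R})/\Gamma$, $\{\rho_{\bf t}\}$ the $D_+$-action, and $m=m_{\operatorname{Haar}}$. There are three things to check.

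\emph{Smoothness and local freeness.} The action $\rho_{\bf t}(g\Gamma)=\exp(\mathbf t\cdot H)g\Gamma$, with $H$ ranging over the traceless diagonal Cartan subalgebra, is real analytic, hence $C^{1+\beta}$. It is locally free because $\Gamma$ is cocompact, hence torsion-free up to finite index and without unipotents. The stabilizer of $g\Gamma$ in $D_+$ is $D_+\cap g\Gamma g^{-1}$, which is discrete in $D_+$.

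\emph{Mixing hypotheses.} Exponential mixing of all orders for $m_{\operatorname{Haar}}$ is quoted in Example \ref{DDkN22-2-Example 8} from \cite{BEG20}. Theorem \ref{main theorem-mixing} then immediately gives exponential mixing of all orders and the CLT for $(X\times Y,F,\mu\times m_{\operatorname{Haar}})$.

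For Theorem \ref{main theorem-Bernoulli}, we use its first alternative. The Haar measure is smooth (a multiple of Riemannian volume for a left-invariant metric descended to $Y$). It is invariant, and for every ${\bf t}\neq\mathbf 0$ its conditional measures on unstable manifolds of $\rho_{\bf t}$ (orbits of the expanding horospherical subgroup) are absolutely continuous. So it is an SRB measure for each $\rho_{\bf t}$, and the ${\bf t}=\mathbf 0$ case is vacuous or trivial. Exponential mixing for $m$ follows from exponential mixing of all orders with $s=2$.

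\emph{Ergodicity of $\mu\times m_{\operatorname{Haar}}$.} This is the only step needing an argument, and it is where I expect the main obstacle. My first choice is to get it from mixing: Theorem \ref{main theorem-mixing} already shows $F$ is exponentially mixing with respect to $\mu\times m$. Exponential mixing for Hölder (or $C^r$) observables implies mixing, by density of such functions in $L^2(\mu\times m)$ and a standard approximation argument. Mixing implies ergodicity.

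If one preferred not to route through Theorem \ref{main theorem-mixing}, the alternative is a direct argument. Take an $F$-invariant function and expand it in the fiber using mixing of $\rho$. The nonzero drift $\mu(\tau)\neq\mathbf 0$ and the CLT/large deviations for Birkhoff sums $\tau_n$ under the Gibbs measure $\mu$ give $\|\tau_n(x)\|\to\infty$ linearly for most $x$. Then the correlations $\int B_1(y)B_2(\rho_{\tau_n(x)}y)\,\rd m$ decay, which forces the invariant function to be constant.

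With ergodicity in hand, Theorem \ref{main theorem-Bernoulli} gives the Bernoulli property, completing the corollary.
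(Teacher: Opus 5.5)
Your proposal is correct and takes the same route as the paper, which obtains the corollary by applying Theorems \ref{main theorem-Bernoulli} and \ref{main theorem-mixing} to the $D_+$-action with $m=m_{\operatorname{Haar}}$; your derivation of the ergodicity of $\mu\times m_{\operatorname{Haar}}$ from the mixing conclusion of Theorem \ref{main theorem-mixing} makes explicit a hypothesis the paper leaves unstated. One small correction: local freeness follows from the discreteness of $g\Gamma g^{-1}$ alone, so the appeal to cocompactness, torsion and unipotents is not needed for that step.
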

We remark that the above result applies to the measures of maximal entropy for the same skew product system considered here. The proof of the MMEs are of product type will be provided in \cite{DQ-1}. 

In sharp contrast, for the smooth $f$-invariant measure $\mu$ satisfying $\mu(\tau)={\bf 0}$, 
the invariant measure $\mu\times m_{\operatorname{Haar}}$ 
exhibits drastically different statistical behavior, as described below.

\begin{theorem}[\cite{DDKN22-2} Theorem 1.5, Theorem 2.7, Theorem 9.1]\label{DDKN22-2-Theorem 1.5}
Fix an integer $k \geq 1$. 
Let $f :\mathbb{T}^m \to \mathbb{T}^m$ be an 
Anosov diffeomorphism preserving a smooth measure $\mu$. 
For $k=1$, let $\{\rho_{\bf t}\}$ denote the geodesic flow on $\mathrm{SL}(2,\mathbb{R})/\Gamma$, 
and for $k\geq 2$, let $\{\rho_{\bf t}\}$ denote the Weyl chamber flow on 
$\mathrm{SL}(k+1,\mathbb{R})/\Gamma$. 
Let $\tau : \mathbb{T}^m \to \mathbb{R}^k$ 
be a H\"older cocycle satisfying $\mu(\tau)={\bf 0}$ and such 
that $\tau$ is not cohomologous to a cocycle taking value 
in a proper linear subspace of $\mathbb{R}^k$. 
We define the skew product $F_k$ on $\mathbb{T}^m \times \mathrm{SL}(k+1,\mathbb{R})/\Gamma$ by
\[
F_k(x,y) = \big(f(x),\, \rho_{\tau(x)} (y)\big)
\]
Let $m_{\operatorname{Haar}}$ denote the Haar measure 
on the homogeneous space $\mathrm{SL}(k+1,\mathbb{R})/\Gamma$. 
Then the system $(\mathbb{T}^m \times \mathrm{SL}(k+1,\mathbb{R})/\Gamma, 
F_k, \mu \times m_{\operatorname{Haar}})$ is polynomially mixing but not Bernoulli. 
Moreover, this system has the K-property and 
satisfies the Central Limit Theorem. 
\end{theorem}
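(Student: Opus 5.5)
Since this is a cited result, the plan is to reconstruct the argument of \cite{DDKN22-2}. Throughout, write $\tau_n(x)=\sum_{j=0}^{n-1}\tau(f^jx)$, so that $F^n(x,y)=(f^nx,\rho_{\tau_n(x)}y)$.

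\textbf{Step 1: reduction to a local limit theorem.} For observables $A(x)B_1(y)$ and $C(x)B_2(y)$, integrating over the fiber first gives
\[
\int A\,C\circ f^n\, \Phi(\tau_n(x))\,\rd\mu(x), \qquad \Phi(\mathbf t)=\int B_1\,B_2\circ\rho_{\mathbf t}\,\rd m_{\operatorname{Haar}} .
\]
By Example \ref{DDkN22-2-Example 8}, $\Phi(\mathbf t)-m(B_1)m(B_2)$ decays like $e^{-\delta\|\mathbf t\|}$. Since $\mu(\tau)=\mathbf 0$ and $\tau$ is not cohomologous to a cocycle with values in a proper subspace, the twisted transfer operators $\mathcal L_{\xi}g=\mathcal L(e^{i\langle\xi,\tau\rangle}g)$ on a Markov coding have a nondegenerate quadratic leading eigenvalue near $\xi=0$. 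If in addition $\tau$ is non-arithmetic (this must be checked or reduced to), one gets a mixing local limit theorem: $\mu(A\cdot C\circ f^n;\ \tau_n\in \mathbf t+[0,1]^k)\approx n^{-k/2}\mathfrak g(\mathbf t/\sqrt n)\,\mu(A)\mu(C)$.

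\textbf{Step 2: mixing rate and the CLT.} Summing the local limit theorem against the exponentially decaying $\Phi-m(B_1)m(B_2)$ gives correlations of exact order $n^{-k/2}$. The upper bound gives polynomial mixing; the matching lower bound shows the mixing is not faster. General smooth observables are handled by Fourier/Sobolev approximation in $x$. For the CLT, one writes the Birkhoff sum of a zero-mean observable $H$ as a sum over the base chain of fiber averages $\bar H(x,\tau_n(x))$. Since $\rho$ is exponentially mixing, only a local part of $H$ along the walk contributes, and one applies a martingale/Gordin decomposition with respect to the base filtration. The variance is computed from the local limit theorem; for $k\le 2$, where $n^{-k/2}$ is not summable, the normalization must be adjusted or the variance shown finite via cancellation. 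I would treat this part carefully but expect it to be routine given Step 1.

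\textbf{Step 3: K-property.} I would show that the Pinsker $\sigma$-algebra is measurable with respect to the stable foliation and, separately, the unstable foliation of $F$. These foliations combine the base stable/unstable leaves (with holonomies transporting $\tau$) and the fiber horospherical directions $\rho$. Accessibility along these leaves, together with the fact that the cocycle values generate $\mathbb R^k$, yields that the Pinsker algebra is trivial by a Hopf-type argument.

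\textbf{Step 4: non-Bernoullicity (main obstacle).} I would follow Kalikow's $T,T^{-1}$ scheme and show the system is not very weak Bernoulli. Fix a generating partition $\mathcal P=\mathcal P_X\times\mathcal P_Y$ and consider $\mathcal P$-names of length $n$. The key claim is that the fiber part of an $n$-name, using positive entropy of $\rho$ in every direction of the Weyl chamber, determines the path $j\mapsto\tau_j(x)$ up to error $o(\sqrt n)$. A scenery-reconstruction argument would give this: distinct fiber locations produce asymptotically independent names by exponential mixing. Two points whose base pasts differ have endpoints $\tau_n$ separated by order $\sqrt n$ with positive probability, because of the nondegenerate Gaussian limit. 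Their future names then cannot be $\bar d$-matched within $\varepsilon$, so the conditional future distributions are not $\bar d$-close, contradicting very weak Bernoulli. The hardest point is making the reconstruction quantitative in $\bar d$: a matching may shift time, so one must show that agreement on a $(1-\varepsilon)$-fraction of times still forces the walks to be close. I would try Kalikow's block-counting argument, using the entropy of $\rho$ along the walk as the source of distinguishability.
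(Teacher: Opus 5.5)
The paper does not prove this statement; it is quoted from \cite{DDKN22-2}, so your reconstruction has to stand on its own. Steps~1--2 are fine for the polynomial upper bound. You only need $\mu(\tau_n\in\mathbf t+[0,1]^k)=O(n^{-k/2})$, which follows from nondegeneracy of the covariance alone, so the arithmeticity issue you flag is harmless.

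\textbf{Main gap: Step~4.} Your mechanism is that different pasts give endpoints $\tau_n$ differing by order $\sqrt n$, so the futures cannot be $\bar d$-matched. This never uses $\mu(\tau)=\mathbf 0$. With nonzero drift, $\tau_n-n\mu(\tau)$ has the same nondegenerate $\sqrt n$ fluctuations and $\rho$ has the same entropy in every direction, yet Corollary~\ref{main SL} says that system is Bernoulli. The proof of Theorem~\ref{main theorem-Bernoulli} shows why: a displacement of fiber positions is no obstruction, because a $\bar d$-matching may choose the partner's fiber coordinate freely. Nor is reconstructing $j\mapsto\tau_j(x)$ the issue, since the base part of the name already determines it up to $O(1)$ (cf.\ Lemma~\ref{Bowen ball-sum}).

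What your Kalikow-type argument lacks is the actual zero-drift obstruction. For $N'\gg S\gg N$, an atom of $\bigvee_{i=N}^{N'}F^i(\mathcal P_X\times\mathcal P_Y)$ pins down the fiber scenery $\mathbf t\mapsto\mathcal P_Y(\rho_{\mathbf t}y)$ on the part of $\mathbb R^k$ explored by the past. A zero-drift future of length $S$ stays inside that part, so the conditional future is essentially a walk on a scenery frozen by the atom. One must then show that such conditional processes are $\bar d$-far from the unconditional future. For $k\le 2$ the overlap between past and future comes from recurrence. For $k\ge3$ the walk is transient: with an i.i.d.\ scenery on $\mathbb Z^k$ the future would spend only an $o(1)$ fraction of its time on sites seen by the past, and this route collapses. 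So you need an input specific to the homogeneous fiber, e.g.\ that the past names determine $y$ to exponential precision in every root direction, which freezes the scenery on a whole neighbourhood of the past trajectory rather than only along it. Your proposed source of distinguishability, the entropy of $\rho$ along the walk, is equally present in the Bernoulli drift regime.

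\textbf{Step~3: the foliations are misidentified.} By Lemma~\ref{all LY}, with $\mu(\tau)=\mathbf 0$ all fiber exponents of $F$ vanish. So the stable and unstable foliations of $F$ are just the lifts $\widetilde W^s,\widetilde W^u$ of base leaves, and the horospherical directions of $\rho$ are central. Hence $F$ is not accessible: an $su$-cycle returns to its base point and moves the fiber point only along its $\rho$-orbit, by a periodic cycle functional of $\tau$, while $\dim Y=k^2+2k>k$. The Hopf argument therefore only yields invariance of Pinsker-measurable functions under $\rho_{\mathbf h}$ for $\mathbf h$ in the group generated by these functionals. You must add three things: this group is nontrivial (which follows from the hypothesis on $\tau$); each $\rho_{\mathbf h}$ with $\mathbf h\neq\mathbf 0$ is ergodic on $Y$ (Howe--Moore); and $f$ is K. ``The cocycle values generate $\mathbb R^k$'' is not the relevant condition, since a coboundary can have such values while $F$ is conjugate to $f\times\mathrm{id}$.

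\textbf{Step~2: the case $k\le2$ is not routine.} Take $B_1=B_2=B$ with $m(B)=0$. Then $\int_{\mathbb R^k}\Phi(\mathbf t)\,\mathrm d\mathbf t$ is the asymptotic variance of $B$ under the fiber action, which is nonnegative and generically positive. Nothing cancels, and your own local limit computation gives $\operatorname{Var}(S_nB)\asymp n^{3/2}$ for $k=1$ and $\asymp n\log n$ for $k=2$. For $k=1$ this is the Kesten--Spitzer regime (scaling $n^{3/4}$, non-Gaussian limit), and for $k=2$ the normalization is $\sqrt{n\log n}$. So with the classical $\sqrt n$ normalization the CLT clause is only available for $k\ge3$, where $n^{-k/2}$ is summable.
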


\begin{remark}[Statistical Phase Transition]\label{remark:Statistical Phase Transition}
The contrast between Corollary \ref{main SL} 
and Theorem \ref{DDKN22-2-Theorem 1.5} reveals a 
statistical phase transition in the mixing rates of $F$. 
While any ergodic measures of maximal entropy  $\mu\times m_{\operatorname{Haar}}$ 
are pushed into the drift regime ($\mu(\tau) \neq {\bf 0}$) 
with exponentially mixing of all orders, 
the system undergoes a sharp transition to polynomially mixing 
at the critical threshold $\mu(\tau) = {\bf 0}$, 
as observed for specific smooth invariant measures in \cite{DDKN22-2}. 
Crucially, this bifurcation is also mirrored in the ergodic-theoretic structure 
of the system: the former system is Bernoulli, whereas the latter is not. 
This implies that the drift plays an important role in the statistical
behavior of the extension.
\end{remark}

\begin{remark}
In fact, all results in this paper remain valid 
when the base map $f$ is an Axiom A diffeomorphism, 
as the required properties continue to hold in this setting. 
For further details on Axiom A diffeomorphisms, 
we refer the reader to \cite{Bow75, PP90}. 
\end{remark}

\subsection{Structure of the paper}

This paper is organized as follows.
In \S \ref{sect: Preliminaries}, we recall the necessary definitions and notation.
In \S \ref{sect:foliations}, we study the invariant foliations and 
Lyapunov exponents of the skew product.
In \S \ref{subsect:Fiberwise constructions and matching}, 
we develop the fiberwise constructions and matching lemmas.
Theorems \ref{main theorem-Bernoulli} and \ref{main theorem-mixing} are proved in
\S \ref{subsect:Proof of Theorem main theorem-Bernoulli} and
\S \ref{sect:main2}, respectively.

\subsection{Acknowledgement}  
This research was supported by National Key R\&D Program of China No. 2024YFA1015100. C. Dong was also grateful for the support by Nankai Zhide Foundation, the Fundamental Research Funds for the Central Universities, Tianjin Natural Science Foundation Outstanding Youth Project 25JCJQJC00310, NSFC 12526207.
Q. Qiao was supported by Nankai Zhide Foundation. 

\section{Preliminaries}\label{sect: Preliminaries}

    In this section, we collect the measure-theoretic definitions 
    and very weak Bernoulli criteria required for 
    the proof of Theorem \ref{main theorem-Bernoulli}.

Recall that a {\bf Lebesgue space} is a Polish space endowed with a Borel probability measure. 
The following well-known result provides a classification of such spaces. 

\begin{lemma}[\cite{Ro52}]\label{DKRH24-Lemma 2.15}
Any two atomless Lebesgue spaces are isomorphic.
\end{lemma}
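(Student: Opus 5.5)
The plan is to show that every atomless Lebesgue space $(Z,\mathcal B,\nu)$ is isomorphic mod $0$ to $([0,1],\mathrm{Leb})$. The lemma then follows by composing one such isomorphism with the inverse of another. Here "isomorphic" means there are full-measure Borel sets $Z_0\subset Z$ and $I_0\subset[0,1]$ and a Borel bijection $\Phi\colon Z_0\to I_0$ with Borel inverse such that $\Phi_*\nu=\mathrm{Leb}$. The argument has two stages: first code $Z$ into the Cantor space $\{0,1\}^{\mathbb N}$, then pass from the Cantor space to $[0,1]$ via a distribution function.

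\textbf{Step 1 (coding into Cantor space).} Since $Z$ is Polish, it has a countable base $\{U_n\}$, which separates points. Define $\pi\colon Z\to\{0,1\}^{\mathbb N}$ by $\pi(z)_n=\mathbf 1_{U_n}(z)$.
\begin{itemize}
\item $\pi$ is Borel and injective.
\item By the Lusin--Souslin theorem, $\pi(Z)$ is Borel and $\pi$ is a Borel isomorphism onto its image.
\end{itemize}
Set $\lambda=\pi_*\nu$. This is a Borel probability measure on $\{0,1\}^{\mathbb N}$. It is atomless because $\nu$ is atomless and $\pi$ is injective.

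\textbf{Step 2 (distribution function).} Order $\{0,1\}^{\mathbb N}$ lexicographically and define $G(\omega)=\lambda(\{\eta:\eta\le\omega\})$. The main obstacle is showing that $G$ is injective off a null set with Borel inverse, so that it gives a genuine mod-$0$ isomorphism rather than just a factor map. The properties needed are these.
\begin{itemize}
\item $G$ is monotone.
\item $G$ is continuous, because $\lambda$ has no atoms.
\item $G_*\lambda=\mathrm{Leb}$. This is the usual argument: $\lambda(G\le s)=s$ for all $s\in[0,1]$, using continuity and monotonicity.
\end{itemize}
Injectivity fails in exactly two ways.
\begin{itemize}
\item Points of $\{0,1\}^{\mathbb N}$ lying in a maximal interval on which $G$ is constant. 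There are at most countably many such intervals, each a lexicographic interval carrying $\lambda$-measure zero.
\item Pairs of lexicographically adjacent sequences, of the form $w01^\infty$ and $w10^\infty$. There are only countably many such pairs.
\end{itemize}
Let $N$ be the union of these countably many null Borel sets, and let $I_0=[0,1]\setminus G(N)$, a set of full Lebesgue measure. Then $G$ restricted to the complement of $N$ is an injective continuous map onto $I_0$, and its inverse is Borel, again by Lusin--Souslin.

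\textbf{Step 3 (assembly).} Compose $\Phi=G\circ\pi$ and restrict it to $Z_0=\pi^{-1}\bigl(\{0,1\}^{\mathbb N}\setminus N\bigr)\cap\Phi^{-1}(I_0)$, adjusting by null sets so that $\Phi$ maps $Z_0$ onto a full-measure subset of $[0,1]$. Since each of $\pi$ and $G$ is measure-preserving and Borel-bimeasurable off a null set, $\Phi$ is a measure-preserving Borel isomorphism mod $0$. Applying this to two atomless Lebesgue spaces and composing one resulting map with the inverse of the other proves the lemma.
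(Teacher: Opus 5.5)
Your proof is correct. The paper gives no argument for this lemma; it simply quotes it from Rohlin \cite{Ro52}. Rohlin's classical proof works in an abstract measure space with a complete countable basis: it codes each point by its membership in the basis sets, then carries the coded measure onto an interval by a distribution function.

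Your proof follows the same two-stage scheme. Because the paper defines a Lebesgue space as a Polish space with a Borel probability measure, you can let the Lusin--Souslin theorem (Borel image, Borel inverse) replace Rohlin's completeness axiom for the basis. This makes the argument self-contained for the definition actually in force. Rohlin's formulation, in exchange, also covers completed measures and spaces with atoms, which the paper does not need.

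The one claim you leave unjustified is that $G$ has only countably many nondegenerate constancy intervals. Each such interval is a closed lexicographic interval that is either an adjacent pair $\{w01^\infty,w10^\infty\}$ or contains a cylinder set. Distinct intervals are disjoint, so countability follows from the countability of cylinders.

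Finally, your argument only uses that $Z$ is a Borel subset of a Polish space. It therefore applies equally to the Borel subsets with normalized restricted measures to which the paper later applies the lemma.
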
 

Let $(X, \mathcal{B}, \mu)$ be a Borel probability space, 
and let $\mathcal{P} = \{P_1, \dots, P_k\}$ and 
$\mathcal{Q} = \{Q_1, \dots, Q_l\}$ be two finite measurable partitions of $X$. 
The joining partition of $\mathcal{P}$ and $\mathcal{Q}$ is defined by  
$$ 
\mathcal{P} \vee \mathcal{Q} := \{ P_i \cap Q_j : 1 \leq i \leq k, 1 \leq j \leq l \}. 
$$
Let $T: X \to X$ be an automorphism of $(X, \mathcal{B}, \mu)$. 
We say that $\mathcal{P}$ is a {\bf generating partition} 
if $\displaystyle \bigvee_{-\infty}^{\infty}  T^i\mathcal{P}= \mathcal{B}$ 
(up to sets of measure zero). 
Given $\epsilon>0$, a property holds for {\bf $\epsilon$ a.e. 
atom of a partition} $\mathcal{Q}$ if the union
of all atoms that fail to satisfy the property has measure $\leq \epsilon$.

For $A \in \mathcal{B}$, let $\mathcal{P}|_{A}$ denote the induced partition of the space 
$(A, \mu_{|A})$, defined by 
$$
 \mathcal{P}_{|A} := (P_1 \cap A, \dots, P_k \cap A) \quad \text{and} \quad \mu_{|A}(B) = \frac{\mu(A \cap B)}{\mu(A)}. 
 $$ 
On the space of partitions on $(X, \mu)$, 
we introduce the following distance 
$$
\bar{d}(\mathcal{P}, \mathcal{Q}) := \sum_{i=1}^k \mu(P_i \triangle Q_i).
$$
Let $(\mathcal{P}^{(s)})_{s=1}^S$ be a sequence of finite partitions of $(X, \mu)$, 
 where each $\mathcal{P}^{(s)} = (P_1^{(s)}, \dots, P_k^{(s)})$, 
and let $(\mathcal{Q}^{(s)})_{s=1}^S$ be a sequence of finite partitions of $(Y, \nu)$, 
 where each $\mathcal{Q}^{(s)} = (Q_1^{(s)}, \dots, Q_k^{(s)})$.  
If $(X, \mu) = (Y, \nu)$, the distance between these sequences is given by 
$$ 
\bar{d}\left( (\mathcal{P}^{(s)})_{s=1}^S, (\mathcal{Q}^{(s)})_{s=1}^S \right) 
:= \frac{1}{S} \sum_{s=1}^S \bar{d}(\mathcal{P}^{(s)}, \mathcal{Q}^{(s)}). 
$$
More generally, if the sequences are partitions of different spaces, 
we write $(\mathcal{P}^{(s)})_{s=1}^S \sim (\mathcal{Q}^{(s)})_{1}^S$ 
if $\mu(P_i^{(s)}) = \nu(Q_i^{(s)})$ for all $i$ and $s$. 
In this case, the distance is defined as
$$
 \bar{d}\left( (\mathcal{P}^{(s)})_{s=1}^S, (\mathcal{Q}^{(s)})_{s=1}^S \right) 
 = \inf \left\{  
 \bar{d}\left( (\mathcal{P}^{(s)})_{s=1}^S, (\bar{\mathcal{Q}}^{(s)})_{1}^S \right) : \bar{\mathcal{Q}}^{(s)} \sim \mathcal{Q}^{(s)},\, s=1\dots,S \right\},  
 $$
where the infimum is taken over all sequences of partitions $(\bar{\mathcal{Q}}^{(s)})_{s=1}^S$ of the space $(X, \mu)$.

\begin{definition}\label{def:vwB}
Let $T$ be an automorphism of $(X, \mathcal{B}, \mu)$, 
and let $\mathcal{P}$ be a finite partition of $X$. 
We say $\mathcal{P}$ is a {\bf very weak Bernoulli partition 
(vwB partition)} if for every $\epsilon > 0$, 
there exists $N_0 \in \mathbb{N}$ such that for every $N' \ge N \ge N_0$, 
every $S \ge N_0$, and $\epsilon$-a.e. atom $A$ of 
$\displaystyle \bigvee_{i=N}^{N'} T^i \mathcal{P}$, we have
$$ 
\bar{d}\left( (T^{-i}\mathcal{P})_{i=0}^S, (T^{-s}\mathcal{P}_{|A})_{s=0}^S \right) < \epsilon. 
$$ 
\end{definition}

\begin{definition}
An automorphism $T$ of the Borel probability space $(X, \mathcal{B}, \mu)$ 
is said to be {\bf Bernoulli} if it is measure-theoretically isomorphic 
to a Bernoulli shift, i.e., the shift map on 
$(\{1, \dots, k\}^{\mathbb{Z}}, \mathbf{p}^{\mathbb{Z}})$ for some 
$k \in \mathbb{N}$ and a probability vector $\mathbf{p} = (p_1, \dots, p_k)$. 
\end{definition} 

Let $(\mathcal{P}_k)_{k=1}^{+\infty}$ be a sequence of finite measurable 
partitions of $X$. We say that $\mathcal{P}_n$ {\bf converges to the partition 
into points} if, for every measurable set $A \subset X$ and $\epsilon > 0$, 
there exists an $N$ such that for all $n > N$, 
there is a $\mathcal{P}_n$-measurable set $A_n$ satisfying 
$\mu(A \triangle A_n) < \epsilon$. 
According to \cite[Theorems A and B]{OW73}, we have the following result:

\begin{theorem}\label{DKRH24-Theorem 2.18}
Let $(\mathcal{P}_k)_{k=1}^{+\infty}$ be a sequence of finite measurable 
partitions of $(X, \mathcal{B}, \mu)$ that converges to the partition into points. 
If every $\mathcal{P}_n$ is very weak Bernoulli with respect to 
the automorphism $T$, then $(X, \mu, T)$ is a Bernoulli system. 
\end{theorem}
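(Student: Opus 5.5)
The statement is the Ornstein--Weiss criterion \cite[Theorems A and B]{OW73}. I would prove it by reducing to Ornstein's isomorphism theory, working in the space of finite partitions with the $\bar d$-metric introduced above.

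\textbf{Step 1: each $\mathcal P_n$ generates a Bernoulli factor.} Fix $n$. Using Definition \ref{def:vwB}, I would show that the process $(T,\mathcal P_n)$ is \emph{finitely determined}. This is Ornstein's ``vwB $\Rightarrow$ FD'' argument. Take any process whose $S$-block distributions and entropy are close to those of $(T,\mathcal P_n)$. Build a good $\bar d$-matching of $S$-names block by block. Conditioning on an $\epsilon$-a.e.\ atom $A$ of the remote past $\bigvee_{i=N}^{N'}T^i\mathcal P_n$ changes the future $S$-names by at most $\epsilon$ in $\bar d$. This lets the successive block matchings be concatenated with total error of order $\epsilon$. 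Ornstein's theorem then says a finitely determined process is isomorphic to a Bernoulli shift. Hence the factor $\sigma$-algebra $\mathcal B_n:=\bigvee_{i\in\mathbb Z}T^i\mathcal P_n$ is a Bernoulli factor. Since factors of Bernoulli shifts are Bernoulli, every finite partition measurable with respect to $\mathcal B_n$ is again finitely determined.

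\textbf{Step 2: approximate a generator.} First assume $h_\mu(T)<\infty$. Krieger's theorem gives a finite generating partition $\mathcal Q=\{Q_1,\dots,Q_l\}$. Since $\mathcal P_n$ converges to the partition into points, for $n$ large each $Q_j$ is within $\epsilon/l$ in measure of a $\mathcal P_n$-measurable set. Adjusting these sets to be disjoint and cover $X$ gives a partition $\mathcal Q_n$, coarser than $\mathcal P_n$, with $\bar d(\mathcal Q,\mathcal Q_n)<2\epsilon$. By Step 1 each $\mathcal Q_n$ is finitely determined.

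\textbf{Step 3: closedness.} I would invoke Ornstein's result that finitely determined partitions form a $\bar d$-closed class. Concretely, let $\mathcal Q_n\to\mathcal Q$ with each $\mathcal Q_n$ FD. Then the $S$-block distributions and entropies of $(T,\mathcal Q)$ are approximated by those of $(T,\mathcal Q_n)$, uniformly in $S$ up to a $\bar d$-error of order $\bar d(\mathcal Q,\mathcal Q_n)$. The FD property then transfers by a triangle inequality in $\bar d$ for name sequences. Thus $\mathcal Q$ is a finitely determined generating partition, so $(X,\mu,T)$ is Bernoulli. If $h_\mu(T)=\infty$, I would instead use an increasing sequence of finite partitions generating $\mathcal B$. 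Each is FD by Steps 2 and 3, so $T$ is an increasing limit of Bernoulli factors. Ornstein's theorem on increasing unions of Bernoulli factors then gives the infinite-entropy Bernoulli conclusion.

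\textbf{Main obstacle.} I expect the main difficulty to be Steps 1 and 3, the genuinely Ornstein-theoretic core: vwB $\Rightarrow$ FD, and closedness of FD under $\bar d$-limits with control of entropy. Step 2 is routine measure approximation, with care needed because $\mathcal Q_n$ is only a coarsening of $\mathcal P_n$. For the paper's purposes I would simply cite \cite{OW73} and Ornstein's theory for these two steps rather than reprove them.
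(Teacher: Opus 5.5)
Your outline is correct and is essentially the paper's route. The paper does not prove this statement; it imports it from \cite[Theorems A and B]{OW73}, and your chain (vwB $\Rightarrow$ FD $\Rightarrow$ Bernoulli factor, approximation of a generator by $\mathcal P_n$-measurable partitions, closure of FD, increasing unions of Bernoulli factors) is exactly the Ornstein theory behind that citation.

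Two cautions if you write it out. First, the closure step is not a bare triangle inequality. FD for $\mathcal Q_n$ requires total-variation closeness of $S_n$-blocks with parameters $(\delta_n,S_n)$ that depend on $n$. You therefore must first copy $\mathcal Q_n$ into the competing process (e.g.\ by a finite code in the $\mathcal Q$-names, together with an entropy check) before comparing in $\bar d$. Second, Krieger's theorem needs $T$ ergodic; this follows because each $\bigvee_i T^i\mathcal P_n$ is a Bernoulli, hence ergodic, factor and $\mathcal P_n$ converges to the partition into points.
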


We further assume that $X$ is equipped with a metric $d$ 
that generates the Borel $\sigma$-algebra $\mathcal{B}$. 
For a finite partition $\mathcal{P} = \{P_1, \dots, P_k\}$ of $X$, 
let $\partial \mathcal{P} := \bigcup_{i=1}^k \partial P_i$ denote its boundary. 
We say that $\mathcal{P}$ is a {\bf regular partition} 
if the measure of its $\delta$-neighborhood vanishes as $\delta \to 0$; 
that is, for every $\epsilon > 0$, there exists $\delta > 0$ such that 
$\mu(\mathcal{N}_\delta(\partial \mathcal{P})) < \epsilon$, where 
$$
\mathcal{N}_\delta(\partial \mathcal{P})=\left\{y\in X: d(y,\partial \mathcal{P})<\delta\right\}.
$$

While the definition of a regular partition ensures that the boundary has zero measure, 
our proofs require a stronger partition called $D$-Nice Partition. 

\begin{definition}[\cite{Mal25} $D$-Nice Partition]
A {\bf nice partition} $\mathcal{Q}$ is a finite collection of 
disjoint open sets $\{Q_1, \dots, Q_N\}$ such that for each $i$, 
the boundary $\partial Q_i$ is a finite union of embedded closed disks 
and $\bigcup_i \overline{Q_i} = X$. 

We say that $\mathcal{Q}$ is a {\bf D-Nice partition} if it further satisfies 
the quantitative estimate:
$$ 
\mu(\mathcal{N}_t(\partial \mathcal{Q})) \leq D t 
$$
for some constant $D > 0$ and all sufficiently small $t > 0$. 
\end{definition}

\begin{definition}
Given two probability measure spaces $(X_1, \nu_1), (X_2, \nu_2)$ and $\epsilon>0$, 
a map $\theta : (X_1, \nu_1) \to (X_2, \nu_2)$ is said to be 
\textbf{$\epsilon$-measure preserving} if there exists a measurable set 
$E_1 \subset X_1$ with $\nu_1(E_1) < \epsilon$ 
such that for every measurable set $A \subset X_1 \setminus E_1$, we have
$$
\left| \frac{\nu_2(\theta(A))}{\nu_1(A)} - 1 \right| < \epsilon.
$$
\end{definition}

The following criterion, which originates from \cite[Lemma 2.4]{OW73}
and is formulated as \cite[Corollary 2.21]{DKRH24},
provides a sufficient condition for a partition to be very weak Bernoulli.

\begin{corollary}\label{DKRH24-Corollary 2.21}
  Assume that $X$ is a closed Riemannian manifold
and $\mu$ is nonatomic Borel probability measure. 
Let $T: (X, \mathcal{B}, \mu, d) \to (X, \mathcal{B}, \mu, d)$ 
be an ergodic measure preserving transformation and $\mathcal{P}$ 
a regular partition of $X$.
Suppose that for every $\epsilon > 0$
there exists $N \in \mathbb{N}$ such that
for every $N'\geq N$ and every $S\geq N$,
for $\epsilon$-almost every atom
$\displaystyle A\in\bigvee_{i=N}^{N'}T^i\mathcal P$
there exists an $\epsilon$-measure preserving map 
$\theta = \theta(N, S, A): (A, \mu_{|A}) \to (X, \mu)$ such that for 
$\epsilon$-almost every $x \in (A,\mu_{|A})$,
$$
\frac{1}{S} \operatorname{Card}\left( \left\{ i \in \{0, \dots, S-1\} : d(T^{i}x, T^{i}(\theta (x))) < \epsilon \right\} \right) > 1 - \epsilon, 
$$
then $\mathcal{P}$ is a vwB partition.
\end{corollary}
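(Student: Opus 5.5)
The strategy is to use the map $\theta$ to build an almost measure-preserving identification between $(A,\mu_{|A})$ and $(X,\mu)$. Under this identification the $\mathcal P$-names of $x$ and $\theta(x)$ along $0,\dots,S-1$ should agree at most times. The $\bar d$-distance in Definition \ref{def:vwB} is then bounded by the average frequency of disagreement, plus an error that accounts for $\theta$ not being exactly measure preserving. I use $\eta>0$ for the final target accuracy in Definition \ref{def:vwB}, and $\epsilon$ for the accuracy at which the hypothesis is invoked; these are linked at the end.

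First, fix $\eta>0$. By regularity of $\mathcal P$, choose $\delta>0$ with $\mu(\mathcal N_\delta(\partial\mathcal P))<\eta$. Let $G_S$ be the set of $y\in X$ for which
\[
\frac1S\operatorname{Card}\{0\le i<S: T^iy\in\mathcal N_\delta(\partial\mathcal P)\}<2\eta .
\]
By ergodicity of $T$ and the Birkhoff ergodic theorem (Egorov), there is $N_1$ with $\mu\big(\bigcap_{S\ge N_1}G_S\big)>1-\eta$.

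Next, apply the hypothesis with some $\epsilon<\min(\delta,\eta)$ and take $N\ge N_1$. For $\epsilon$-a.e. atom $A$ and every $S\ge N$ we get an $\epsilon$-measure preserving map $\theta$. Since $\theta$ distorts measures by a factor at most $1\pm\epsilon$ off a set of measure $<\epsilon$, the set of $x\in A$ with $\theta(x)\notin G_S$ has $\mu_{|A}$-measure $O(\eta+\epsilon)$. Hence for all $x\in A$ outside a set of measure $O(\eta)$, both of the following hold:
\begin{itemize}
\item $d(T^ix,T^i\theta(x))<\epsilon<\delta$ for a fraction $>1-\epsilon$ of the times $i<S$;
\item $T^i\theta(x)\notin\mathcal N_\delta(\partial\mathcal P)$ for a fraction $>1-2\eta$ of the times $i<S$.
\end{itemize}
At each time where both conditions hold, $T^ix$ and $T^i\theta(x)$ lie in the same atom of $\mathcal P$. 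So the $\mathcal P$-names of $x$ and $\theta(x)$ agree at a fraction $\ge 1-\epsilon-2\eta$ of the times $i<S$.

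Then I convert this into a $\bar d$ estimate. Since $\mu$ is nonatomic, $(A,\mu_{|A})$ and $(X,\mu)$ are atomless Lebesgue spaces, so Lemma \ref{DKRH24-Lemma 2.15} applies. Combining it with the $\epsilon$-measure preservation of $\theta$, I modify $\theta$ on a set of measure $O(\epsilon)$ to get a genuinely measure-preserving isomorphism $\theta'\colon(A,\mu_{|A})\to(X,\mu)$ that agrees with $\theta$ off that set. Concretely, I match the densities $\mathrm d(\theta_*\mu_{|A})/\mathrm d\mu$, which are within $\epsilon$ of $1$ on the good set, and use a standard cut-and-paste argument on the remaining small mass.

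The pulled-back partitions $\bar{\mathcal Q}^{(s)}:=\theta'^{-1}(T^{s}\mathcal P)$ on $A$ then have the same atom measures as the $T^s\mathcal P$ on $X$, after matching the $T^{-s}$ versus $T^{s}$ indexing conventions in Definition \ref{def:vwB}. Their $\bar d$-distance to $(T^{-s}\mathcal P)_{|A}$ is, by Fubini, at most $2\int_A$ of the disagreement frequency. This integral is $O(\epsilon+\eta)$, so choosing $\epsilon,\eta$ small relative to the target gives the required vwB estimate for $\eta$-a.e. atom. The same $N$ serves as $N_0$, uniformly in $N'\ge N$ and $S\ge N$.

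I expect the main obstacle to be the passage from an $\epsilon$-measure preserving $\theta$ to an honest measure-preserving $\theta'$ while keeping control of the name agreement. In particular, the set where $\theta$ is modified must have small $\mu_{|A}$-measure uniformly in the atom $A$. I would handle this by reusing the exceptional set $E_1$ from the definition, and by noting that the Radon–Nikodym derivative of $\theta_*\mu_{|A}$ is $\epsilon$-close to $1$ outside $\theta(E_1)$.
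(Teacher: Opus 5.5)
Your architecture matches the standard proof behind \cite[Corollary 2.21]{DKRH24}, which the paper only cites. It has three steps: regularity of $\mathcal P$ plus Birkhoff/Egorov to control how often the orbit of $\theta(x)$ visits $\mathcal N_\delta(\partial\mathcal P)$; then $d(T^ix,T^i\theta(x))<\epsilon<\delta$ forces equal $\mathcal P$-symbols (fine, since $d$ is a length metric); finally, converting ``$\epsilon$-measure preserving map plus name agreement'' into a $\bar d$ bound, which is \cite[Lemma 2.4]{OW73}. The first two steps are correct.

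\textbf{The gap is in your proof of the last step.} An $\epsilon$-measure preserving $\theta$ cannot in general be modified on a set of measure $O(\epsilon)$ into a measure preserving isomorphism. If a measure preserving bijection $\theta'$ agrees with $\theta$ on $D$, then $\theta_*(\mathbf 1_D\,\mu_{|A})=\mu|_{\theta(D)}$ exactly. By contrast, $\epsilon$-measure preservation only pins that density to within a factor $1\pm\epsilon$ of $1$. For example, on $A=X=[0,1]$ with Lebesgue measure, $\theta(x)=(1-\epsilon/2)x$ is $\epsilon$-measure preserving with $E_1=\varnothing$. Yet agreement on $D$ gives $\mu(D)=\mu(\theta'(D))=\mu(\theta(D))=(1-\epsilon/2)\mu(D)$, so $\mu(D)=0$. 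Matching Radon--Nikodym derivatives and cutting and pasting on a small set cannot fix a density that is off by a factor everywhere.

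\textbf{The repair is to ask for less.} $\bar d$ only sees names, so it suffices that $\theta'(x)$ lie in the same atom as $\theta(x)$ of the finite partition $\mathcal R=\bigvee_{s=0}^{S}T^{-s}\mathcal P$. For $R\in\mathcal R$, apply the definition to $\theta^{-1}R\setminus E_1$, whose image lies in $R$. This gives $\mu_{|A}(\theta^{-1}R\setminus E_1)\le \mu(R)/(1-\epsilon)$, hence
\[
\sum_{R\in\mathcal R}\bigl(\mu_{|A}(\theta^{-1}R)-\mu(R)\bigr)^+\le \mu_{|A}(E_1)+\frac{\epsilon}{1-\epsilon}<3\epsilon,
\]
independently of the number of atoms. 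Inside each $R$, match mass $\min\{\mu_{|A}(\theta^{-1}R),\mu(R)\}$ via Lemma \ref{DKRH24-Lemma 2.15} (all pieces are nonatomic), and match the leftovers arbitrarily. This is the same atomwise matching the paper uses to build $\theta_C$ in the proof of Theorem \ref{main theorem-Bernoulli}. The resulting isomorphism $\theta'$ satisfies $\theta'(x)\in\mathcal R(\theta(x))$ off a set of $\mu_{|A}$-measure $<3\epsilon$. Your name-agreement estimate then transfers verbatim from $\theta$ to $\theta'$, and your Fubini bound finishes. Equivalently, you can work with couplings: correcting the graph measure of $\theta$ to a genuine coupling costs $O(\epsilon)$.

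\textbf{A smaller point.} Definition \ref{def:vwB} as written ranges over all starting indices $N\ge N_0$, while the hypothesis concerns a single $N$. Atoms for larger starting indices are unions of the given ones, and the $\bar d$-distance from a mixture to a fixed process is at most the average of the distances. So Markov's inequality handles this, at the cost of replacing $\epsilon$ by $O(\sqrt\epsilon)$.
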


\section{Invariant foliations}\label{sect:foliations}

This section studies the stable and unstable foliations of $F$
whose projections onto $X$ coincide with the corresponding invariant
foliations of the base map $f$.
Similar constructions were used in \cite{Don} to study smooth time
changes of partially hyperbolic homogeneous flows.

In this section, we only consider an Anosov diffeomorphism $f$ on a closed manifold $X$.  
Let $\{\rho_{\bf t}\}$ be a locally free $C^{1+\beta}$ $\mathbb{R}^k$-action on a closed manifold $Y$. 
Given a $C^{1+\beta}$ function $\tau: X \to\mathbb{R}^k$, 
the skew product $F:X\times Y\to X\times Y$ is defined as 
$$
F(x,y) := \left(fx,\rho_{\tau(x)}y \right).
$$
We let $\pi_X$ and $\pi_Y$
denote the natural projections onto $X$ and $Y$. 
We write $T_{(x,y)}(X\times Y)$ for the tangent space of $X\times Y$
at $(x,y)$ and $T_yY$ for the tangent space of $Y$ at $y$. For a
foliation $\mathcal W$ and a point $z$ on one of its leaves, we denote
by $T_z\mathcal W$ the tangent space to that leaf at $z$.

For any $x \in X$ and $n \in \mathbb{N}$, 
we define the Birkhoff sum $S_n\tau:X\to \mathbb{R}^k$ by 
\begin{align*}
S_n\tau(x):=\sum_{i=0}^{n-1}\tau(f^ix). 
\end{align*}
By convention, we set $S_0\tau=\mathbf0$ and $S_{-n}\tau(x)=-S_n\tau(f^{-n}x)$ for $n\ge 1$.
In particular, the inverse of the forward fiber map at $x$ is 
$$
\rho_{S_n\tau(x)}^{-1}
=\rho_{-S_n\tau(x)}
=\rho_{S_{-n}\tau(f^nx)}.
$$
A straightforward calculation shows that for any $n\in\mathbb{Z}$, the $n$-th iterate of $F$ is given by 
\begin{align}\label{main theorem-Bernoulli-(0)}
F^n(x,y)=(f^nx,\rho_{S_n\tau(x)}y)
\end{align}

For each $x\in X$, let $W^s(x)$ and $W^u(x)$ denote 
the stable and unstable leaves of $f$ passing through $x$. 
For any $x\in X$ and $y\in W^s(x)$, we define 
$$
\beta^s(x,y)=\sum_{k=0}^\infty \left(\tau(f^kx)-\tau(f^ky)\right). 
$$
The sum $\beta^s$ is well-defined since $\tau(f^kx)-\tau(f^ky)$ decays exponentially. 
We define 
\begin{align*}
\widetilde{W}^s(x,z) = \{(y, \rho_{\beta^s(x,y)}z) : y \in W^s(x)\}. 
\end{align*}  

\begin{lemma}\label{Dong23-Lemma 3.1}
For any $(x,z)\in X\times Y$,
$\widetilde{W}^s(x,z)$ is a smooth invariant submanifold of $X\times Y$.
Moreover, it is a stable manifold of $F$ whose projection onto $X$ is $W^s(x)$.
\end{lemma}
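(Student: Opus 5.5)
The plan is to check three things directly from the definitions: invariance, smoothness, and the contraction property that identifies $\widetilde{W}^s(x,z)$ with the stable set.

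\textbf{Invariance.} First I record the cocycle identity for $\beta^s$. For $y\in W^s(x)$, splitting off the $k=0$ term gives
$$
\beta^s(x,y)=\tau(x)-\tau(y)+\beta^s(fx,fy).
$$
More generally, for every $n\ge 0$,
$$
\beta^s(x,y)=S_n\tau(x)-S_n\tau(y)+\beta^s(f^nx,f^ny).
$$
Take a point $(y,\rho_{\beta^s(x,y)}z)$ of $\widetilde{W}^s(x,z)$. By \eqref{main theorem-Bernoulli-(0)} and commutativity of the $\mathbb{R}^k$-action,
$$
F^n\bigl(y,\rho_{\beta^s(x,y)}z\bigr)=\bigl(f^ny,\rho_{S_n\tau(y)+\beta^s(x,y)}z\bigr)=\bigl(f^ny,\rho_{\beta^s(f^nx,f^ny)}\rho_{S_n\tau(x)}z\bigr).
$$
Since $f^ny\in W^s(f^nx)$, the right-hand side lies in $\widetilde{W}^s(F^n(x,z))$. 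Hence $F^n\widetilde{W}^s(x,z)=\widetilde{W}^s(F^n(x,z))$, and the family of these sets is $F$-invariant. The same computation with $f^{-1}$ covers negative $n$.

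\textbf{Smoothness.} The set $\widetilde{W}^s(x,z)$ is the graph over $W^s(x)$ of the map $y\mapsto \rho_{\beta^s(x,y)}z$. It is therefore an injectively immersed copy of $W^s(x)$, and its projection onto $X$ is exactly $W^s(x)$.

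Its regularity reduces to that of $y\mapsto\beta^s(x,y)$ along the leaf $W^s(x)$. Termwise, the derivative along the leaf of $\tau\circ f^k$ is $D\tau\circ Df^k|_{E^s}$, which is bounded by $C\lambda^k$. So the series of leafwise derivatives converges uniformly, and $\beta^s(x,\cdot)$ is $C^1$ along $W^s(x)$. Since $\tau$ is $C^{1+\beta}$, the derivatives are in addition uniformly Hölder along leaves, so $\beta^s(x,\cdot)$ is $C^{1+\beta'}$ on leaves. Leaves of $f$ are $C^{1+\beta}$ and the action is $C^{1+\beta}$, so the graph is a $C^{1+}$ submanifold. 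I will interpret ``smooth'' in this sense.

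\textbf{Stable manifold property.} Using the invariance formula, for $w=(y,\rho_{\beta^s(x,y)}z)$,
$$
d\bigl(F^nw,F^n(x,z)\bigr)\le d(f^nx,f^ny)+d_Y\bigl(\rho_{\beta^s(f^nx,f^ny)}z_n,\,z_n\bigr),\qquad z_n=\rho_{S_n\tau(x)}z.
$$
The tail $\beta^s(f^nx,f^ny)=\sum_{k\ge n}\bigl(\tau(f^kx)-\tau(f^ky)\bigr)$ is $O(\lambda^n)$, because $\tau$ is Lipschitz and $d(f^kx,f^ky)\le C\lambda^k$. The action is locally Lipschitz in $\mathbf t$ uniformly over the compact $Y$, so the second term is also $O(\lambda^n)$. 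Thus points of $\widetilde{W}^s(x,z)$ converge to the orbit of $(x,z)$ exponentially fast.

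For the converse, suppose $d(F^nw,F^n(x,z))\to0$ exponentially. Projecting onto $X$ forces $y\in W^s(x)$. Write $w=(y,z')$ and compare with $(y,\rho_{\beta^s(x,y)}z)$, which is already exponentially asymptotic to $(x,z)$. The two fiber points then satisfy $d_Y(\rho_{S_n\tau(y)}z',\rho_{S_n\tau(y)}z'')\to0$ exponentially, where $z''=\rho_{\beta^s(x,y)}z$.

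\textbf{Main obstacle.} The delicate step is this identification with the full stable set. In the orbit direction of the $\mathbb{R}^k$-action there is neither contraction nor expansion, since the action is locally free. So ``stable manifold'' should be understood as the fibered stable leaf lying over $W^s(x)$ that carries no fiber-stable component. Its tangent space is the direction with exponent given by the base $E^s$, which sits inside the stable distribution of $F$.

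I would prove the lemma in that sense: $\widetilde{W}^s$ is the unique $F$-invariant family of $C^1$ graphs over $W^s$ whose points converge to the orbit at the base rate. This follows from the explicit convergence estimate above together with uniqueness of the invariant graph. If the intended meaning is the full stable manifold, fiber stable directions of $\rho$ would need to be added. Whether that is needed is the point I would check against the paper's later use of $\widetilde{W}^s$.
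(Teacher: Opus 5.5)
Your proposal is correct and follows the paper's proof essentially line by line. The paper uses the same identity $\beta^s(x,y)+\tau(y)=\tau(x)+\beta^s(fx,fy)$ for invariance, termwise differentiation of $\beta^s$ along $E^s$ for regularity, and the tail $S_n\tau(x)-[S_n\tau(y)+\beta^s(x,y)]=-\sum_{k\ge n}(\tau(f^kx)-\tau(f^ky))$ for exponential forward asymptoticity. Your remarks that the leaf is only injectively immersed and $C^{1+\beta'}$ are fair refinements of the paper's ``smooth''.

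The paper means ``stable manifold'' exactly in your first sense: a leaf of the stable set of $(x,z)$ lying over $W^s(x)$, as opposed to the fiber leaves $\widehat W^s$. So you should drop the uniqueness characterization, which is not needed and not justified. It can in fact fail when the fiber contracts faster than the base: bending the graphs along fiber strong stable leaves by a term of order $|s|^{\alpha}$ with $\alpha>1$ (e.g. $\tau\equiv c$, $\lambda^{\alpha}=e^{-c}$) gives other invariant $C^1$ families of graphs that converge at the base rate.
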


\begin{proof}
Fix $x\in X$. For $\mathbf{v} \in E^s(y) = T_y W^s(x)$, we have
$$
\partial_{\mathbf{v}} \beta^s(x,y)
= \sum_{k=0}^\infty \partial_{\mathbf{v}} \left(\tau(f^kx)-\tau(f^ky)\right)
= -\sum_{k=0}^\infty \partial_{Df^k_y \mathbf{v}} \tau(f^ky).
$$
Since $\|Df^k_y \mathbf{v}\|$ decays exponentially,
the map $y \mapsto \beta^s(x,y)$ is smooth on $W^s(x)$.
As $\rho_{t}$ is smooth in the parameter $t\in\mathbb{R}^k$,
the smoothness of $\widetilde{W}^s(x,z)$ is inherited from $W^s(x)$ and $\beta^s$.
Thus $\widetilde{W}^s(x,z)$ is a smooth submanifold.

For any $y\in W^s(x)$ and $z\in Y$, the identity 
$\beta^s(x,y)+\tau(y)=\tau(x)+\beta^s(fx,fy)$ holds.
Thus, we derive 
\begin{align*}
F\left(y,\rho_{\beta^s(x,y)}z\right)
&=\left(fy,\rho_{\beta^s(x,y)+\tau(y)}z\right)
=\left(fy,\rho_{\tau(x)+\beta^s(fx,fy)}z\right)\\
&= \left(fy,\rho_{\beta^s(fx,fy)}\left(\rho_{\tau(x)}z\right)\right)
\in \widetilde{W}^s\left(fx,\rho_{\tau(x)}z\right)
=\widetilde{W}^s\left(F(x,z)\right).
\end{align*}
It follows that $\widetilde{W}^s(x,z)$ is invariant under $F$.

We now show that $(x,z)$ and $(y,\rho_{\beta^s(x,y)}z)$ belong to
the same stable foliation of $F$. 
Fix $x\in X$ and $y\in W^s(x)$.
By a straightforward calculation, for $n\in\mathbb{N}$, we obtain 
\begin{align*}
F^n\left(x,z\right)=\left(f^n(x),\rho_{S_n\tau(x)}z\right),\quad 
F^n\left(y,\rho_{\beta^s(x,y)}z\right)=\left(f^n(y),\rho_{S_n\tau(y)+\beta^s(x,y)}z\right). 
\end{align*}
Since $y\in W^s(x)$ and $\tau$ is $C^{1+\beta}$, the difference 
$$
S_n\tau(x)-\left[S_n\tau(y)+\beta^s(x,y)\right]=-\sum_{k=n}^{\infty}\left(\tau(f^kx)-\tau(f^ky)\right)
$$
decays to $0$ exponentially as $n\to\infty$. 
Thus, $(x,z)$ and $(y,\rho_{\beta^s(x,y)}z)$ 
are in the same stable foliation. 
Hence, $\widetilde{W}^s(x,z)$ is a stable manifold of $F$ whose projection onto $X$ is $W^s(x)$. 
\end{proof}

Similarly, one can construct invariant foliations for each 
$W^{(i)}(x)$ in the filtration 
$$
W^{(1)}(x)\subset W^{(2)}(x)\subset \cdots W^{(s)}(x)=W^s(x),
$$ 
as well as for the unstable foliations. 

Next, we discuss the Lyapunov exponents and pointwise dimensions 
of $f$ along ${W}^s(x)$ with respect to an ergodic measure $\mu$
and those of $F$ along $\widetilde{W}^s(x,z)$ with respect to an ergodic measure $\zeta$ satisfying $\pi_*\zeta=\mu$. 

For the family $\{\rho_{\bf t}\}$, 
we use $D_t \rho_s(z)$ to denote the differential of $\rho_{\bf t}(z)$ 
with respect to the parameter ${\bf t} \in \mathbb{R}^k$ evaluated at $s\in\mathbb{R}^k$, 
and $D_{\omega} \rho_s(z)|_{\omega=z}$ to denote the differential of the diffeomorphism 
$\rho_s$ at the point $z\in Y$. 

\begin{lemma}\label{base LY}
Fix $(x,z)\in X\times Y$. The Lyapunov exponents of $f$ along $W^s(x)$ 
coincide with those of $F$ along $\widetilde{W}^s(x,z)$. 
\end{lemma}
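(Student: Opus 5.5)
The plan is to parametrize the leaf $\widetilde{W}^s(x,z)$ by the base leaf $W^s(x)$ through the graph map $\Phi_{x,z}\colon W^s(x)\to \widetilde{W}^s(x,z)$, $\Phi_{x,z}(y)=(y,\rho_{\beta^s(x,y)}z)$. We then compare the growth of $DF^n|_{T\widetilde{W}^s}$ with that of $Df^n|_{E^s}$. By the invariance identity in the proof of Lemma \ref{Dong23-Lemma 3.1}, the conjugacy relation
$$
F^n\circ\Phi_{x,z}=\Phi_{f^nx,\,F^n(x,z)_Y}\circ f^n \quad\text{on } W^s(x)
$$
holds, where $F^n(x,z)_Y=\rho_{S_n\tau(x)}z$. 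Consequently, for $\mathbf v\in E^s(y)$ we get
$$
DF^n\bigl(D\Phi_{x,z}\mathbf v\bigr)=D\Phi_{f^nx,\,\rho_{S_n\tau(x)}z}\bigl(Df^n\mathbf v\bigr).
$$
The Lyapunov exponents along $\widetilde W^s$ are therefore the exponents of $Df^n|_{E^s}$, provided the graph differentials $D\Phi$ are uniformly bounded above and below along the orbit.

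The key step is this uniform bi-Lipschitz bound. Write $D\Phi_{x',z'}\mathbf w=(\mathbf w,\;D_t\rho_{\beta^s(x',y')}(z')\,\partial_{\mathbf w}\beta^s(x',y'))$. The lower bound $\|D\Phi\mathbf w\|\ge\|\mathbf w\|$ is immediate because the first component is $\mathbf w$ itself. For the upper bound, the series formula in the proof of Lemma \ref{Dong23-Lemma 3.1} gives $|\partial_{\mathbf w}\beta^s|\le \sum_k\|D\tau\|\,\|Df^k_y\mathbf w\|\le C\|\mathbf w\|$, uniformly in the base point by uniform hyperbolicity of $f$. We also need $D_t\rho_{s}(z')$ to be bounded for $s=\beta^s(x',y')$. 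Since $\beta^s$ is bounded on local stable leaves and $Y$ is compact, $\rho$ is $C^1$ jointly in $(t,z)$ on a compact parameter set. So $\|D_t\rho_s\|\le C'$ there.

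Care is needed because along the orbit we evaluate at the point $\Phi_{f^nx,\cdot}(f^ny)$, with $f^ny$ close to $f^nx$. There the relevant $\beta^s(f^nx,f^ny)\to0$, so the bound only gets better. For the tangent vectors at the base point itself ($y=x$), we have $\beta^s=0$ and $D_t\rho_0(z')$ is the generator map, bounded uniformly on compact $Y$. Hence $\|\mathbf w\|\le\|D\Phi\mathbf w\|\le (1+CC')\|\mathbf w\|$ for all orbit points. Taking $\frac1n\log$ and letting $n\to\pm\infty$ shows the exponents (and the Oseledets filtration, via the analogous statement for each $W^{(i)}$) coincide.

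I expect the main obstacle to be justifying uniformity of $D_t\rho$ and $D\beta^s$ along entire orbits rather than at one point. Compactness handles this once we restrict to local leaves, using that $\beta^s(f^nx,f^ny)$ is bounded for $y$ in the local stable leaf. If one wants global leaves, I would first reduce to local leaves by iterating forward, since exponents are asymptotic.
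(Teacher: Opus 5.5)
Your proposal is correct and follows essentially the paper's argument. The paper likewise writes the lifted vector $\tilde v_0\in T_{(x,z)}\widetilde W^s(x,z)$ as a graph over $v_0\in E^s(x)$, computes $DF^n\tilde v_0$ using the invariance of $\widetilde W^s$, and uses the boundedness of $D_t\rho$ and $D\beta^s$ to conclude that $\|DF^n\tilde v_0\|$ is comparable to $\|Df^n_x v_0\|$; this is exactly your two-sided bound $\|\mathbf w\|\le\|D\Phi\mathbf w\|\le(1+CC')\|\mathbf w\|$ (and the uniform bound on $D\beta^s$ you use is the correct form of what is needed). Your worry about uniformity along orbits is unnecessary: the group law gives $D_t\rho_{s}(z')=D_t\rho_{\mathbf t}(\rho_s z')\big|_{\mathbf t=0}$, which is bounded for every $s$ by compactness of $Y$, so no reduction to local leaves is required.
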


\begin{proof}
 Let $v_0\in T_x W^s(x)$, and 
 suppose the Lyapunov exponent of $f$ along $v_0$ is well-defined. 
By the definition of $\widetilde{W}^s(x,z)$, the tangent vector 
in $T_{(x,z)} \widetilde{W}^s(x,z)$ 
whose projection onto $T_xX$ is $v_0$ is given by 
\begin{align*}
\tilde{v}_0=\left(v_0,  D_y \rho_{\beta^s(x,y)}(z)\big|_{y=x}\,  v_0\right)=
\left(v_0, D_t \rho_{\bf t}(z)\big|_{t=0} \circ D_y \beta^s(x,y)\big|_{y=x} \,v_0\right). 
\end{align*}
For each $n\in\mathbb{N}$, since the projection of
$DF^n_{(x,z)} \tilde{v}_0$ onto $T_{f^n x}X$ is $Df^n_x v_0$, we have 
\begin{align*}
DF^n_{(x,z)}\tilde{v}_0
=\left( Df^n_x v_0,\ D_t \rho_{\bf t}\left( \rho_{S_n\tau(x)}z \right)\big|_{t=0}
\circ D_{y'} \beta^s(f^n x, y')\big|_{y'=f^n x}\, Df^n_x v_0 \right).
\end{align*}
Since $D_t \rho_{\bf t}\big(z\big)\big|_{t=0}$ is uniformly bounded over $z\in Y$ 
and $D_{y'} \beta^s(f^n x, y')\big|_{y'=f^n x}$ decays exponentially to $0$ as $n\to\infty$, 
the vertical component of the tangent vector remains bounded relative to 
its base component. We conclude that 
$$
\lim_{n\to\infty} \frac{1}{n} \log \| Df^n_x v_0 \|
=\lim_{n\to\infty} \frac{1}{n} \log \left\|
DF^n_{(x,z)} \tilde{v}_0\right\|.
$$
Therefore, the Lyapunov exponents along $\widetilde{W}^s(x,z)$ for $F$
coincide with the Lyapunov exponents along $W^s(x)$ for $f$.
\end{proof}

The case for the unstable manifold is analogous. 
We omit the proof here, but present the expression below. 
For any $x,y\in X$, we define 
\begin{align*}
\beta^u(x,y):=-\sum_{k=1}^\infty \left( \tau(f^{-k}x)-\tau(f^{-k}y) \right). 
\end{align*}
The unstable foliation corresponding to $W^u(x)$ is given by 
\begin{align*}
\widetilde{W}^u_f(x,z) = \{(y, \rho_{\beta^u(x,y)}z) : y \in W^u(x)\}. 
\end{align*}

We next identify the Lyapunov exponents for an ergodic invariant product measure. 

\begin{lemma}\label{all LY}
Suppose $\zeta = \mu \times \nu$ is an ergodic $F$-invariant measure on $X \times Y$. 
Then the Lyapunov exponents of $F$ with respect to $\zeta$ are the 
Lyapunov exponents of $f$ with respect to $\mu$ on $X$ 
and the Lyapunov exponents of $\rho_{\mu(\tau)}$ with respect to $\nu$ on $Y$.

\end{lemma}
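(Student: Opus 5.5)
The derivative of $F$ has a block-triangular structure, and I would exploit it. Differentiating $F(x,y)=(fx,\rho_{\tau(x)}y)$ gives, at $(x,y)$,
$$
DF_{(x,y)}=\begin{pmatrix} Df_x & 0\\ D_t\rho_{\tau(x)}(y)\circ D\tau_x & D_y\rho_{\tau(x)}(y)\end{pmatrix},
$$
so the vertical subbundle $\{0\}\times TY$ is $DF$-invariant. Since $\zeta=\mu\times\nu$ is ergodic, the Oseledets theorem applies. For a block-triangular cocycle with an invariant subbundle, the Lyapunov spectrum of the full cocycle is the union, with multiplicities, of the spectrum on the invariant subbundle and the spectrum of the induced quotient cocycle. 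The quotient cocycle is $Df$ over $\mu$. Its exponents are the exponents of $f$ with respect to $\mu$, and Lemma \ref{base LY} together with its unstable analogue already exhibits these along $\widetilde W^{s}$ and $\widetilde W^{u}$.

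It remains to compute the exponents of the restricted cocycle $A_n(x,y):=D_y\rho_{S_n\tau(x)}(y)$ on $TY$, using \eqref{main theorem-Bernoulli-(0)}. The key point is that $\rho$ is an $\mathbb R^k$-action, so $A_n(x,y)=D\rho_{S_n\tau(x)}$ at $y$ depends on $x$ only through $S_n\tau(x)$. By Birkhoff's theorem, $S_n\tau(x)=n\mu(\tau)+o(n)$ for $\mu$-a.e. $x$. I would write $S_n\tau(x)=n\mu(\tau)+\mathbf e_n(x)$ with $\|\mathbf e_n\|=o(n)$, and factor
$$
D\rho_{S_n\tau(x)}(y)=D\rho_{\mathbf e_n(x)}\big(\rho_{n\mu(\tau)}y\big)\circ D\big(\rho_{\mu(\tau)}\big)^n(y).
$$
Because the action is $C^1$ on a compact manifold, $\|D\rho_{\mathbf s}\|$ and $\|D\rho_{\mathbf s}^{-1}\|$ are bounded by $e^{L\|\mathbf s\|}$, so the correction factor contributes at most $e^{o(n)}$ in norm and in conorm.

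The exponents of $F$ on the vertical bundle are computed along $\zeta$-typical points $(x,y)$. Since $\zeta=\mu\times\nu$, a $\zeta$-typical point has $x$ $\mu$-typical and $y$ $\nu$-typical. The measure $\nu$ is $\rho_{\mu(\tau)}$-invariant, because it is invariant under the whole action; this is implicit in $\mu\times\nu$ being $F$-invariant, and if needed it can be deduced from the invariance. Hence the Oseledets data of $D\rho_{\mu(\tau)}$ exist at $\nu$-a.e. $y$.

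To match the full spectrum, not only the top exponent, I would compare the singular values of $A_n(x,y)$ with those of $D(\rho_{\mu(\tau)})^n(y)$. Multiplying by a matrix $B$ changes each singular value by a factor lying between $\|B^{-1}\|^{-1}$ and $\|B\|$, and here both factors are $e^{o(n)}$. The limits $\frac1n\log\sigma_i$ therefore coincide. This gives the exponents on the invariant subbundle, with multiplicities, via the exterior-power characterization of Lyapunov exponents.

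I expect the main obstacle to be technical rather than conceptual. First, $\nu$ need not be ergodic for the single map $\rho_{\mu(\tau)}$; in that case the exponents of $\rho_{\mu(\tau)}$ are measurable functions of $y$, and the statement should be read pointwise or in the averaged sense. Second, the $e^{L\|\mathbf s\|}$ bound requires uniform control of $D\rho_{\mathbf s}$ for small $\|\mathbf s\|$ compared with $n$, which follows from compactness and the group property $\rho_{\mathbf s}=\rho_{\mathbf s/m}^{\,m}$. Third, I would check that the triangular splitting gives the exact union of spectra, which is the standard fact for invariant subbundle plus quotient.
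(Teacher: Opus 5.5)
Your proposal is correct and follows essentially the paper's route. Both arguments use the block-triangular form of $DF^n$, Birkhoff's theorem giving $S_n\tau(x)=n\mu(\tau)+o(n)$, and the bound $\|D\rho_{\mathbf s}\|\le e^{L\|\mathbf s\|}$ (the paper's $\Upsilon$) to absorb the correction, with the base exponents coming from $Df$.

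Your quotient-cocycle and singular-value bookkeeping is a more careful version of the paper's vector-by-vector estimate and its appeal to Lemma~\ref{base LY}. Your factorization $D\rho_{\mathbf e_n(x)}(\rho_{n\mu(\tau)}y)\circ D(\rho_{\mu(\tau)})^n(y)$ also puts the correction on the side that actually yields the norm bound. As for your non-ergodicity worry, the comparison itself shows that the pointwise exponents of $\rho_{\mu(\tau)}$ equal the vertical exponents of the ergodic measure $\zeta$, so they are $\nu$-a.e. constant.
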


\begin{proof}
A direct computation shows that the differential of $F$ at $(x,y)$ is
\begin{align*}
D_{(x,y)}F = \begin{pmatrix} D_x f & 0 \\ * & D_y \rho_{\tau(x)} \end{pmatrix},
\end{align*}
where $D_x f$ denotes the differential of $f$ at $x$, and $D_y \rho_{\tau(x)}$ denotes the differential of $\rho_{\tau(x)}$ at $y$. Iterating this formula $n$ times, we obtain
\begin{align*}
D_{(x,y)}F^n = \begin{pmatrix} D_x f^n & 0 \\ * & D_y \rho_{S_n\tau(x)} \end{pmatrix}.
\end{align*}
We define 
$$
\Upsilon := \sup\left\{ \|D_y \rho_{\mathbf{t}}\| : y \in Y,\ \mathbf{t} \in \mathbb{R}^k,\ \|\mathbf{t}\| \leq 1 \right\} + 1 < \infty,
$$
where the finiteness follows from the compactness of $Y$ and the continuity of $\rho_{\mathbf{t}}$ in $\mathbf{t}$ and $y$.

Fix $\epsilon > 0$ and a tangent vector $v \in T_y Y$, 
identifying it with $(0, v) \in T_{(x,y)}(X \times Y)$. 
By the Birkhoff Ergodic Theorem, for $\mu$-almost every $x \in X$, 
there exists $N = N(x,\epsilon) > 0$ such that 
$\|S_n\tau(x) - n\mu(\tau)\| < n\epsilon$ for all $n \geq N$. 
For $\zeta$-almost every $(x,y) \in X \times Y$, we have
\begin{align*}
\frac{1}{n}\log \left\| D_y \rho_{S_n\tau(x)} v \right\|
&= \frac{1}{n}\log \left\| D_y \rho_{n\mu(\tau)} \circ D_y \rho_{S_n\tau(x) - n\mu(\tau)} v \right\| \\
&\leq \frac{1}{n}\log \left\| D_y \rho_{n\mu(\tau)} v \right\| + \frac{1}{n}\log \left\| D_y \rho_{S_n\tau(x) - n\mu(\tau)} \right\| \\
&\leq \frac{1}{n}\log \left\| D_y \rho_{n\mu(\tau)} v \right\| + \frac{\log \Upsilon^{\|S_n\tau(x) - n\mu(\tau)\|}}{n} \\
&\leq \frac{1}{n}\log \left\| D_y \rho_{n\mu(\tau)} v \right\| + \epsilon \log \Upsilon.
\end{align*}
The reverse inequality can be established similarly. 
Letting $n \to \infty$ and then $\epsilon \to 0$, we conclude that
\begin{align}\label{all LY-(1)}
\lim_{n\to \infty} \frac{1}{n}\log \left\| D_y \rho_{S_n\tau(x)} v \right\|
= \lim_{n\to \infty}\frac{1}{n}\log \left\| D_y \rho_{\mu(\tau)}^n v \right\|.
\end{align}
By the Multiplicative Ergodic Theorem \cite{Oseledec}, 
for $\nu$-almost every $y \in Y$ 
(hence for $\zeta$-almost every $(x,y) \in X \times Y$), 
the limit in \eqref{all LY-(1)} exists and equals the Lyapunov exponent of $\rho_{\mu(\tau)}$ 
with respect to $\nu$ along the tangent vector $v \in T_y Y$. 
Combining this with Lemma \ref{base LY}, we complete the proof.
\end{proof}

For convenience, we use $\widetilde{W}^u$ to denote the unstable manifold 
whose projection is the unstable manifold of $f$, 
and $\widehat{W}^u$ to denote the unstable manifold 
corresponding to the tangent space $\{0\}\times T_yY$. 
Similar definitions and notations are adopted for the stable case.

\section{Bernoulli property}\label{sect: Bernoulli property}

Here we endow the product space $X\times Y$ with the product metric 
$$
d_{X\times Y}((x_1,y_1),(x_2,y_2))=\max\left\{ d_X(x_1,x_2), d_Y(y_1,y_2)\right\}, 
$$
where $d_X$ and $d_Y$ denote the metrics on $X$ and $Y$. 
For brevity, we often omit the subscripts and denote the metric by $d$ 
whenever the context is clear. 
For convenience, in both cases of Theorem \ref{main theorem-Bernoulli}, 
we use $\delta_0$ to denote the exponential decay rate 
such that \eqref{Mal25-(1.2)} or \eqref{DDKN22-1-(3.1)} holds. 

\subsection{Fiberwise constructions and matching}\label{subsect:Fiberwise constructions and matching}

This subsection adapts the arguments of \cite{DKRH24, Mal25}
to the present skew-product setting. 
The main difference is that the fiber dynamics depend on the base point 
through the cocycle sums $S_n\tau(x)$. 
Rather than restating all the preliminary definitions, 
we focus on the modifications needed to obtain the required uniformity in this setting.

Since $\mu$ is ergodic, the Birkhoff ergodic theorem implies that for $\mu$-a.e. $x\in X$, 
\begin{align}\label{main theorem-Bernoulli-(1)}
\lim_{n\to + \infty} \frac{S_n\tau(x)}{n}
=\lim_{n\to - \infty} \frac{S_n\tau(x)}{n}
=\int_X \tau\,\rd\mu=\mu(\tau). 
\end{align} 
Thus, by Egorov's theorem and the regularity of $\mu$, for any $\delta>0$, 
there exists a compact subset $X_\delta\subset X$ with
$\mu(X_\delta)>1-\delta$ and $N_\delta\in\mathbb N$ such that
$$
\|S_n\tau(x)-n\mu(\tau)\|<|n|\delta
$$
for every $x\in X_\delta$ and every $n\in\mathbb Z$ with $|n|\ge N_\delta$. 

We refer the reader to \cite{DKRH24, Mal25} for the Pesin theory concerning 
unstable and fake center-stable foliations. 
As established in Lemma \ref{all LY}, 
the invariant foliations of $F$ coincide with the stable and 
unstable foliations of the diffeomorphism $\rho_{\mu(\tau)}$. 
Notably, as observed in Lemma \ref{Dong23-Lemma 3.1}, 
these foliations differ from those of $f$; 
in the latter case, the $Y$-coordinate varies along the leaves 
rather than remaining constant. 
Consequently, for sufficiently large $|n|$, the fiber dynamics of $F^n$ 
over $X_\delta$ can be shadowed by the iterates of $\rho_{\mu(\tau)}$. 
This ensures that the Pesin-theoretic constructions in \cite{DKRH24, Mal25} 
remain valid in our fiberwise dynamics. 

We use the parameter $\iota$ to replace $\tau$ from \cite{Mal25}, 
since $\tau$ represents the cocycle in this paper. 
Following \cite[\S 2.2]{Mal25}, for a given compact set $X_{\delta}$, 
we introduce the subset $\mathcal{O}_{\iota,\delta} \subset Y$ 
such that the product $X_{\delta} \times \mathcal{O}_{\iota,\delta}$ 
constitutes the Pesin set of $F$ on $X \times Y$, 
which corresponds to $\mathcal{P}_\tau$ in \cite{Mal25}. 
Furthermore, the previous discussion ensures that $\mathcal{O}_{\iota,\delta}$ 
exhibits uniform asymptotic behavior across all $x \in X_{\delta}$. 
As $\delta\to 0, \iota\to 0$, 
$(\mu\times m)(X_\delta \times \mathcal{O}_{\iota,\delta})\to 1$. 
For convenience, given $X_{\delta}$, 
when we consider a subset on $Y$ whose measure is close to $1$, 
we always assume that this subset is contained in $\mathcal{O}_{\iota,\delta}$ for 
some sufficiently small $\iota$. 

Specifically, the construction of ``good boxes'' on $Y$ becomes 
essentially independent of the $X$-coordinate when restricted to $X_{\delta}$, 
as the fiberwise action of the skew product is controlled 
by the uniform Birkhoff average. 
This justifies the following generalization of \cite[Lemma 3.1]{Mal25} 
to our skew product setting: 

\begin{lemma} \label{Mal25-Lemma 3.1}
For any $\delta > 0$, 
there exist a compact subset $X_\delta \subset X$ with $\mu(X_{\delta}) > 1 - \delta$ 
and a collection of good boxes covering a subset of $Y$ of large measure, 
such that the conclusions of \cite[Lemma 3.1]{Mal25} hold for all points 
in $X_\delta \times Y$ whenever $n$ is sufficiently large. 

Notably, the construction of these good boxes on the fibers 
and all associated estimates are uniform across $X_\delta$, 
depending only on the integrated rotation vector $\mu(\tau)$. 
\end{lemma}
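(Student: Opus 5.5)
The idea is to reduce the skew-product statement to the single-map statement \cite[Lemma 3.1]{Mal25} for the fiber diffeomorphism $g:=\rho_{\mu(\tau)}$ with respect to $m$. The transfer rests on the identity $F^n(x,y)=(f^nx,\rho_{S_n\tau(x)}y)$ from \eqref{main theorem-Bernoulli-(0)}, combined with the uniform Birkhoff control on $X_\delta$.

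First, fix $\delta>0$ and take the compact set $X_\delta$ and the integer $N_\delta$ produced by Egorov's theorem from \eqref{main theorem-Bernoulli-(1)}. Then $\|S_n\tau(x)-n\mu(\tau)\|<|n|\delta$ for all $x\in X_\delta$ and all $|n|\ge N_\delta$. Writing $\mathbf{e}_n(x):=S_n\tau(x)-n\mu(\tau)$, we get the factorization
\begin{align*}
\rho_{S_n\tau(x)}=\rho_{\mathbf{e}_n(x)}\circ g^n .
\end{align*}
So on $X_\delta$ the fiber map of $F^n$ is $g^n$ followed by an $\mathbb{R}^k$-translation of size at most $|n|\delta$.

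Second, apply \cite[Lemma 3.1]{Mal25} to $(Y,g,m)$. This is legitimate because in either hypothesis $m$ is an SRB (respectively limit SRB) measure for every $\rho_{\mathbf t}$, in particular for $g$. By Lemma \ref{all LY}, the fiber exponents of $F$ are exactly those of $g$, so the Pesin sets $\mathcal{O}_{\iota,\delta}$ and the good boxes are built from $g$ alone. The resulting construction depends only on $\mu(\tau)$, which is the uniformity claimed in the lemma.

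The key point is that $\rho_{\mathbf{e}}$ commutes with $g$ (the action is abelian) and moves points along the orbit of the locally free action. Because the action is locally free, these orbits are transverse directions inside the center of $g$. Hence $\rho_{\mathbf{e}}$ maps unstable and stable leaves of $g$ to unstable and stable leaves of $g$: $\rho_{\mathbf e}W^{u}_g(y)=W^u_g(\rho_{\mathbf e}y)$. It also preserves $m$ and has derivative bounded by $\Upsilon^{\|\mathbf e\|}$, where $\Upsilon$ is the constant from the proof of Lemma \ref{all LY}.

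Third, verify each conclusion of \cite[Lemma 3.1]{Mal25} for $F^n$ restricted to $X_\delta\times Y$. There are three kinds of estimates.
\begin{itemize}
\item \emph{Expansion and contraction along unstable and stable plaques.} The map $\rho_{\mathbf{e}_n(x)}$ changes these rates by at most a factor $\Upsilon^{|n|\delta}$. This distortion is absorbed by choosing $\delta$ small relative to the Pesin parameter $\iota$ and the smallest nonzero exponent of $g$.
\item \emph{Distortion and absolute continuity of holonomies in the boxes.} These are unchanged, since $\rho_{\mathbf e}$ is a measure-preserving diffeomorphism conjugating the leaf structure of $g$.
\item \emph{Equidistribution estimates.} These come from exponential mixing, \eqref{DDKN22-1-(3.1)} or \eqref{Mal25-(1.2)}, applied at time $S_n\tau(x)$ rather than $n\mu(\tau)$. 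Since $\|S_n\tau(x)\|\ge |n|(\|\mu(\tau)\|-\delta)$, the decay rate $e^{-\delta_0(\|\mu(\tau)\|-\delta)|n|}$ stays uniform over $X_\delta$ once $\delta<\|\mu(\tau)\|/2$. This is where $\mu(\tau)\neq\mathbf 0$ is used.
\end{itemize}

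The main obstacle is the first kind of estimate. The error $\mathbf{e}_n(x)$ is sublinear but not bounded, so the exponential factor $\Upsilon^{|n|\delta}$ competes with the hyperbolicity of $g^n$, and good boxes for $g$ of size $e^{-\iota n}$ could be moved off the Pesin set. The plan is to pass to the tempered Pesin set of the whole action $\{\rho_{\mathbf t}\}$. On that set the derivative along the orbit directions is neutral, with growth at most $e^{\epsilon\|\mathbf t\|}$, rather than governed by the crude bound $\Upsilon$. We then choose $\delta\ll\iota$, so the shifted points $\rho_{\mathbf{e}_n(x)}y$ remain in a slightly enlarged Pesin set $\mathcal{O}_{2\iota,\delta}$ for a set of $y$ of measure close to $1$; since $\rho_{\mathbf e}$ preserves $m$, this holds for every fixed $x$. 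The constants in \cite[Lemma 3.1]{Mal25} then hold with $\iota$ replaced by $2\iota$, uniformly in $x\in X_\delta$, for $n\ge N_\delta$.
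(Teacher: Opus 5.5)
Your proposal follows essentially the same route as the paper, whose justification is the discussion preceding the lemma: an Egorov set $X_\delta$ with $\|S_n\tau(x)-n\mu(\tau)\|<|n|\delta$, Lemma \ref{all LY} identifying the fiber foliations of $F$ with those of $\rho_{\mu(\tau)}$, good boxes built from $\rho_{\mu(\tau)}$ alone (hence independent of $x$), and shadowing of the fiber maps of $F^n$ over $X_\delta$ by $\rho_{\mu(\tau)}^n$, which your factorization $\rho_{S_n\tau(x)}=\rho_{\mathbf e_n(x)}\circ\rho_{\mu(\tau)}^n$ makes explicit. One correction to your last paragraph: what absorbs $\Upsilon^{|n|\delta}$ is taking the Egorov tolerance (decoupled from the measure deficit) much smaller than the tempering slack of the Pesin charts and the smallest positive exponent of $\rho_{\mu(\tau)}$, not smaller than $\iota$, which only governs constants; since $\rho_{\mathbf e}$ commutes with $\rho_{\mu(\tau)}$, this crude bound already controls the Pesin constants at $\rho_{\mathbf e}y$ up to a factor $\Upsilon^{O(\|\mathbf e\|)}$, so you need no Pesin set of the whole action (neutrality along orbit directions would not help anyway, because $\rho_{\mathbf e}$ generally has nonzero exponents on the stable and unstable bundles of $\rho_{\mu(\tau)}$), and the constants degrade by a tempered factor $e^{O(\delta)n}$ rather than by replacing $\iota$ with $2\iota$.
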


We follow the notation of \cite{Mal25}, such as the Lyapunov coordinate
map $h_x$ introduced in \cite[Lemma 2.8]{Mal25}. 
To avoid repeating the preliminary constructions, 
we refer to \cite{Mal25} for their detailed definitions. 
Throughout this section, these notations are used only for the fiber dynamics on $Y$; 
in particular, $\mathbb{R}^{cs}$ denotes the Euclidean space associated with
$\rho_{\mu(\tau)}$ and is independent of the base dynamics $f$.

In the arguments below, the iterates $f^n$ appearing in \cite{Mal25} are replaced, 
for a fixed base point $x$, by the fiber maps $\rho_{S_n\tau(x)}$. 
The uniform estimates established above on the chosen compact subset 
of the base allow this replacement to be made uniformly in $x$. 
We therefore focus only on the points in the arguments of \cite{Mal25} 
where this replacement requires a modification.

\begin{proposition}\label{Mal25-Proposition 3.5}
Suppose the action $\{\rho_{\bf t}\}$ is exponentially mixing 
with respect to the nonatomic measure $m$. 
Fix small $ \epsilon,\epsilon_0>0 $, and 
assume that the conditions of \cite[Proposition 3.5]{Mal25} hold. 
Then, there exist $ \gamma_0>0$ and $n_0 \in\mathbb{N}$ such that if 
 $(G_j)_{j \in J(n)}$ and $(\overline{C}_i)_{i \in I(n)}$ are good boxes 
 from Lemma \ref{Mal25-Lemma 3.1} 
 associated with value $\gamma_0$ and the corresponding parameters, 
 then for all $n \geq n_0$, $i \in I(n)$, $j \in J(n)$, and 
 any $x \in X_{\epsilon_0}$, 
\begin{align*}
(1+\epsilon)^{-1} \leq \frac{m\left(C_j \cap \rho^{-1}_{S_{n}\tau(x)} \overline{C}_i \right)}{m(C_j) m(\overline{C}_i)} \leq 1+\epsilon. 
\end{align*}
\end{proposition}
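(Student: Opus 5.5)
We follow the proof of \cite[Proposition 3.5]{Mal25} and replace the iterate $f^n$ there by the fiber map $\rho_{S_n\tau(x)}$. The only new input is that the exponential mixing estimate \eqref{DDKN22-1-(3.1)} must be applied at the time $\mathbf t=S_n\tau(x)$, which depends on $x$. We therefore need a lower bound on $\|S_n\tau(x)\|$ that is linear in $n$ and uniform in $x\in X_{\epsilon_0}$.

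Let $c_0:=\|\mu(\tau)\|>0$. Apply the Egorov-type construction preceding Lemma \ref{Mal25-Lemma 3.1} with $\delta\le\min\{\epsilon_0,c_0/2\}$. This gives, after shrinking $X_{\epsilon_0}$ if necessary (its measure stays $>1-\epsilon_0$), a time $N_\delta$ such that
$$\|S_n\tau(x)\|\ge n(c_0-\delta)\ge nc_0/2$$
for all $x\in X_{\epsilon_0}$ and all $n\ge N_\delta$. The mixing error at time $S_n\tau(x)$ is then at most $Ce^{-\delta_0 c_0 n/2}$, uniformly in $x$. So the fiber system behaves as a diffeomorphism that is exponentially mixing at the effective rate $\delta_0c_0/2$ along the times $n$. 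We also record the bound $\|S_n\tau(x)\|\le n\|\tau\|_{C^0}$. It controls how far $\rho_{S_n\tau(x)}$ can distort the $C^r$ norms of the test functions and the geometry of the boxes. Its growth rate is comparable to that of $\rho_{\mu(\tau)}^n$, up to a factor $\Upsilon^{n\delta}$ as in the proof of Lemma \ref{all LY}.

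Next we reproduce the smoothing step of \cite{Mal25}. The sets $C_j$ and $\overline C_i$ are good boxes from Lemma \ref{Mal25-Lemma 3.1} at scale comparable to $e^{-\gamma_0 n}$. We approximate their indicators from above and below by mollified functions $\phi_j^\pm$ and $\psi_i^\pm$ at scale $e^{-\gamma n}$, where $\gamma$ is somewhat larger than $\gamma_0$. These functions have $\|\cdot\|_{C^r}\lesssim e^{r\gamma n}$. By the $D$-nice / regularity estimates on the boundaries of the boxes,
$$m(\phi^\pm_j)=m(C_j)\bigl(1+O(e^{-(\gamma-\gamma_0)n})\bigr),$$
and similarly for $\psi_i^\pm$. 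Sandwiching then gives
$$m\bigl(C_j\cap\rho^{-1}_{S_n\tau(x)}\overline C_i\bigr)\le \int\phi_j^+\,\psi_i^+\circ\rho_{S_n\tau(x)}\,\rd m\le m(\phi_j^+)\,m(\psi_i^+)+Ce^{-\delta_0c_0n/2}e^{2r\gamma n},$$
with the symmetric lower bound. We choose $\gamma_0<\gamma$ so small that $2r\gamma<\delta_0c_0/4$. The error term is then $o\bigl(m(C_j)m(\overline C_i)\bigr)$. This uses the lower bound on the measure of good boxes, $m(C_j),m(\overline C_i)\ge e^{-\kappa\gamma_0 n}$ for some $\kappa$ depending on the dimension and the local dimension of $m$, which is part of the conclusions of \cite[Lemma 3.1]{Mal25} carried over by Lemma \ref{Mal25-Lemma 3.1}. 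Taking $n_0\ge N_\delta$ large enough that all error factors lie in $[(1+\epsilon)^{-1},1+\epsilon]$ completes the argument. In the limit SRB case, \eqref{Mal25-(1.2)} replaces \eqref{DDKN22-1-(3.1)} in exactly the same way.

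The main obstacle is the compatibility of scales. The box sizes $e^{-\gamma_0 n}$ and the lower bounds on their measures come from Pesin theory for $\rho_{\mu(\tau)}$. We must check that they survive replacing $\rho_{\mu(\tau)}^n$ by $\rho_{S_n\tau(x)}$, which differ by the bounded-rate correction $\rho_{S_n\tau(x)-n\mu(\tau)}$ with $\|S_n\tau(x)-n\mu(\tau)\|<n\delta$. This correction can still inflate derivatives by $\Upsilon^{n\delta}$. The remedy is to take $\delta$ small relative to $\gamma_0$ and $\delta_0c_0$, so that all exponents $\delta\log\Upsilon$, $r\gamma$ and $\kappa\gamma_0$ are dominated by the mixing exponent $\delta_0c_0/2$. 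Fixing these parameters in the right order ($\gamma_0$, then $\gamma$, then $\delta$, then $n_0$) is the step that requires care.
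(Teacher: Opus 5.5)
Your proposal is correct and is essentially the paper's argument. You approximate the indicators of $C_j$ and $\overline C_i$ by regular functions and apply the exponential mixing of $\{\rho_{\mathbf t}\}$ at time $\mathbf t=S_n\tau(x)$ in place of the mixing of $f^n$; the paper writes this as a four-factor ratio decomposition rather than your upper/lower sandwich. Your Egorov bound $\|S_n\tau(x)\|\ge n\|\mu(\tau)\|/2$ on $X_{\epsilon_0}$ makes explicit the uniform-in-$x$ decay that the paper leaves implicit.

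The derivative-inflation issue in your last paragraph does not arise here. The mixing estimate involves only the $C^r$ norms of the test functions themselves, never of their compositions with $\rho_{S_n\tau(x)}$, and the boxes do not depend on $x$. So you only need $\epsilon_0<\|\mu(\tau)\|/2$, and no constraint linking the Egorov parameter to $\gamma_0$.
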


\begin{proof}
The proof follows a similar logic to that of \cite[Proposition 3.5]{Mal25}, 
with several necessary modifications. 
Following the approach in \cite[Lemma 3.1]{Mal25}, 
we use Lipschitz functions $h$ and $g$ to 
approximate the indicator functions of $C_j$ and $\overline{C}_i$, 
respectively. Specifically, the estimate in \cite[(3.26)]{Mal25} is replaced by:
\begin{align}\label{Mal25-(2.26)}
\frac{m(C \cap \rho^{-1}_{S_{n}\tau(x)}\overline{C})}{m(C) m(\overline{C})}
= \frac{m(C \cap \rho^{-1}_{S_n\tau(x)}\overline{C})}{m(h \cdot (g \circ \rho_{S_n\tau(x)}))}
\cdot \frac{m(h \cdot (g \circ \rho_{S_n\tau(x)}))}{m(h)m(g)}
\cdot \frac{m(h)}{m(C)}
\cdot \frac{m(g)}{m(\overline{C})}.
\end{align}
Thus, by applying \eqref{DDKN22-1-(3.1)} instead of the exponential mixing property of $f$, 
the rest of the argument remains applicable to our setting, 
which completes the proof. 
\end{proof}

By an analogous argument, we obtain the following result for the limit SRB measure. 
Further comments can be found in \cite{Mal25}.  

\begin{proposition}\label{Mal25-Proposition 3.6}
Suppose the volume measure $m_{\vol}$ is almost exponentially mixing and 
$m$ is the limit SRB measure of $\rho_{\mathbf{t}}$ 
for any $\mathbf{t} \in \mathbb{R}^k$. 
Fix small $ \epsilon,\epsilon_0>0 $, and 
assume that the conditions of \cite[Proposition 3.6]{Mal25} hold. 
Then, there exist $ \gamma_0>0$ and $n_0 \in\mathbb{N}$ such that if 
 $(G_j)_{j \in J(n)}$ and $(\overline{C}_i)_{i \in I(n)}$ are good boxes 
 from Lemma \ref{Mal25-Lemma 3.1} 
 associated with value $\gamma_0$ and the corresponding parameters, 
 then for all $n \geq n_0$, $i \in I(n)$, $j \in J(n)$, and any $x \in X_{\epsilon_0}$, 
\begin{align*}
(1+\epsilon)^{-1} \leq \frac{m_{\vol}\left(C_j \cap \rho^{-1}_{S_{n}\tau(x)} \overline{C}_i \right)}{m_{\vol}(C_j) m(\overline{C}_i)} \leq 1+\epsilon. 
\end{align*}
\end{proposition}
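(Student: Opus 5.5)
The plan is to mirror the proof of Proposition \ref{Mal25-Proposition 3.5}, again following \cite[Proposition 3.6]{Mal25}, but to replace the exponential mixing estimate \eqref{DDKN22-1-(3.1)} by the almost exponential mixing of $m_{\vol}$ combined with the limit property \eqref{Mal25-(1.2)}. The required form of \eqref{Mal25-(1.2)} is for the action: for $g,h\in C^r(Y)$ and all $\mathbf t\in\mathbb R^k$,
\begin{align*}
\left|\int_Y g\circ\rho_{\mathbf t}\cdot h\,\rd m_{\vol}-\int_Y g\,\rd m\int_Y h\,\rd m_{\vol}\right|\le Ce^{-\delta_0\|\mathbf t\|}\|g\|_{C^r}\|h\|_{C^r}.
\end{align*}
This follows from Definition \ref{def:almostmixing}. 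Write $h=(h-m_{\vol}(h))+m_{\vol}(h)$. The mean-zero part is controlled directly by almost exponential mixing. For the constant part, we need $|m_{\vol}(g\circ\rho_{\mathbf t})-m(g)|\le Ce^{-\delta_0\|\mathbf t\|}\|g\|_{C^r}$. This is the statement that $(\rho_{\mathbf t})_*m_{\vol}\to m$ exponentially fast, uniformly in the direction of $\mathbf t$. It is exactly what the hypothesis ``$m$ is the limit SRB measure of $\rho_{\mathbf t}$ for any $\mathbf t$'' should provide, via the argument of \cite{BORH23-2} applied along the $\mathbb R^k$-action (write $\mathbf t=s\mathbf u$ and use the group property together with the mixing bound).

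With this estimate in hand, the main step is the analogue of \eqref{Mal25-(2.26)}. Approximate $\mathbf 1_{C_j}$ by a Lipschitz $h$ and $\mathbf 1_{\overline C_i}$ by a Lipschitz $g$, as in \cite[Lemma 3.1]{Mal25}, and split
\begin{align*}
\frac{m_{\vol}(C\cap\rho^{-1}_{S_n\tau(x)}\overline C)}{m_{\vol}(C)m(\overline C)}
=\frac{m_{\vol}(C\cap\rho^{-1}_{S_n\tau(x)}\overline C)}{m_{\vol}(h\cdot g\circ\rho_{S_n\tau(x)})}\cdot\frac{m_{\vol}(h\cdot g\circ\rho_{S_n\tau(x)})}{m_{\vol}(h)m(g)}\cdot\frac{m_{\vol}(h)}{m_{\vol}(C)}\cdot\frac{m(g)}{m(\overline C)}.
\end{align*}
The $C^r$ norms of $h$ and $g$ grow only like $e^{\gamma n}$, with $\gamma$ tied to $\gamma_0$ and to the box sizes. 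For $x\in X_{\epsilon_0}$ and $n\ge N_{\epsilon_0}$ we have $\|S_n\tau(x)\|\ge n(\|\mu(\tau)\|-\epsilon_0)$. Taking $\epsilon_0<\|\mu(\tau)\|/2$, the middle factor therefore differs from $1$ by at most
\begin{align*}
Ce^{-\delta_0 n\|\mu(\tau)\|/2}\,e^{2\gamma n}\big/\big(m_{\vol}(h)m(g)\big).
\end{align*}
Choose $\gamma_0$ small relative to $\delta_0\|\mu(\tau)\|$. Since the box measures are bounded below by $e^{-\gamma' n}$ with $\gamma'$ small, this term is below $\epsilon/4$ for $n\ge n_0$, uniformly in $x$.

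The remaining three factors do not involve $x$ except through the first one. The last two are handled exactly as in \cite{Mal25}, through the boundary estimates of good boxes, using $m_{\vol}$ for $C_j$ and $m$ for $\overline C_i$. The first factor needs the bound $m_{\vol}\big((\mathrm{supp}\,h\setminus C)\cap\rho^{-1}_{S_n\tau(x)}\operatorname{supp} g\big)$, together with the analogous bound for the $g$-collar. This is again estimated by applying the displayed mixing inequality to Lipschitz functions supported on collar neighbourhoods. The collars have small $m_{\vol}$- and $m$-measure, by the $D$-Nice-type estimates from Lemma \ref{Mal25-Lemma 3.1}, which are uniform on $X_{\epsilon_0}$.

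I expect the main obstacle to be the first step: justifying \eqref{Mal25-(1.2)} uniformly over all $\mathbf t$, that is, obtaining exponential convergence $(\rho_{\mathbf t})_*m_{\vol}\to m$ along arbitrary directions, since $S_n\tau(x)$ only lies in a cone around $\mu(\tau)$. I would first try to use the assumption that $m$ is the common limit for every $\rho_{\mathbf t}$. Then, since the directions of $S_n\tau(x)$ stay in a compact cone of directions, I would use local freeness and a compactness argument over unit vectors $\mathbf u$ to make the constants uniform.
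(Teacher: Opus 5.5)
Your proposal is correct and follows the paper's proof. It uses the same four-factor splitting, with $m_{\vol}$ on $C_j$ and $m$ on $\overline C_i$, replaces \eqref{DDKN22-1-(3.1)} by the fiber version of \eqref{Mal25-(1.2)}, and uses the drift bound $\|S_n\tau(x)\|\ge n(\|\mu(\tau)\|-\epsilon_0)$ uniformly on $X_{\epsilon_0}$. The paper takes the $\mathbf t$-uniform action version of \eqref{Mal25-(1.2)} for granted, and your telescoping along $s\mathbf u$ already settles it with direction-independent constants: unit steps cost $Ce^{-\delta_0 s}\|g\|_{C^r}\sup_{\|\mathbf s\|\le 1}\|J_{\mathbf s}-1\|_{C^r}$, where $J_{\mathbf s}$ is the density of $(\rho_{\mathbf s})_*m_{\vol}$ (approximated if $r$ exceeds its H\"older exponent), and the limit along each ray is $m$ by hypothesis, so no compactness over directions or local freeness is needed.
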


\begin{proof}
The proof follows the same lines as that of Proposition \ref{Mal25-Proposition 3.5}, 
except for replacing \eqref{Mal25-(2.26)} with the following equality: 
\begin{align*}
\frac{m_{\vol}(C \cap \rho^{-1}_{S_n\tau(x)}\overline{C})}
      {m_{\vol}(C)m(\overline{C})}
=\frac{m_{\vol}(C \cap \rho^{-1}_{S_n\tau(x)}\overline{C})}
     {m_{\vol}(h\cdot(g\circ\rho_{S_n\tau(x)}))}
\cdot \frac{m_{\vol}(h\cdot(g\circ\rho_{S_n\tau(x)}))}{m_{\vol}(h)m(g)}
\cdot\frac{m_{\vol}(h)}{m_{\vol}(C)}
\cdot\frac{m(g)}{m(\overline{C})},
\end{align*}
and applying \eqref{Mal25-(1.2)} in place of \eqref{DDKN22-1-(3.1)}. 
\end{proof} 

Next, we describe the construction of fake center-stable foliations, 
following the approach in \cite[Proposition 4.1]{Mal25}. 

\begin{proposition}\label{Mal25-Proposition 4.1}
Suppose $m$ is an ergodic SRB measure of $\rho_{\mathbf{t}}$ 
for all $\mathbf{t} \in \mathbb{R}^k$. 
Fix a small $ \epsilon>0 $, $\epsilon_0=\frac{1}{100}\epsilon$ and $\delta=\epsilon^{{100}^{100}}$, 
and assume that the conditions of \cite[Proposition 4.1]{Mal25} hold. 
For sufficiently large $n$, let $(G_j)_{j \in J(n)}$ and 
$(\overline{C}_i)_{i \in I(n)}$ denote the good boxes from Lemma \ref{Mal25-Lemma 3.1}. 
For $i\in I_n$, let $\overline{\mathcal{F}}_i^{cs}$ be the foliation 
of $\overline{C}_i$ by vertical lines $h_{z_i} (\mathbb{R}^{cs})$. 

Under these assumptions, there exist $n_0\in\mathbb{N}$ such that 
for $n > n_0$, there exist subsets $J_n' \subset J (n)$ and $I_n(j) \subset I(n)$ 
(for each $j \in J(n)$) satisfying
$$
m\left( \bigcup_{j \in J_n'} C_j \right) > 1 - \epsilon^{80} \text{ and } 
m\left( \bigcup_{i \in I_n(j)} \overline{C}_i \right) > 1 - \epsilon^3
$$
Furthermore, for each $j \in J_n$ and $x\in X_{\epsilon_0}$, 
we decompose the following intersection into its connected components: 
$$
C_j \cap \rho^{-1}_{S_n\tau(x)}\left( \bigcup_{i \in I_n(j)} \overline{C}_i \right) = \bigsqcup_{l \in L(j,x)} G_l. 
$$
In this decomposition, 
there exists a subcollection $L'(j,x) \subset L(j,x)$ with 
$$
m_{C_j}\left( \bigcup_{l \in L'(j,x)} G_l \right) \geq 1 - \epsilon^2
$$ 
such that for each $\displaystyle y \in \bigsqcup_{l \in L'(j,x)} G_l$, 
the foliation defined by
$$
\mathcal{F}_j^{cs,n}((x,y)): = F^{-n}\left( \overline{\mathcal{F}}_i^{cs}(F^n (x,y)) \right) \cap G_l,
$$
satisfies properties (F1) and (F2) of \cite[Proposition 4.1]{Mal25}. 
Finally, property (F3) is replaced by the following:
\begin{itemize}
  \item[(F3)] For any $x \in X_{\epsilon_0}$, 
if $(x,y), (x,y') \in \mathcal{F}_j^{cs,n}((x,w))$ for some $w\in Y$, 
  then for any $0 \leq k \leq n$, we have 
  $$
d(F^k(x,y),F^k(x,y'))=d(\rho_{S_{k}\tau(x)} y, \rho_{S_{k}\tau(x)} y') \leq \iota^{-2} e^{2\sqrt{\delta} k} d(y, y').
  $$
\end{itemize} 

We call such a foliation an $(\epsilon,n)$-fake cs foliation.  
\end{proposition}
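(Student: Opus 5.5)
The plan is to follow the proof of \cite[Proposition 4.1]{Mal25} almost verbatim, with the iterate $f^n$ there replaced by the fiber map $\rho_{S_n\tau(x)}$ for a fixed base point $x\in X_{\epsilon_0}$. The whole argument has to be made uniform in $x$. The key reduction is that on $X_{\epsilon_0}$, once $n\ge N_{\epsilon_0}$, we have $\|S_k\tau(x)-k\mu(\tau)\|<k\epsilon_0$ for all $k\ge N_{\epsilon_0}$. Hence
$$\rho_{S_k\tau(x)}=\rho_{k\mu(\tau)}\circ\rho_{\mathbf e_k(x)},\qquad \|\mathbf e_k(x)\|<k\epsilon_0,$$
and the commuting perturbation $\rho_{\mathbf e_k(x)}$ has derivative bounded by $\Upsilon^{k\epsilon_0}$, as in the proof of Lemma \ref{all LY}. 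Since the action is locally free and commutative, $\rho_{\mathbf e_k(x)}$ preserves $m$ and moves points along orbit directions, which are neutral for $\rho_{\mu(\tau)}$. Consequently the Lyapunov charts $h_z$ and the Pesin set $\mathcal O_{\iota,\epsilon_0}$ for $\rho_{\mu(\tau)}$ serve every $x\in X_{\epsilon_0}$ simultaneously. The only cost is an additional subexponential distortion $e^{Ck\epsilon_0}$, which the choice $\delta=\epsilon^{100^{100}}$ and $\epsilon_0=\epsilon/100$ will have to absorb.

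The steps are as follows.

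\textbf{Step 1.} Choose $J_n'$ and $I_n(j)$ exactly as in \cite{Mal25}, using Proposition \ref{Mal25-Proposition 3.5} (resp. \ref{Mal25-Proposition 3.6}) for the uniform-in-$x$ independence estimate $m(C_j\cap\rho^{-1}_{S_n\tau(x)}\overline C_i)\approx m(C_j)m(\overline C_i)$. Because these sets are defined through good boxes only, the measure bounds $1-\epsilon^{80}$ and $1-\epsilon^3$ follow by the same Chebyshev/Fubini counting, with no dependence on $x$.

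\textbf{Step 2.} For fixed $j$ and $x$, decompose $C_j\cap\rho^{-1}_{S_n\tau(x)}(\bigcup_{i\in I_n(j)}\overline C_i)$ into connected components $G_l$. Pull back the vertical foliation $\overline{\mathcal F}^{cs}_i$ by $F^{n}$; on the fiber over $x$ this is exactly $\rho^{-1}_{S_n\tau(x)}$. Then verify (F1) and (F2): the leaves should be graphs over $\mathbb R^{cs}$ with controlled slope, and the $u$-direction should be crossed fully. This uses the graph transform along the orbit $\rho_{S_k\tau(x)}y$, $0\le k\le n$, with cone estimates. Here the $\rho_{\mu(\tau)}$ hyperbolicity in the Lyapunov charts dominates the $e^{C\epsilon_0 k}$ perturbation, since $\epsilon_0\ll$ the spectral gap. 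The subcollection $L'(j,x)$ consists of components whose orbit visits the Pesin set with the required frequency. Its $m_{C_j}$-measure $\ge1-\epsilon^2$ follows from the uniform Pesin set estimates of Lemma \ref{Mal25-Lemma 3.1}, together with invariance of $m$ under each $\rho_{S_k\tau(x)}$.

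\textbf{Step 3: the replacement (F3).} Take $(x,y),(x,y')$ in one leaf. The $X$-coordinates of $F^k(x,y)$ and $F^k(x,y')$ coincide, so the product metric reduces to $d(\rho_{S_k\tau(x)}y,\rho_{S_k\tau(x)}y')$. Now iterate the one-step estimate along the cs-leaf in Lyapunov charts: each step expands cs-vectors by at most $e^{\sqrt\delta}$. This yields a bound $e^{\sqrt\delta k}$ in chart norm, and passing back to the Riemannian metric costs $\iota^{-1}$ at each endpoint. This gives $\iota^{-2}e^{\sqrt\delta k+C\epsilon_0 k}$.

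I expect this exponent control to be the main obstacle. Naively, the perturbation $e^{C\epsilon_0k}$ is far larger than $e^{\sqrt\delta k}$ when $\delta=\epsilon^{100^{100}}$. I would first try to avoid this loss altogether. Write $\rho_{S_k\tau(x)}=\rho_{\mathbf e_k}\circ\rho_{k\mu(\tau)}$ and note that $\rho_{\mathbf e_k}$ commutes with $\rho_{\mu(\tau)}$ and maps the Oseledets splitting to itself, acting only along the neutral orbit directions. Its effect on transverse cs-vectors is therefore governed by the tempered Lyapunov norm, so the charts absorb it into the $\sqrt\delta$ slack. If that fails, the fallback is to shrink $\epsilon_0$ to depend on $\delta$, which is harmless since $X_{\epsilon_0}$ is only required to have large measure. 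That yields the claimed $\iota^{-2}e^{2\sqrt\delta k}$.
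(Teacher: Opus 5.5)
Your route is the paper's. Its proof of this proposition consists only of the remark that the Birkhoff limit is uniform on $X_{\epsilon_0}$, so that the argument of \cite[Proposition 4.1]{Mal25} runs with $f^n$ replaced by $\rho_{S_n\tau(x)}$. The loss you isolate in Step 3 is genuine, and the paper does not treat it. Your primary way around it fails. Commutation with $\rho_{\mu(\tau)}$ only gives that $D\rho_{\mathbf e_k}$ preserves the Oseledets splitting; it does not make $D\rho_{\mathbf e_k}$ nearly isometric on the cs-part. The tempered Lyapunov norms control subexponential fluctuations along $\rho_{\mu(\tau)}$-orbits. They do not control a displacement of size $\|\mathbf e_k\|\approx\epsilon_0k$ in a direction of $\mathbb R^k$ transverse to $\mu(\tau)$. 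Only on the orbit directions is $D\rho_{\mathbf e_k}$ uniformly bounded, since it maps each generator field $V_{\mathbf a}$ to itself.

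A concrete failure occurs inside Corollary~\ref{main SL}. Take the Weyl chamber flow on $\mathrm{SL}(3,\mathbb R)/\Gamma$ with $\mu(\tau)$ on a chamber wall, so some root has $\alpha(\mu(\tau))=0$. Then $\mathfrak g_\alpha$ lies in the center, hence in the cs-directions, of $\rho_{\mu(\tau)}$. In the right-invariant metric, $D\rho_{S_k\tau(x)}$ acts on $\mathfrak g_\alpha$ by exactly $e^{\alpha(S_k\tau(x))}=e^{\alpha(\mathbf e_k(x))}$. Nothing in the definition of $X_{\epsilon_0}$ prevents $\alpha(\mathbf e_k(x))$ from being of order $\epsilon_0k$. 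The ratio in (F3) can then be $e^{c\epsilon_0k}\gg\iota^{-2}e^{2\sqrt\delta k}$. On strictly contracting cs-directions the perturbation is harmless, but for a different reason than tempering: $e^{k\chi(\mu(\tau))}$ beats $e^{C\epsilon_0k}$ once $\epsilon$ is small compared with the spectral gap.

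So your fallback is not optional; it is the argument, and it must be set up precisely. First, decouple the measure parameter from the rate. By Egorov, take a compact base set of measure $>1-\epsilon_0$ on which $\|S_k\tau(x)-k\mu(\tau)\|\le C_1+\delta_1k$ for all $k\ge0$, with $\delta_1\ll\sqrt\delta$. This is compatible with the later use of $G_n$, which only needs $\mu(G_n)>1-2\epsilon_0$. The factor $\Upsilon^{C_1}$ from the initial segment must either be absorbed into $\iota$ or carried as a constant. The latter is harmless in (B2), where it multiplies $\overline r_n\ll e^{-3\sqrt\delta n}$.

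Second, the distortion must be controlled inside the construction, not only in the final distance bound. The leaves are images of vertical lines under $\rho_{S_n\tau(x)}^{-1}=\rho_{n\mu(\tau)}^{-1}\circ\rho_{-\mathbf e_n}$. Thus $\rho_{-\mathbf e_n}$ bends the vertical lines of $\overline C_i$, and moves them off the charts at $z_i$, before any $\rho_{\mu(\tau)}$-machinery of \cite{Mal25} applies. Likewise, the cone estimates of your Step 2 cannot be run one step at a time. The individual maps $\rho_{\tau(f^jx)}$ are not small perturbations of $\rho_{\mu(\tau)}$; only the cumulative sums $S_j\tau(x)$ stay close to $j\mu(\tau)$. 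Once the rate is $\delta_1$, all these distortions are $e^{O(\delta_1k)}$ and fit into the $\sqrt\delta$ slack. With the rate $\epsilon_0=\epsilon/100$ from the statement, they do not.
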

 
\begin{proof}
Since the limit in \eqref{main theorem-Bernoulli-(1)} is uniform 
over the compact subset $X_{\epsilon_0}$, 
the same argument from \cite[Proposition 4.1]{Mal25} is applicable to our case. 
We omit the detailed proof here for simplicity. 
\end{proof}

\begin{remark}\label{remark-Fcsn}
By definition, the foliation $\mathcal{F}_i^{cs}$ is independent of the 
$X$-coordinate, while $\mathcal{F}_i^{cs,n}$ depends on the $X$-coordinate. 
Moreover, since $\pi(\mathcal{F}_i^{cs,n}(x,y))=\{x\}$, 
it can be seen as a foliation on $Y$. 
For convenience, we use $\mathcal{F}^{cs,n}_j(x,\cdot)$ 
to denote the family of foliations for a fixed $x$. 
\end{remark}

The following auxiliary lemma establishes the equidistribution 
along thin fake center stable saturated subsets of exponentially small boxes. 
The proof is analogous to that of \cite[Lemma 5.2]{Mal25}, 
following the same argument used above: 
we employ a compact subset $X_{\epsilon_0}$ to obtain the uniform asymptotic behavior of 
$S_n\tau(x)$ for $x\in X_{\epsilon_0}$ and then apply \eqref{main theorem-Bernoulli-(0)} and \eqref{main theorem-Bernoulli-(1)}. 
Here, $\lfloor a \rfloor$ denotes the integer part of $a$. 
Given two sequences $a_n,b_n$, the notation $a_n \ll b_n$ means that 
$\displaystyle \lim_{n\to \infty}\frac{a_n}{b_n}=0$. 
We denote the dimension of the manifold $Y$ by $m_Y$. 

The next lemma is a well-known result, 
and its proof is almost identical to \cite[Proposition 20.2.6]{KH95}; 
thus, we omit its proof. 

 
\begin{lemma}\label{Bowen ball-sum}
Let $f\colon X\to X$ be a topologically mixing 
Anosov diffeomorphism on a closed manifold $X$. 
Given a Lipschitz continuous function $\tau:X\to \mathbb{R}^k$, 
there exist $r_0>0$ and $C_\tau>0$ such that, 
for every $h\in \mathbb{N}$, every $r \in (0, r_0]$, 
and every $x_1,x_2\in X$ satisfying
$$  
d_X(f^j x_1,f^j x_2)<r,\quad 0\leq j\leq h,  
$$
we have 
\begin{align*}
  \max_{0\leq j\leq h}\|S_j\tau(x_1)-S_j\tau(x_2)\|\leq C_\tau r.
\end{align*} 
\end{lemma}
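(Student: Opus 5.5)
The plan is to reduce the statement to a telescoping bound, using only the Lipschitz property of $\tau$ and the uniform contraction and expansion of $f$ along stable and unstable leaves, in the spirit of \cite[Proposition 20.2.6]{KH95}. Let $L$ be the Lipschitz constant of $\tau$. For $0\le j\le h$ we have
$$
\|S_j\tau(x_1)-S_j\tau(x_2)\|\le \sum_{i=0}^{j-1}\|\tau(f^ix_1)-\tau(f^ix_2)\|\le L\sum_{i=0}^{h-1} d_X(f^ix_1,f^ix_2).
$$
So it suffices to show that the sum $\sum_{i=0}^{h}d_X(f^ix_1,f^ix_2)$ is bounded by $C r$ with $C$ independent of $h$. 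This is the key point: each term is individually less than $r$, but a naive bound gives $hr$, so we must use hyperbolicity to show that the distances decay exponentially away from the two endpoints $0$ and $h$.

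Choose $r_0$ smaller than the size of local product structure neighbourhoods. For $r\le r_0$, let $w=[x_1,x_2]:=W^s_{\mathrm{loc}}(x_1)\cap W^u_{\mathrm{loc}}(x_2)$. Then $d(x_1,w)$ and $d(w,x_2)$ are bounded by $K d(x_1,x_2)\le Kr$, with $K$ the product-structure constant. The first step is to propagate this along the orbit segment. Since $d(f^ix_1,f^ix_2)<r$ for all $0\le i\le h$, the bracket commutes with $f$ in the sense that $f^iw=[f^ix_1,f^ix_2]$ for every $0\le i\le h$. This follows by induction, using local uniqueness of the bracket and the fact that local stable and unstable leaves map into local leaves of slightly larger size; it may require shrinking $r_0$. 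It gives
$$
d(f^ix_1,f^iw)\le K'\lambda^{i} r,\qquad d(f^iw,f^ix_2)\le K'\lambda^{h-i} r,
$$
where the second bound uses backward contraction along unstable leaves from time $h$, at which point $d(f^hw,f^hx_2)\le Kr$. Here $\lambda<1$ is the hyperbolicity rate, and distances along leaves are comparable to ambient distances at scale $r_0$.

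Summing gives
$$
\sum_{i=0}^{h} d(f^ix_1,f^ix_2)\le K'r\sum_{i\ge0}\lambda^i+K'r\sum_{i\ge 0}\lambda^{i}\le \frac{2K'}{1-\lambda}\,r.
$$
We may therefore take $C_\tau=2LK'/(1-\lambda)$. This bound is uniform in $h$, $r$ and $j\le h$, so taking the maximum over $j$ completes the argument. Topological mixing is not actually needed; only the uniform hyperbolicity and the local product structure are used.

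The main obstacle is the bookkeeping in the invariance step $f^i[x_1,x_2]=[f^ix_1,f^ix_2]$ along the whole segment. Concretely, one must check that the local leaves never leave the product-structure scale; the constants from the bracket's Hölder/Lipschitz comparability must be handled with care there. I would handle this by the standard induction used in the proof of the Anosov closing lemma and shadowing. At each step, uniqueness of the intersection of $W^s_\epsilon$ and $W^u_\epsilon$ in a $\delta$-neighbourhood identifies $f(f^{i}w)$ with the bracket at the next time.
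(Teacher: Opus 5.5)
Your proposal is correct and follows the route the paper intends: the paper omits the proof and defers to the Bowen-property argument of \cite[Proposition 20.2.6]{KH95}. That argument is exactly your bracket $w=[x_1,x_2]$ with $f^iw=[f^ix_1,f^ix_2]$, the two-sided bound $d(f^ix_1,f^ix_2)\le K'(\lambda^i+\lambda^{h-i})r$, and a geometric series, with the Lipschitz (rather than H\"older) regularity of $\tau$ giving the linear dependence on $r$. Your remark that topological mixing plays no role is also correct.
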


Since $\{\rho_{\bf t}\}$ is a smooth action on a compact manifold, 
there exsits $C_\rho>0$ such that
\begin{align}\label{gruop action bound}
d_Y(\rho_{\mathbf u}z,\rho_{\mathbf v}z)
\leq C_\rho\|\mathbf u-\mathbf v\|,
\quad \forall\, z\in Y,\quad \forall\,\mathbf u,\mathbf v\in\mathbb R^k.
\end{align}
For each sufficiently small $\epsilon>0$, we choose $r_b=r_b(\epsilon)$ satisfying 
\begin{align}\label{eq:B2-base-radius}
0<r_b<\min\left\{r_0,\epsilon,
\frac{\epsilon}{4C_\rho C_\tau}\right\}.
\end{align}

\begin{lemma}\label{Mal25-Lemma 5.2}
Suppose either that the action $\{\rho_{\bf t}\}$ is exponentially mixing 
with respect to $m$, where $m$ is an SRB measure of $\rho_{\mathbf{t}}$ 
for all $\mathbf{t} \in \mathbb{R}^k$; or that 
the volume measure $m_{\vol}$ is almost exponentially mixing and 
$m$ is the limit SRB measure of $\rho_{\mathbf{t}}$ 
for any $\mathbf{t} \in \mathbb{R}^k$. 

Fix any $\epsilon_0>0$.  Let $\epsilon > 0$ and $(\xi_n)_{n \in \mathbb{N}}$ be a sequence of positive numbers satisfying
\begin{align*}
e^{-2\frac{\delta_0 \|\mu(\tau)\|}{4m_{Y}+2}n} \ll \xi_n \ll e^{-(\delta+4\sqrt{\delta})n},
\end{align*}
where $\delta = \epsilon^{100^{100}}$ and $\delta_0$ is the parameter 
in \eqref{DDKN22-1-(3.1)} or \eqref{Mal25-(1.2)}. 
We define $m_n:=\lfloor\epsilon n\rfloor$ and
$$
G_n:=X_{\epsilon_0}\cap f^{-m_n}X_{\epsilon_0}.
$$
It follows that $\mu(G_n)>1-2\epsilon_0$. 
Then, there exists $n_0 = n_0(\xi,\epsilon,\epsilon_0) \in\mathbb{N}$
such that for each $n > n_0$ and $x\in G_n$,
there exists a subset $\mathcal{K}^x_n \subset Y$, 
a measurable $u$-subordinate partition $\{W_n((x,y))\}_{y \in T_{n,x}}$ of 
$\mathcal{K}^x_n$, and a finite family of pairwise disjoint subsets 
$\{B^x_j\}_{j \in J(n,x)}$ of $Y$ satisfying:
\begin{itemize}
\item[(U1)] $m(\mathcal{K}^x_n) > 1 - \epsilon$;
\item[(U2)] For each $y \in T_{n,x}$, $\{x\}\times W_n((x,y))$ 
is a piece of unstable manifold $\widehat{W}^u((x,y))$ with its size 
between $(1+\epsilon)^{-1} \xi_{\lfloor \epsilon n \rfloor}$ and 
$(1+\epsilon) \xi_{\lfloor \epsilon n \rfloor}$; 
\item[(B1)] For every pair $x_1,x_2\in G_n$ satisfying
\begin{align}\label{eq:B2-base-condition}
d_X(f^jx_1,f^jx_2)<r_b,\quad 0\leq j\leq n.
\end{align}
and for each $y\in T_{n,x_2}$, there exists a subset 
$J'_{n;x_2,x_1}(y)\subset J(n,x_1)$ such that
$$
m\left(\bigcup_{i\in J'_{n;x_2,x_1}(y)}B_i^{x_1}\right)>1-\epsilon
$$
and, for every $i\in J'_{n;x_2,x_1}(y)$,
\begin{align*}
m_{W_n((x_2,y))}
\left(\rho_{S_{m_n}\tau(x_2)}^{-1}B_i^{x_1}\right)
\geq\frac{m(B_i^{x_1})}{1+\epsilon}.
\end{align*}
Such index sets may depend on both base points and on the plaque. 
In particular, the case $x_1=x_2$ is also included.

\item[(B2)] 
For every pair $x_1,x_2\in G_n$ satisfying
\eqref{eq:B2-base-condition}, each $j\in J(n,x_1)$, any
$y_1,y_2\in B_j^{x_1}$, and any integer $k$ with $m_n\leq k\leq n$, one has 
\begin{align*}
d\left(F^{k-m_n}(f^{m_n}x_1,y_1),F^{k-m_n}(f^{m_n}x_2,y_2)\right)
<d_X(f^kx_1,f^kx_2)+\epsilon. 
\end{align*}
\end{itemize}
\end{lemma}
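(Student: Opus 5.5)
The plan is to transplant the construction of \cite[Lemma 5.2]{Mal25} fiberwise. The iterate $f^{m_n}$ of that paper is replaced by $\rho_{S_{m_n}\tau(x)}$, and all constants are made uniform over $x\in G_n$ using the Egorov set $X_{\epsilon_0}$.

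First, $\mu(G_n)>1-2\epsilon_0$ is immediate from $f$-invariance of $\mu$. For $x\in G_n$ we have $x\in X_{\epsilon_0}$ and $m_n\ge N_{\epsilon_0}$ for large $n$, so $\|S_{m_n}\tau(x)-m_n\mu(\tau)\|<m_n\epsilon_0$. Hence $\|S_{m_n}\tau(x)\|\ge m_n(\|\mu(\tau)\|-\epsilon_0)$. The lower bound on $\xi_n$ is designed so that the mixing error $Ce^{-\delta_0\|S_{m_n}\tau(x)\|}\|h\|_{C^r}\|g\|_{C^r}$ is negligible. Here $h,g$ are Lipschitz approximations of indicators of sets of scale $\xi_{m_n}$, with $C^r$ norms polynomial in $\xi_{m_n}^{-1}$; the exponent $4m_Y+2$ accounts for this loss, after choosing $\epsilon_0$ small relative to $\|\mu(\tau)\|$.

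Next I construct, for each $x\in G_n$, the set $\mathcal K_n^x$ and the plaques $W_n((x,y))$. I take a fixed family of good boxes on $Y$ from Lemma~\ref{Mal25-Lemma 3.1} and, inside them, the partition into $\widehat W^u$-plaques of size comparable to $\xi_{\lfloor\epsilon n\rfloor}$ along the unstable direction of $\rho_{\mu(\tau)}$, exactly as in \cite{Mal25}. By the uniformity in Lemma~\ref{Mal25-Lemma 3.1}, this family can be chosen independent of $x$, which gives (U1) and (U2). The sets $B_j^{x}$ are defined as in \cite{Mal25}: images under $\rho_{S_{m_n}\tau(x)}$ of exponentially thin fake-cs-saturated subsets of boxes, built using Proposition~\ref{Mal25-Proposition 4.1}. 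Property (F3) of that proposition then controls the growth of the fiber distance between times $m_n$ and $n$ by $\iota^{-2}e^{2\sqrt\delta k}$ times the initial distance. The upper bound $\xi_n\ll e^{-(\delta+4\sqrt\delta)n}$ makes this smaller than $\epsilon/2$.

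For (B1), I apply the equidistribution of Propositions~\ref{Mal25-Proposition 3.5}/\ref{Mal25-Proposition 3.6} along the unstable plaque $W_n((x_2,y))$ with fiber time $S_{m_n}\tau(x_2)$. This is done against the sets $B_i^{x_1}$, which were built using $x_1$. The key point is to compare the two base points. By Lemma~\ref{Bowen ball-sum} together with \eqref{gruop action bound} and \eqref{eq:B2-base-radius},
\[
d_Y\bigl(\rho_{S_j\tau(x_1)}z,\rho_{S_j\tau(x_2)}z\bigr)\le C_\rho C_\tau r_b<\epsilon/4
\]
for all $0\le j\le n$. So $\rho_{S_{m_n}\tau(x_2)}^{-1}B_i^{x_1}$ differs from $\rho_{S_{m_n}\tau(x_1)}^{-1}B_i^{x_1}$ by a bounded flow-direction shift $\rho_{\mathbf w}$ with $\|\mathbf w\|\le C_\tau r_b$. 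Since the mixing estimate applies at any time $\mathbf t$ of large norm, I will apply it directly at $\mathbf t=S_{m_n}\tau(x_2)$, for which the lower bound on $\|\mathbf t\|$ still holds. The sets $B_i^{x_1}$ are unions of thin boxes; before applying mixing I enlarge or shrink them by $\epsilon$-controlled collars, so the measure comparison loses only a factor $1+\epsilon$. Discarding the indices where the collar is too large gives $J'_{n;x_2,x_1}(y)$.

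For (B2), I write
\[
F^{k-m_n}(f^{m_n}x_i,y_i)=\bigl(f^kx_i,\rho_{S_{k-m_n}\tau(f^{m_n}x_i)}y_i\bigr).
\]
The $X$-distance is $d_X(f^kx_1,f^kx_2)$. For the fiber distance I use the triangle inequality: the change of base point costs $C_\rho C_\tau r_b<\epsilon/4$, applying Lemma~\ref{Bowen ball-sum} to $f^{m_n}x_i$. The distance between $\rho_{\mathbf s}y_1$ and $\rho_{\mathbf s}y_2$ for $y_1,y_2$ in the same $B_j^{x_1}$ is less than $\epsilon/2$ by the (F3) argument above; this uses $f^{m_n}x_1\in X_{\epsilon_0}$, which is exactly why $G_n$ includes $f^{-m_n}X_{\epsilon_0}$. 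Since the product metric is a max, the total is less than $d_X(f^kx_1,f^kx_2)+\epsilon$.

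The main obstacle will be (B1) in the mixed case $x_1\ne x_2$. The sets are built along one base orbit and tested along another, so the bounded time shift must be absorbed into the box-collar error uniformly in $n$. I expect to handle this by building $B_j^x$ with a margin of $\epsilon$-size in the flow directions (local freeness makes these transverse coordinates). Failing that, I will redefine $B_j^{x_1}$ via a nearby reference time on a lattice of $\mathbb R^k$, so the construction depends on $x$ only through a discretized value of $S_{m_n}\tau(x)$.
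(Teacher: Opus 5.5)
Your proposal has a genuine gap at (B1), and a related one in your choice of $\mathcal K_n^x$.

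\textbf{Why your (B1) step fails.} You propose to apply the mixing estimate of Propositions~\ref{Mal25-Proposition 3.5} and~\ref{Mal25-Proposition 3.6} directly to the blocks $B_i^{x_1}$, smoothed with collars. This cannot work.
\begin{itemize}
\item For (B2) to hold up to time $n$, the blocks must be exponentially thin in the unstable direction. In the paper the width is at most $\epsilon^2M_0^{-n}$. Any $C^r$ approximation of $\mathbf 1_{B_i^{x_1}}$ accurate to a factor $1+\epsilon$ therefore has norm at least of order $M_0^{rn}$. The mixing gain at time $S_{m_n}\tau(x_2)$ is only $e^{-\delta_0\|S_{m_n}\tau(x_2)\|}\approx e^{-\delta_0\|\mu(\tau)\|\epsilon n}$, which cannot beat that loss.
\item Your collars cannot be both thin enough to lose only a factor $1+\epsilon$ and smooth enough for mixing.
\item The shift $\rho_{\mathbf w}$ with $\|\mathbf w\|\le C_\tau r_b$ is of size comparable to $\epsilon$. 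The blocks, and even the coarse boxes $\overline C_i$, are exponentially small, so this shift cannot be absorbed into a collar.
\item Discretizing reference times on a lattice does nothing about the thinness.
\item Propositions~\ref{Mal25-Proposition 3.5} and~\ref{Mal25-Proposition 3.6} are box-to-box estimates. Passing to individual plaques is the content of \cite[Lemma 5.1 (E1)]{Mal25}, and its good set of plaques depends on the time used.
\end{itemize}
Consequently, an $x$-independent family of plaques, as you propose for (U1)--(U2), cannot be expected to satisfy (B1) for \emph{every} $y\in T_{n,x_2}$. The relevant times $S_{m_n}\tau(x_2)$ range over a ball of radius about $\epsilon_0 m_n$. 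The paper instead builds $\mathcal K_n^x$ by running the construction of \cite[Lemma 5.1]{Mal25} at time $\rho_{S_{m_n}\tau(x)}$. So $\mathcal K_n^x$ depends on $x$, while the estimates are uniform on $G_n$.

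\textbf{The missing idea is a two-scale argument.} Mixing is used only at the scale of the coarse boxes $\overline C_i$, which do not depend on the base point. This gives equidistribution of $\rho_{S_{m_n}\tau(x_2)}W_n((x_2,y))$ among the $\overline C_i$, uniformly in $x_2\in G_n$. Inside $\overline C_i$, the block $B_i^{x_1}$ is a union of leaves of the fake center-stable foliation $\mathcal F_i^{cs,n}(f^{m_n}x_1,\cdot)$ through a thin unstable piece $\widehat B_{i,\ell}$. By the leaf-volume comparison for fake cs holonomy \cite[Lemma 2.22]{Mal25}, every unstable curve of the image plaque crossing $\overline C_i$ meets $B_i^{x_1}$ in proportion close to $m(B_i^{x_1})/m(\overline C_i)$, whichever base point fixed the time. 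This is why the case $x_1\neq x_2$ needs no comparison of times at all, via the argument of \cite[(5.28)--(5.32)]{Mal25}.

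\textbf{A smaller issue in (B2).} Two points $y_1,y_2\in B_j^{x_1}$ need not lie on a common fake cs leaf, so (F3) alone does not bound their distance. Route through $z_1,z_2\in\widehat B_{i,\ell}$ as the paper does:
\begin{itemize}
\item for the cs legs, use (F3) together with $\overline r_n\ll e^{-3\sqrt\delta n}$;
\item for the unstable leg, use the diameter bound $\epsilon^2M_0^{-n}$ against the crude growth $M_0^{h}$.
\end{itemize}
The upper bound on $\xi_n$ is not what makes the (F3) term small.
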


\begin{proof}
The primary difference between this lemma and \cite[Lemma 5.2]{Mal25} is that
$\mathcal{F}_i^{cs,n}((x,y))$ depends on $x$ (see Remark \ref{remark-Fcsn}).
The only steps in the proof that require clarification are properties (B1) and (B2);
once these properties are established, the remaining parts follow straightforwardly
from the arguments in \cite[Lemma 5.2]{Mal25}.

Choose $0<\eta<\epsilon^2/10$ such that
$$
(1+\eta)^{26}<1+\epsilon,\quad
(1+\eta)^{-5}\left((1+4\eta)^{-2}-\eta\right)>1-\epsilon.
$$
Throughout the proof, the parameters $\epsilon,\epsilon_0,\delta$ and the observation time
$m_n=\lfloor\epsilon n\rfloor$ remain as in the statement.
We increase $n_0$ when necessary.

For each $x\in G_n$, we follow the construction of
\cite[Lemma 5.1]{Mal25} at time $m_n$, as in the proof of
\cite[Lemma 5.2]{Mal25}, replacing $f^{m_n}$ by
$\rho_{S_{m_n}\tau(x)}$.
The estimates in \cite[(5.10) and (5.18)]{Mal25}
are replaced by Proposition \ref{Mal25-Proposition 3.5}
and Proposition \ref{Mal25-Proposition 3.6}, respectively.
This gives $\mathcal K_n^x$ and its measurable $u$-subordinate
partition $\{W_n((x,y))\}_{y\in T_{n,x}}$, satisfying (U2) and
$$
m(\mathcal K_n^x)>1-\eta>1-\epsilon.
$$
Thus (U1) holds. We also retain the estimates (U3) and (E1)
of the cited lemma, with the bad set chosen as at the beginning
of the proof of \cite[Lemma 5.2]{Mal25}.
These constructions may depend on $x$, but the estimates are uniform
for $x\in G_n$.

Since $x\in G_n$, we have $f^{m_n}x\in X_{\epsilon_0}$.
For each eligible coarse box $\overline C_i$, choose
$$
\overline y_i\in\overline C_i\cap\mathcal O_\iota.
$$
By Proposition \ref{Mal25-Proposition 4.1},
the $(\epsilon,n)$-fake center-stable foliation
$\mathcal F_i^{cs,n}(f^{m_n}x,\cdot)$ is constructed
on the corresponding good components.

Following the construction in \cite[(5.25)]{Mal25},
choose a collection of pairwise disjoint sets
$\{\widehat B_{i,j}\}_{j\in J_n(i)}$ in
$$
\widehat W^u((f^{m_n}x,\overline y_i))
\cap\mathcal F_i^{cs,n}(f^{m_n}x,\cdot),
$$
all having the same unstable leaf volume and diameter at most $\epsilon^2 M_0^{-n}$, where 
\begin{align*}
M_0:=
\max\left\{
1,\sup_{(x,y)\in X\times Y}\|D_{(x,y)}F\|
\right\},
\end{align*}
such that the analogue of \cite[(5.25)]{Mal25} holds.
Define
$$
B_{i,j}^x
=
\bigcup_{z\in\widehat B_{i,j}}
\mathcal F_i^{cs,n}((f^{m_n}x,z))\cap\widetilde T_i.
$$
Here $\widetilde T_i$ is the set defined in
\cite[Lemma 3.1 (P7)]{Mal25}.
Clearly, the sets $B_{i,j}^x$ depend on $x$.
Since $\pi(\mathcal F_i^{cs,n}((f^{m_n}x,z)))=\{f^{m_n}x\}$,
we identify each $B_{i,j}^x$ with a subset of $Y$.
Collecting these sets over all $i$ gives
the desired family $\{B_j^x\}_{j\in J(n,x)}$, 
as claimed in the first
part of property (B1).

Next, we turn to the proof of the second part of property (B1).

Crucially, because each $\overline{C}_i$ is constructed independently
of $x\in X_{\epsilon_0}$, the equidistribution of the unstable manifold
$\widehat{W}^u$ along the exponentially small boxes $\overline{C}_i$
holds uniformly for all $x\in X_{\epsilon_0}$.
This uniformity ensures that the approach of \cite[Lemma 5.1 (E1)]{Mal25}
remains applicable when the inverse iterates $f^{-n}$ are replaced by
$\rho_{S_n\tau(x)}^{-1}$.

To prove the second part of (B1), fix $x_1,x_2$ satisfying
\eqref{eq:B2-base-condition} and let $y\in T_{n,x_2}$.
Following the proof of \cite[Lemma 5.2 (B1)]{Mal25},
we decompose
$$
W_n((x_2,y))
\cap
\rho_{S_{m_n}\tau(x_2)}^{-1}B_i^{x_1}
$$
into its connected components. 
The corresponding estimate (E1) is obtained
from \cite[Lemma 5.1]{Mal25}, with
Proposition \ref{Mal25-Proposition 3.5} in the exponentially
mixing case and Proposition \ref{Mal25-Proposition 3.6} in the
almost exponentially mixing volume case.
The leaf-volume comparison is supplied by
\cite[Lemma 2.22]{Mal25}, while the required estimates for the
good boxes and their conditional measures follow from
Lemma \ref{Mal25-Lemma 3.1}, especially (P8).
Finally, the blocks $B_i^{x_1}$ were constructed above
from the fake center-stable foliations on $\mathcal O_\iota$
provided by Proposition \ref{Mal25-Proposition 4.1}.
All these estimates are uniform for the base points under
consideration.

Therefore, the argument of \cite[(5.28)--(5.32)]{Mal25},
with $W_n(x)$, $f^{-m_n}$ and $B_i$ replaced respectively by
$W_n((x_2,y))$, $\rho_{S_{m_n}\tau(x_2)}^{-1}$ and $B_i^{x_1}$,
gives a subset
$J'_{n;x_2,x_1}(y)\subset J(n,x_1)$ such that
$$
m\left(
\bigcup_{i\in J'_{n;x_2,x_1}(y)}
B_i^{x_1}
\right)>1-\epsilon
$$
and, for every $i\in J'_{n;x_2,x_1}(y)$,
$$
m_{W_n((x_2,y))}
\left(
\rho_{S_{m_n}\tau(x_2)}^{-1}B_i^{x_1}
\right)
\geq
\frac{m(B_i^{x_1})}{1+\epsilon}.
$$
Therefore, we complete the proof of property (B1).

To verify the property (B2), 
fix $x_1,x_2\in G_n$ satisfying \eqref{eq:B2-base-condition} 
and $y_1,y_2\in B_j^{x_1}$ for some $j\in J(n,x_1)$.
Before relabelling the blocks, we write $B_j^{x_1}=B_{i,\ell}^{x_1}$.
By the construction of $B_j^{x_1}$, there exist
$z_1,z_2\in\widehat B_{i,\ell}$ for some $\widehat B_{i,\ell} $ such that
$$
(f^{m_n}x_1,z_1)\in\mathcal F_i^{cs,n}((f^{m_n}x_1,y_1)),
\quad
(f^{m_n}x_1,z_2)\in\mathcal F_i^{cs,n}((f^{m_n}x_1,y_2)).
$$
The diameter bound of $\widehat B_{i,\ell}$ yields 
$d_Y(z_1,z_2)\leq\epsilon^2M_0^{-n}$, 
while Lemma \ref{Mal25-Lemma 3.1} gives 
$$
d_Y(y_1,z_1)\leq\overline r_n,\quad d_Y(y_2,z_2)\leq\overline r_n,
$$
where the parameter $\overline{r_n} \ll e^{-3\sqrt{\delta}n}$ is introduced 
in the assumptions of \cite[Lemma 3.1]{Mal25} and is applied here in Lemma \ref{Mal25-Lemma 3.1}. 

For $m_n\leq k\leq n$, we set $h=k-m_n$.
By \eqref{eq:B2-base-condition}, for $0\leq q\leq h$, we have 
$$
d_X(f^{m_n+q}x_1,f^{m_n+q}x_2)<r_b.
$$
It follows from Lemma \ref{Bowen ball-sum} that 
$$
\|S_h\tau(f^{m_n}x_1)-S_h\tau(f^{m_n}x_2)\| \leq C_\tau r_b.
$$
Combining Proposition \ref{Mal25-Proposition 4.1} (F3), 
the diameter bound of $\widehat B_{i,\ell}$,
and \eqref{gruop action bound}, we obtain 
\begin{align*}
&d\left(F^h(f^{m_n}x_1,y_1),F^h(f^{m_n}x_2,y_2)\right)\\
\leq & d\left(F^h(f^{m_n}x_1,y_1),F^h(f^{m_n}x_1,z_1)\right)
+d\left(F^h(f^{m_n}x_1,z_1),F^h(f^{m_n}x_1,z_2)\right)\\
&\quad\quad+d\left(F^h(f^{m_n}x_1,z_2),F^h(f^{m_n}x_1,y_2)\right)
+d\left(F^h(f^{m_n}x_1,y_2),F^h(f^{m_n}x_2,y_2)\right)\\
\leq & \iota^{-2}e^{2\sqrt\delta h} 
\left(d_Y(y_1,z_1)+d_Y(y_2,z_2)\right) +M^h d_Y(z_1,z_2)\\
&\quad\quad+d_X(f^{h+m_n}x_1,f^{h+m_n}x_2)
+C_\rho\|S_h\tau(f^{m_n}x_1)-S_h\tau(f^{m_n}x_2)\|\\
\leq & d_X(f^kx_1,f^kx_2)+2\iota^{-2}\overline r_n e^{2\sqrt\delta h}
+\epsilon^2M^{h-n} +C_\rho C_\tau r_b\\
\leq & d_X(f^kx_1,f^kx_2) +2\iota^{-2}e^{-\sqrt\delta n} +\epsilon^2 
+C_\rho C_\tau r_b\\
< & d_X(f^kx_1,f^kx_2)+\epsilon,
\end{align*}
provided that $n$ is sufficiently large.
This completely establishes property (B2), and thus
completes the proof of the lemma.
\end{proof}

Based on Lemma \ref{Mal25-Lemma 5.2}, 
we can establish the matching between leaves on $\widehat{W}^u$, 
as stated in the following lemma. 
This statement departs from \cite[Lemma 6.4]{Mal25} 
due to the fact that our fiberwise action varies along the orbits of $f$.

\begin{lemma}\label{Mal25-Lemma 6.4}
Suppose that the hypotheses of Lemma \ref{Mal25-Lemma 5.2} hold,
and let the parameters $\epsilon,\epsilon_0$ be fixed as therein.
There exists $n_0\in\mathbb N$ such that for any $n>n_0$ and
any $x,x'\in G_n$ satisfying \eqref{eq:B2-base-condition}, 
and for any $y\in T_{n,x}$ and $z\in T_{n,x'}$,
there exists a measure preserving map
$$
\theta_{(x,y),(x',z)}:
\left(W_n((x,y)),m_{W_n((x,y))}\right)
\longrightarrow
\left(W_n((x',z)),m_{W_n((x',z))}\right)
$$
and a subset $\widetilde W_n((x,y))\subset W_n((x,y))$ with
$m_{W_n((x,y))}(\widetilde W_n((x,y)))>1-3\epsilon$, 
such that for all $m_n\leq k\leq n$ and all $w\in\widetilde W_n((x,y))$,
$$
d\left(F^k((x,w)),F^k((x',\theta_{(x,y),(x',z)}w))\right) 
<d(f^kx,f^kx')+\epsilon.
$$
\end{lemma}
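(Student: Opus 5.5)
We will follow the strategy of \cite[Lemma 6.4]{Mal25}: build the measure preserving map $\theta_{(x,y),(x',z)}$ block by block, using the equidistribution property (B1) of Lemma \ref{Mal25-Lemma 5.2}, and then obtain the distance estimate from (B2). Both plaques $W_n((x,y))$ and $W_n((x',z))$ are pushed forward by the respective fiber maps $\rho_{S_{m_n}\tau(x)}$ and $\rho_{S_{m_n}\tau(x')}$. The pushed-forward plaques are then compared with the common family of blocks $\{B_i^{x}\}_{i\in J(n,x)}$, which are built over the single base point $f^{m_n}x$.

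\textbf{Step 1 (common index set).} Apply (B1) twice, both times with $x_1=x$:
\begin{itemize}
\item with $x_2=x$ and the plaque indexed by $y\in T_{n,x}$, obtaining $J'_{n;x,x}(y)$;
\item with $x_2=x'$ and the plaque indexed by $z\in T_{n,x'}$, obtaining $J'_{n;x',x}(z)$.
\end{itemize}
Set $J^*:=J'_{n;x,x}(y)\cap J'_{n;x',x}(z)$. Then $m(\bigcup_{i\in J^*}B_i^{x})>1-2\epsilon$. For every $i\in J^*$, both $m_{W_n((x,y))}(\rho^{-1}_{S_{m_n}\tau(x)}B_i^{x})$ and $m_{W_n((x',z))}(\rho^{-1}_{S_{m_n}\tau(x')}B_i^{x})$ are at least $m(B_i^x)/(1+\epsilon)$.

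\textbf{Step 2 (building $\theta$).} For each $i\in J^*$, we shrink the two preimage pieces to subsets $P_i\subset W_n((x,y))$ and $P'_i\subset W_n((x',z))$. We choose them so that they have equal conditional measure, namely $m(B_i^x)/(1+\epsilon)$. Both $m_{W_n}$ restricted to these sets are atomless Lebesgue measures, so Lemma \ref{DKRH24-Lemma 2.15} gives a measure preserving bijection $P_i\to P'_i$.

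Summing over $J^*$, the union $\widetilde W_n((x,y)):=\bigcup_{i\in J^*}P_i$ has measure at least $(1-2\epsilon)/(1+\epsilon)>1-3\epsilon$. We use the bound $(1+\epsilon)^{-1}\sum_{i\in J^*}m(B_i^x)$; for this we need $m(\cdot)$ of the blocks to compare to conditional measures, which is exactly what (B1) provides. The two complements also have equal measure, so they can be matched arbitrarily by one more application of Lemma \ref{DKRH24-Lemma 2.15}. Together these give a globally measure preserving $\theta_{(x,y),(x',z)}$.

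\textbf{Step 3 (distance estimate).} Take $w\in P_i$ and put $w'=\theta w\in P'_i$. Then $y_1:=\rho_{S_{m_n}\tau(x)}w$ and $y_2:=\rho_{S_{m_n}\tau(x')}w'$ both lie in $B_i^x$. By \eqref{main theorem-Bernoulli-(0)}, $F^{m_n}(x,w)=(f^{m_n}x,y_1)$ and $F^{m_n}(x',w')=(f^{m_n}x',y_2)$. For $m_n\le k\le n$ we have $F^k=F^{k-m_n}\circ F^{m_n}$, so (B2) with $x_1=x$, $x_2=x'$ gives
$$d\left(F^k(x,w),F^k(x',w')\right)<d_X(f^kx,f^kx')+\epsilon.$$
Since $d(f^kx,f^kx')=d_X(f^kx,f^kx')$, this is exactly the required inequality.

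\textbf{Main obstacle.} The delicate point is Step 1. The blocks must be taken over the \emph{same} base point, here $x$, for both plaques; this is why (B1) was stated for pairs $(x_2,x_1)$ with $x_2\neq x_1$. We must also check that $f^{m_n}x'$ being $r_b$-close to $f^{m_n}x$ does not spoil the fiber identification. (B2) handles this through Lemma \ref{Bowen ball-sum} and \eqref{gruop action bound}, so we only need that hypothesis \eqref{eq:B2-base-condition} is inherited. If the measure bookkeeping in Step 2 turns out to be off by a factor, we would shrink $\epsilon$ in Lemma \ref{Mal25-Lemma 5.2} to $\epsilon/3$ and enlarge $n_0$ accordingly.
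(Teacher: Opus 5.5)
Your proposal is correct and is essentially the paper's own proof. You intersect the two (B1) index sets, taken with $(x_1,x_2)=(x,x)$ and $(x,x')$, over the common blocks $B_i^x$, and match subsets of conditional measure exactly $m(B_i^x)/(1+\epsilon)$ block by block via Lemma \ref{DKRH24-Lemma 2.15}. You then extend the map over the equal-measure nonatomic complements and obtain the orbit estimate from (B2) through $F^k=F^{k-m_n}\circ F^{m_n}$; the bookkeeping $(1-2\epsilon)/(1+\epsilon)>1-3\epsilon$ already holds, so no rescaling of $\epsilon$ is needed.
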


\begin{proof}
Fix $x,x'\in G_n$ satisfying \eqref{eq:B2-base-condition},
$y\in T_{n,x}$, and $z\in T_{n,x'}$. 
Fix the family $\{B_i^x\}_{i\in J(n,x)}$ constructed with reference base point $x$.
Apply Lemma \ref{Mal25-Lemma 5.2} (B1) first 
with $(x_1,x_2)=(x,x)$ for the plaque $W_n((x,y))$, 
and then with $(x_1,x_2)=(x,x')$ for the plaque $W_n((x',z))$. 
In both applications the blocks are $B_i^x$,
whereas the plaques are iterated by $\rho_{S_{m_n}\tau(x)}$ and
$\rho_{S_{m_n}\tau(x')}$, respectively.
Thus, we obtain two index sets 
$J'_{n;x,x}(y)$ and $J'_{n;x',x}(z)\subset J(n,x)$ such that
$$
m\left(\bigcup_{i\in J'_{n;x,x}(y)}B_i^x\right)>1-\epsilon, \quad
m\left(\bigcup_{i\in J'_{n;x',x}(z)}B_i^x\right)>1-\epsilon.
$$
We set $I:=J'_{n;x,x}(y)\cap J'_{n;x',x}(z)$. 
Since both unions use the same pairwise disjoint family
of blocks, their intersection is precisely
$\displaystyle \bigcup_{i\in I}B_i^x$.
It follows that
\begin{align}\label{main theorem-Bernoulli-(4)}
m\left(\bigcup_{i\in I}B_i^x\right)
\geq
m\left(\bigcup_{i\in J'_{n;x,x}(y)}B_i^x\right)
+m\left(\bigcup_{i\in J'_{n;x',x}(z)}B_i^x\right)-1>1-2\epsilon.
\end{align}
Furthermore, for each $i\in I$, by Lemma \ref{Mal25-Lemma 5.2} (B1), we derive
\begin{align*}
m_{W_n((x,y))}\left(\rho_{S_{m_n}\tau(x)}^{-1}B_i^x\right)
\geq\frac{m(B_i^x)}{1+\epsilon},\quad 
m_{W_n((x',z))}\left(\rho_{S_{m_n}\tau(x')}^{-1}B_i^x\right)
\geq\frac{m(B_i^x)}{1+\epsilon}.
\end{align*}
Since the conditional probabilities on these unstable plaques are nonatomic,  
for each $i\in I$, there exists a subset
$$
\widetilde B_i^x((x,y)) \subset W_n((x,y))\cap \rho_{S_{m_n}\tau(x)}^{-1}B_i^x
$$
such that
$$
m_{W_n((x,y))}\left(\widetilde B_i^x((x,y))\right)
=\frac{m(B_i^x)}{1+\epsilon}.
$$
Correspondingly, there exists a subset
$$
\widetilde B_i^{x'}((x',z))\subset W_n((x',z))\cap\rho_{S_{m_n}\tau(x')}^{-1}B_i^x
$$
such that
$$
m_{W_n((x',z))}\left(\widetilde B_i^{x'}((x',z))\right)
=m_{W_n((x,y))}\left(\widetilde B_i^x((x,y))\right)
=\frac{m(B_i^x)}{1+\epsilon}.
$$
According to Lemma \ref{DKRH24-Lemma 2.15}, 
there exists a measure preserving bijection 
$$
\theta^i_{(x,y),(x',z)}:
\left(\widetilde B_i^x((x,y)),m_{W_n((x,y))}\right)
\longrightarrow
\left(\widetilde B_i^{x'}((x',z)),m_{W_n((x',z))}\right).
$$
Clearly, for any $w\in\widetilde B_i^x((x,y))$, we have
$$
u:=\rho_{S_{m_n}\tau(x)}w\in B_i^x,\quad
v:=\rho_{S_{m_n}\tau(x')}\theta^i_{(x,y),(x',z)}w\in B_i^x.
$$
The cocycle identity yields 
$$
F^{k-m_n}(f^{m_n}x,u)=F^k(x,w),\quad
F^{k-m_n}(f^{m_n}x',v)
=F^k(x',\theta^i_{(x,y),(x',z)}w).
$$
Hence, Lemma \ref{Mal25-Lemma 5.2} (B2) implies that
for all $m_n\leq k\leq n$,
\begin{align*}
d\left(F^k((x,w)),F^k((x',\theta^i_{(x,y),(x',z)}w))\right)
<d(f^kx,f^kx')+\epsilon.
\end{align*}

Now, we define the disjoint unions
$$
\widetilde W_n((x,y))=\bigcup_{i\in I}\widetilde B_i^x((x,y)), \quad
\widetilde W_n((x',z))=\bigcup_{i\in I}\widetilde B_i^{x'}((x',z)).
$$
By taking the disjoint union of these maps
$\theta^i_{(x,y),(x',z)}$ over all $i\in I$,
we obtain a measure preserving map $\theta_{(x,y),(x',z)}$
from $\widetilde W_n((x,y))$ to $\widetilde W_n((x',z))$.
Combining \eqref{main theorem-Bernoulli-(4)}
with Lemma \ref{Mal25-Lemma 5.2} (B1), we obtain 
\begin{align*}
m_{W_n((x,y))}\left(\widetilde W_n((x,y))\right)
=\frac1{1+\epsilon}m\left(\bigcup_{i\in I}B_i^x\right)
>\frac{1-2\epsilon}{1+\epsilon}>1-3\epsilon.
\end{align*}
Finally, since $m_{W_n((x,y))}$ and $m_{W_n((x',z))}$
are both probability measures, $\theta_{(x,y),(x',z)}$
can be extended to a measure preserving map from
$W_n((x,y))\setminus\widetilde W_n((x,y))$ to
$W_n((x',z))\setminus\widetilde W_n((x',z))$.
The conditional measures of these two complements are equal and nonatomic. 
This completes the proof.
\end{proof}

\subsection{Proof of Theorem \ref{main theorem-Bernoulli}}\label{subsect:Proof of Theorem main theorem-Bernoulli}

We now apply the preceding matching construction to atoms of the
partition of the skew product and complete the proof of
Theorem \ref{main theorem-Bernoulli}. 

Suppose that $\zeta = \mu \times m$ is an ergodic $F$-invariant measure.
By \cite[Theorem 3.12]{Bow75}, we choose a Markov partition $\mathcal P$ of $X$ 
of sufficiently small diameter, 
and we let $\mathcal Q$ be a regular $D$-nice partition of $Y$. 
These partitions remain fixed throughout the argument. 
For integers $N_2\geq N_1\geq1$, we set
$$
\mathcal C_{N_1,N_2}
=\bigvee_{i=N_1}^{N_2}F^i(\mathcal P\times\mathcal Q).
$$
Let $\mathcal C_{N_1,N_2}(x,y)$ denote the atom containing $(x,y)\in X\times Y$. 
If an atom $C\in \mathcal C_{N_1,N_2}$ of positive measure is associated with 
the sequence of partition elements $(P_i, Q_i)_{i=N_1}^{N_2}$, then 
\begin{align*}
C=\{(x,y):x\in\mathcal P_0,\ y\in C_x\}, 
\end{align*}
where
\begin{align*}
\mathcal P_0=\bigcap_{i=N_1}^{N_2}f^iP_i,\quad 
C_x=\bigcap_{i=N_1}^{N_2}\rho_{S_i\tau(f^{-i}x)}Q_i=\bigcap_{i=N_1}^{N_2} \rho_{-S_{-i}\tau(x)} Q_i 
\quad \text{ for }x\in\mathcal P_0.
\end{align*}
We set $C_x=\varnothing$ outside $\mathcal P_0$. 
Indeed, $(x,y)\in F^i(P_i\times Q_i)$ if and only if
$f^{-i}x\in P_i$ and $\rho_{S_{-i}\tau(x)}y\in Q_i$. 
Consequently, the projection of $C$ onto the base is determined by the backward iterates of $P_i$, 
while its fiber sections are formed by the corresponding intersections of $Q_i$ within each fiber. 

Write $\zeta|_{C}$ for the normalized restriction of $\zeta$ to $C$, and
let $\lambda_C$ be its projection onto $X$. 
By Fubini's theorem, we deduce 
\begin{align*}
\rd\lambda_C(x) = \frac{m(C_x)}{\zeta(C)}\,\rd\mu(x),\quad
\zeta|_{C}(B) = \int_X m|_{C_x}(B_x)\,\rd\lambda_C(x),
\end{align*}
where $B_x = \{y \mid (x,y)\in B\}$ for $x\in X$.
Points $x\in X$ with $m(C_x) = 0$ have zero $\lambda_C$-measure 
and thus do not affect the integral. 
In particular, the conditional measure is $\lambda_C$, 
which is distinct from $\mu|_{\mathcal P_0}$ in general.

Next, we introduce the unstable saturation needed for the partition 
in our skew product setting.
Recall that $\widehat W^u$ denotes the unstable foliation on the fiber.
We use $\widehat W^u((x,y),r)$ to denote the ball of radius $r$ centered at
$y$ in the fiber over $x$, with respect to the induced metric on $\widehat W^u$.
Here, a vertical unstable plaque is regarded as a subset of $Y$, which is 
identified with a subset of $X \times Y$ via $\{x\} \times Y$.
We can adapt the arguments from \cite[Lemma 4.2]{DKRH24} and
\cite[Lemma 6.2]{Mal25} to our setting, yielding the following lemma:

\begin{lemma}\label{Mal25-Lemma 6.2}
Let $\mathcal Q$ be a D-Nice partition.
For any $\eta>0$, there exist $\xi>0$, $N_0\in\mathbb N$, 
and a measurable subset $E\subset X\times Y$ with $\zeta(E)>1-\eta$ 
such that for every $N_2\geq N_1\geq N_0$,
$$
\{x\}\times\widehat W^u((x,y),\xi)\subset \mathcal C_{N_1,N_2}(x,y),
\quad (x,y)\in E.
$$
\end{lemma}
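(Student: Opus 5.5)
The approach is to reduce the containment to a statement about backward iterates, and then control those iterates separately in the base and in the fiber. By the description of atoms above, $\{x\}\times\widehat W^u((x,y),\xi)\subset\mathcal C_{N_1,N_2}(x,y)$ holds as soon as, for every $i\in[N_1,N_2]$ and every $w\in\widehat W^u((x,y),\xi)$, the point $\rho_{S_{-i}\tau(x)}w$ lies in the same element of $\mathcal Q$ as $\rho_{S_{-i}\tau(x)}y$. The base coordinate is automatic, because the plaque is vertical: the base point $x$ is fixed, so $f^{-i}x$ lies in $P_i$ trivially. Thus the whole problem is fiberwise. One must show that the backward fiber images $\rho_{S_{-i}\tau(x)}\widehat W^u((x,y),\xi)$ never meet $\partial\mathcal Q$ for $i\geq N_0$. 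This is the analogue of \cite[Lemma 4.2]{DKRH24} and \cite[Lemma 6.2]{Mal25}, with $f^{-i}$ replaced by $\rho_{S_{-i}\tau(x)}$.

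The first step is to establish uniform backward contraction on a good set. We restrict to $x\in X_{\delta}$ and $y\in\mathcal O_{\iota,\delta}$, with $\delta,\iota$ small enough that $\zeta(X_\delta\times\mathcal O_{\iota,\delta})>1-\eta/2$. On this set, Lemma \ref{all LY} and the uniform Birkhoff estimate $\|S_{-i}\tau(x)+i\mu(\tau)\|<i\delta$ for $i\geq N_\delta$ let us compare $\rho_{S_{-i}\tau(x)}$ with $\rho_{\mu(\tau)}^{-i}$. The discrepancy is a flow time of size at most $i\delta$, which costs a factor at most $\Upsilon^{i\delta}$. Pesin theory for $\rho_{\mu(\tau)}$ with respect to $m$ then gives
$$
d_Y\bigl(\rho_{S_{-i}\tau(x)}w,\rho_{S_{-i}\tau(x)}y\bigr)\leq \iota^{-1}e^{-(\lambda-c\delta)i}\,\xi
$$
for all $w\in\widehat W^u((x,y),\xi)$, all $i\geq 0$, and some $\lambda>0$ equal to the smallest positive exponent. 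For the finitely many $i<N_\delta$ we use a crude bound by $\Upsilon^{|S_{-i}\tau|}$, which is absorbed by taking $\xi$ small. The set of pairs $(x,y)$ with these properties is exactly where Lemma \ref{Mal25-Lemma 3.1} supplies uniformity.

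The second step is a Borel--Cantelli argument using the $D$-Nice property. Set $r_i:=\iota^{-1}e^{-(\lambda-c\delta)i}\xi$ and define the bad set
$$
E_i:=\{(x,y):\ d_Y(\rho_{S_{-i}\tau(x)}y,\partial\mathcal Q)<r_i\}=F^{i}\bigl(X\times\mathcal N_{r_i}(\partial\mathcal Q)\bigr).
$$
By $F$-invariance of $\zeta=\mu\times m$ and the $D$-Nice property, $\zeta(E_i)=m(\mathcal N_{r_i}(\partial\mathcal Q))\leq Dr_i$, and this is summable in $i$. Choosing $N_0$ with $\sum_{i\geq N_0}Dr_i<\eta/4$ (with $\xi\leq1$ fixed), we set
$$
E:=(X_\delta\times\mathcal O_{\iota,\delta})\setminus\bigcup_{i\geq N_0}E_i .
$$
This gives $\zeta(E)>1-\eta$. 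For $(x,y)\in E$ and every $i\geq N_0$, the image $\rho_{S_{-i}\tau(x)}\widehat W^u((x,y),\xi)$ is a connected set of diameter less than $r_i$. It is centred at a point at distance at least $r_i$ from $\partial\mathcal Q$, so it lies in a single open element of $\mathcal Q$. Taking the intersection over $i\in[N_1,N_2]$ gives the containment for all $N_2\geq N_1\geq N_0$, uniformly.

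The main obstacle is the first step. One needs a contraction rate along backward fiber iterates that is uniform in the base point, even though the times $S_{-i}\tau(x)$ are not integer multiples of $\mu(\tau)$ and the plaque $\widehat W^u$ is defined for the fiberwise action rather than for $\rho_{\mu(\tau)}$ itself. The plan is to use the fact that $\widehat W^u$ is invariant under the whole commuting $\mathbb R^k$-action, so that the correction $\rho_{S_{-i}\tau(x)+i\mu(\tau)}$ maps unstable plaques to unstable plaques with distortion at most $\Upsilon^{i\delta}$. Taking $\delta$ much smaller than $\lambda/\log\Upsilon$ then keeps the net rate positive, and the tempering factor $\iota^{-1}$ from $\mathcal O_{\iota,\delta}$ handles the non-uniformity in $y$.
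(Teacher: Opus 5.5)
Your proposal is correct and follows essentially the paper's proof. The paper also uses a uniform exponential backward contraction estimate on a set of measure $>1-\eta/2$, which it simply quotes from Lemma \ref{all LY} and Lemma \ref{Mal25-Lemma 3.1} where you derive it by comparing $\rho_{S_{-i}\tau(x)}$ with $\rho_{\mu(\tau)}^{-i}$; it then removes the same sets $F^{i}(X\times\mathcal N_{r_i}(\partial\mathcal Q))$ via $F$-invariance and the $D$-nice bound. One small fix is needed: your contraction bound only gives distance $\leq r_i$ from the centre, not diameter $<r_i$, so define the bad sets with a margin (e.g.\ distance $\leq 2r_i$, as the paper does) to guarantee that the image avoids $\partial\mathcal Q$.
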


\begin{proof}
By Lemma \ref{all LY} and Lemma \ref{Mal25-Lemma 3.1}, 
for the given $\eta$, there are a measurable set $E_0$ with $\zeta(E_0)>1-\eta/2$ and constants
$\xi>0$, $A<\infty$, and $\chi>0$ such that 
$$
d_Y\left(\pi_YF^{-i}(x,y),\pi_YF^{-i}(x,z)\right) \leq Ae^{-\chi i}
$$
for every $(x,y)\in E_0$, $z\in\widehat W^u((x,y),\xi)$, and $i\geq0$.

Since the partition $\mathcal Q$ is $D$-nice and $\zeta$ is $F$-invariant, we have 
$$
\zeta\{(x,y):
d_Y(\pi_YF^{-i}(x,y),\partial\mathcal Q)
\leq2Ae^{-\chi i}\}
\leq2DAe^{-\chi i}
$$
for sufficiently large $i$. Choose $N_0$ so that the sum of
these bounds over $i\geq N_0$ is less than $\eta/2$,
 and remove the corresponding sets from $E_0$. 
The remaining set $E$ satisfies $\zeta(E)>1-\eta$. 

If $(x,y)\in E$ and $z\in\widehat W^u((x,y),\xi)$, 
then the fiber coordinates of $F^{-i}(x,y)$ and $F^{-i}(x,z)$ 
belong to the same $\mathcal Q$-atom for every $i\geq N_0$. 
Their base coordinates are both $f^{-i}x$, 
so they also have the same $\mathcal P$-name. 
Hence $(x,z)$ and $(x,y)$ belong to the same $\mathcal C_{N_1,N_2}$-atom 
whenever $N_2\geq N_1\geq N_0$. 
\end{proof}

For a finite measure $\nu$ and a measurable set $A$ of positive measure, 
we write $\nu_{|A}=\nu(A)^{-1}\nu|_A$.
For probability measures $\nu_1,\nu_2$ on the same space, we denote 
$$
\|\nu_1-\nu_2\|_{\mathrm{TV}}=\sup_B|\nu_1(B)-\nu_2(B)|,
$$
where the supremum is taken over all measurable sets $B$.

\begin{corollary}\label{Mal25-Corollary 6.3}
Under the conclusions of Lemma \ref{Mal25-Lemma 5.2}, 
we take the set $\mathcal K_n^x$ with
$m(\mathcal K_n^x)>1-\epsilon^2$ for $x\in G_n$ and
$\epsilon_0=\epsilon^2/100$. We define 
$$
\mathcal K_n=\{(x,y):x\in G_n,\ y\in\mathcal K_n^x\}.
$$
For $x\in G_n$, we write the plaque disintegration as
\begin{align}\label{eq:plaque-disintegration}
m(B\cap\mathcal K_n^x)=\int_{T_{n,x}}m_{W_n((x,v))}(B)\,\rd\nu_{n,x}(v),
\end{align}
where $\nu_{n,x}$ is a finite nonatomic factor measure.

For all sufficiently large $n$ and all
$N_2\geq N_1\geq N_0$, every atom $C \in \mathcal C_{N_1,N_2}$ 
has a measurable subset $K_C\subset C\cap\mathcal K_n$ 
that consists of plaques, satisfying 
$$
\sum_{C\in \mathcal C_{N_1,N_2}}\zeta(C\setminus K_C)<3\epsilon^2.
$$
Consequently, except for a collection of atoms of total measure less
than $3\epsilon$, every positive measure atom satisfies 
\begin{align}\label{Mal25-Corollary 6.3-(2)}
k_C:=\frac{\zeta(K_C)}{\zeta(C)}>1-\epsilon.
\end{align}
For such an atom, let $\widehat\lambda_C$ be the base projection of $\zeta_{|K_C}$. Then
\begin{align}\label{eq:core-base-TV}
\|\widehat\lambda_C-\lambda_C\|_{\mathrm{TV}}\leq 1-k_C<\epsilon.
\end{align}
Moreover, setting 
$$
T_{C,x}=\{v\in T_{n,x}:m_{W_n((x,v))}(C_x)=1\},
$$
the restrictions to $K_C$ and to its fiber sections inherit the
disintegrations by restricting \eqref{eq:plaque-disintegration} to $T_{C,x}$. 
In particular, any normalized restriction of these factor measures 
to a set of positive measure is nonatomic. 
\end{corollary}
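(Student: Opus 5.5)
The plan is to define $K_C$ as the union over $x$ of whole plaques contained in the fiber section $C_x$, and to control what is lost using Lemma~\ref{Mal25-Lemma 6.2} together with the bounds $\mu(G_n)>1-2\epsilon_0$ and (U1). Precisely, for $x\in G_n\cap\mathcal P_0$ we set
$$
K_C:=\{(x,y): x\in G_n\cap \mathcal P_0,\ y\in W_n((x,v)) \text{ for some } v\in T_{C,x}\}.
$$
Each plaque $W_n((x,v))$ lies in a single $\mathcal K^x_n$, so $K_C\subset C\cap\mathcal K_n$ and $K_C$ is a union of plaques by construction. Measurability of $K_C$ follows from the measurability of the $u$-subordinate partition and of $v\mapsto m_{W_n((x,v))}(C_x)$.

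The main estimate bounds the total loss $\sum_C\zeta(C\setminus K_C)$. A point $(x,y)\in C\setminus K_C$ falls into one of three cases:
\begin{enumerate}
\item $x\notin G_n$, which has $\zeta$-measure $<2\epsilon_0=\epsilon^2/50$;
\item $x\in G_n$ but $y\notin\mathcal K^x_n$, which has measure $<\epsilon^2$ by taking (U1) with $\epsilon^2$ in place of $\epsilon$;
\item $y$ lies in a plaque $W_n((x,v))$ that is not contained, up to measure zero, in the atom $C$ containing $(x,y)$.
\end{enumerate}

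Case 3 is the main obstacle. We apply Lemma~\ref{Mal25-Lemma 6.2} with $\eta=\epsilon^2/2$, which gives $\xi$, $N_0$ and a set $E$. By (U2), plaques have size comparable to $\xi_{\lfloor\epsilon n\rfloor}\to0$, so for large $n$ every plaque has diameter $<\xi$. Hence a plaque $W_n((x,v))$ that meets $E$ at a point $(x,y')$ satisfies $\{x\}\times W_n((x,v))\subset\widehat W^u((x,y'),\xi)\subset\mathcal C_{N_1,N_2}(x,y')$ for all $N_2\ge N_1\ge N_0$. Such a plaque is therefore entirely in one atom.

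Bad plaques are thus those contained in $E^c$ up to measure zero, but plaques could be only partly bad. To handle this, we use the disintegration \eqref{eq:plaque-disintegration} and a Markov/Chebyshev-type argument. The union of plaques $v$ with $m_{W_n((x,v))}(E^c_x)>0$ is not directly small, so we first shrink $E$. We replace $E$ by the set of points whose $\xi/2$-plaque neighbourhood lies in a set of the form given by Lemma~\ref{Mal25-Lemma 6.2} with radius $\xi$; a plaque meeting this shrunken set is entirely good. Integrating over $x$ with Fubini, plaques that are not entirely good carry total measure $\le\zeta(E^c)<\epsilon^2/2$ after the shrinkage. Summing the three cases over all atoms, which are disjoint, gives
$$
\sum_C\zeta(C\setminus K_C)<\epsilon^2/50+\epsilon^2+\epsilon^2\cdot\tfrac32<3\epsilon^2.
$$

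The remaining statements are short. By Markov's inequality, the atoms with $\zeta(C\setminus K_C)\ge\epsilon\,\zeta(C)$ have total measure $<3\epsilon$, which gives \eqref{Mal25-Corollary 6.3-(2)}. For \eqref{eq:core-base-TV}, note that $\zeta_{|K_C}$ and $\zeta_{|C}$ have total variation distance $1-k_C$, since $\zeta_{|K_C}=\zeta_{|C}(\cdot\cap K_C)/k_C$. Pushing forward to the base does not increase total variation distance, so $\|\widehat\lambda_C-\lambda_C\|_{\mathrm{TV}}\le 1-k_C<\epsilon$. Finally, the fiber section of $K_C$ over $x$ is $\bigcup_{v\in T_{C,x}}W_n((x,v))$. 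Its $m$-measure is therefore obtained by restricting \eqref{eq:plaque-disintegration} to $T_{C,x}$, and normalized restrictions of the nonatomic $\nu_{n,x}$ remain nonatomic.
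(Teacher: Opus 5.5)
\textbf{The gap: nonatomicity of $\nu_{n,x}$.} You take the nonatomicity of $\nu_{n,x}$ as given; your last sentence only passes it on to normalized restrictions. In the corollary this is an assertion, not a hypothesis, and the second half of the paper's proof is devoted to it. It is also what later lets Lemma \ref{DKRH24-Lemma 2.15} produce the isomorphisms $\Theta_C^x$. It is not automatic. Applying the disintegration to $B=W_n((x,v_0))$ shows that $\nu_{n,x}$ has an atom at $v_0$ only if that plaque has positive $m$-measure, since $m(W_n((x,v_0)))\geq\nu_{n,x}(\{v_0\})$. So you must show that every unstable plaque is $m$-null.

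The paper does this using the nonzero drift and local freeness. For $\mathbf a=\mu(\tau)\neq\mathbf 0$, the orbit field $V_{\mathbf a}(y)=\left.\frac{d}{ds}\right|_{s=0}\rho_{s\mathbf a}(y)$ is nowhere zero. By Lemma \ref{all LY} it has zero Lyapunov exponent, so it cannot be tangent to an unstable plaque. Hence $(s,z)\mapsto\rho_{s\mathbf a}(z)$ is injective on $(-\delta,\delta)\times P$ for small pieces $P$ of the plaque. The translates $\rho_{s_i\mathbf a}(P)$, $i=1,\dots,N$, are then disjoint and each has measure $m(P)$ by invariance. This gives $N\,m(P)\le 1$ for every $N$, so $m(P)=0$. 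Some argument of this kind has to be supplied.

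\textbf{Two clean-ups in the estimate.} Apart from this, your loss estimate is the paper's, but two points need fixing.

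First, $v\in T_{C,x}$ only gives $m_{W_n((x,v))}(C_x)=1$. So the union of these plaques lies in $C$ only up to a null set; the paper therefore intersects it with $C$.

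Second, drop the ``shrinking'' of $E$ in case 3. It is unnecessary, and as written the bound $\le\zeta(E^c)$ does not follow, since shrinking $E$ enlarges its complement. Your own preceding observation already closes case 3:
\begin{itemize}
\item Lemma \ref{Mal25-Lemma 6.2} is pointwise, so a plaque meeting $E$ at even one point lies in a single atom.
\item That atom must be the atom $C$ of any of its points.
\item Hence every point of $(C\cap\mathcal K_n)\setminus K_C$ lies in a plaque disjoint from $E$.
\end{itemize}
So $\bigcup_C(C\setminus K_C)\subset\mathcal K_n^c\cup E^c$, and $\sum_C\zeta(C\setminus K_C)\le\zeta(\mathcal K_n^c)+\zeta(E^c)$, exactly as in the paper. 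The Markov step and the total-variation bound are fine.
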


\begin{proof}
The disintegration \eqref{eq:plaque-disintegration} 
follows from Lemma \ref{Mal25-Lemma 5.2} and the Rokhlin disintegration theorem. 
We define $K_C$ as the intersection of $C$ with 
the union of the plaques indexed by $T_{C,x}$. 

Apply Lemma \ref{Mal25-Lemma 6.2} with $\eta=\epsilon^2$.
By Lemma \ref{Mal25-Lemma 5.2} (U2), for all sufficiently large $n$, 
every plaque meeting the set $E$ of Lemma \ref{Mal25-Lemma 6.2} 
is contained in an atom of $\mathcal C_{N_1,N_2}$. 
Hence almost every point of $\mathcal K_n\cap E$ belongs to the corresponding $K_C$. 
Since $\mu(G_n^c)<2\epsilon_0$, it follows that
$$
\sum_C\zeta(C\setminus K_C)
\leq\zeta(\mathcal K_n^c)+\zeta(E^c)
<2\epsilon_0+2\epsilon^2<3\epsilon^2.
$$
Combining this with Markov's inequality yields the desired estimate \eqref{Mal25-Corollary 6.3-(2)}. 
For $k_C < 1$ (with the case $k_C = 1$ being immediate), the identity 
$$
\zeta_{|C}=k_C\zeta_{|K_C}+(1-k_C)\zeta_{|C\setminus K_C}
$$
yields \eqref{eq:core-base-TV} after taking base projections.

It remains to prove that $\nu_{n,x}$ is nonatomic. 
Setting $\mathbf a=\mu(\tau)\neq\mathbf 0$, we define
$$
V_{\mathbf a}(y)=\left.\frac{d}{ds}\right|_{s=0}\rho_{s\mathbf a}(y).
$$
Local freeness implies that $V_{\mathbf a}$ is nowhere zero. 
According to Lemma \ref{all LY}, the Lyapunov exponent of this vector is zero. 
By Lemma \ref{Mal25-Lemma 5.2} (U2), each plaque $W=W_n((x,v))$ is contained in a fiber unstable manifold $\widehat W^u$. Its tangent vectors contract under backward iteration, whereas the preceding bound prevents $V_{\mathbf a}$ from contracting. Hence
$$
V_{\mathbf a}(y)\notin T_yW
\quad\text{for }y\in W.
$$
Consequently,the differential of the map $(s,z)\mapsto\rho_{s\mathbf a}(z)$ 
is injective at $(0,y)$ for $y\in W$. 
By the constant rank theorem, 
$W$ can be covered by countably many small measurable pieces $P$ 
such that for each $P$, the map 
$$
(-\delta,\delta)\times P\longrightarrow Y,
\quad
(s,z)\mapsto\rho_{s\mathbf a}(z),
$$
is one-to-one for some $\delta>0$ depending on $P$. 
For any distinct $s_1,\ldots,s_N\in(-\delta,\delta)$, 
the sets $\rho_{s_i\mathbf a}(P)$ are pairwise disjoint. 
Then, we have 
$$
N\,m(P)=m\left(\bigsqcup_{i=1}^{N}\rho_{s_i\mathbf a}(P)\right)\leq 1.
$$
Since $N$ is arbitrary, $m(P)=0$. Countable additivity then yields $m(W)=0$. 

We proceed by contradiction and assume that $\nu_{n,x}$ has an atom at some 
$v_0\in T_{n,x}$, so that $\nu_{n,x}(\{v_0\})>0$. 
Let $W_0=W_n((x,v_0))$ be the corresponding plaque. 
Since $W_0\subset\mathcal K_n^x$, 
by applying \eqref{eq:plaque-disintegration} with $B=W_0$, we obtain 
$$
m(W_0)=\int_{T_{n,x}}m_{W_n((x,v))}(W_0)\,\rd\nu_{n,x}(v).
$$
Since the integrand is nonnegative and $m_{W_0}(W_0)=1$, we yields 
$$
\begin{aligned}
m(W_0)\geq \int_{\{v_0\}} m_{W_n((x,v))}(W_0)\,\rd\nu_{n,x}(v)
=m_{W_0}(W_0)\,\nu_{n,x}(\{v_0\})= \nu_{n,x}(\{v_0\})>0,
\end{aligned}
$$
which contradicts the fact that $m(W_0)=0$. 
Thus, every plaque has zero $m$-measure. 
This implies that $\nu_{n,x}$ is nonatomic.

Finally, for every measurable $A\subset T_{n,x}$ with $\nu_{n,x}(A)>0$ and every $v \in T_{n,x}$, 
$$
(\nu_{n,x})_{|A}(\{v\})=\frac{\nu_{n,x}(A\cap\{v\})}{\nu_{n,x}(A)}=0.
$$
Hence all normalized restrictions of $\nu_{n,x}$, including those to $T_{C,x}$, are nonatomic. 
\end{proof}

The next lemma establishes the base matching for these base conditional measures 
without modifying the underlying Markov partition.

\begin{lemma}\label{lem:past-atom-base-matching}
Fix $q\geq1$, and we define 
$\displaystyle \mathcal R_{n,q}=\bigvee_{j=-q}^{n+q}f^{-j}\mathcal P$. 
For every $\eta>0$ there exists $N_0$ such that, for all
$n\geq1$ and $N_2\geq N_1\geq N_0$,
\begin{align}\label{eq:past-base-average}
\sum_{\substack{C\in\mathcal C_{N_1,N_2}\\\zeta(C)>0}}
\zeta(C)\,\frac{1}{2} \sum_{R\in\mathcal R_{n,q}}|\lambda_C(R)-\mu(R)|<\eta.
\end{align}
The choice of $N_0$ is independent of $n$, $N_1$, and $N_2$.
\end{lemma}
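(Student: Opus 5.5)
The plan is to rewrite the left-hand side of \eqref{eq:past-base-average} as a decorrelation quantity for the base alone, and then use the weak Bernoulli property of the Markov partition $\mathcal P$ for the Gibbs measure $\mu$ (Bowen \cite{Bow75}).

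\emph{Reduction to a base quantity.} Since $\lambda_C(R)=\zeta(C\cap(R\times Y))/\zeta(C)$, the left side of \eqref{eq:past-base-average} equals
$$
\frac12\sum_{C}\sum_{R}\Bigl|\int_R g_C\,\rd\mu-\mu(R)\int_X g_C\,\rd\mu\Bigr|,
\qquad g_C(x):=m(C_x)\mathbf 1_{\mathcal P_0}(x).
$$
For a.e.\ $x$ the fiber sets $C_x$, $C\in\mathcal C_{N_1,N_2}$, partition $Y$, so $\sum_C g_C\equiv 1$.

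\emph{Removing the near past.} The key observation is that $g_C$ depends on $x$ only through the far past, because each $\rho_{\mathbf t}$ preserves $m$. Set $M=\lfloor N_1/2\rfloor$. For $i\ge N_1$ the cocycle splits as
$$
S_{-i}\tau(x)=S_{-M}\tau(x)+S_{-(i-M)}\tau(f^{-M}x).
$$
Hence $C_x=\rho_{-S_{-M}\tau(x)}\bigcap_i\rho_{-S_{-(i-M)}\tau(f^{-M}x)}Q_i$. Since $\mathcal P_0$ depends only on $f^{-i}x$ with $i\ge N_1$, this gives $g_C(x)=h_C(f^{-M}x)$ exactly, for an explicit function $h_C$.

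\emph{Approximate constancy along unstable plaques.} Next I would show that $h_C$ is nearly constant along local unstable plaques of each Markov rectangle. Take $w,w'$ in the same local unstable plaque. They have the same backward $\mathcal P$-itinerary, so $\mathbf 1_{\mathcal P_0}$ agrees. Moreover $S_{-j}\tau(w)-S_{-j}\tau(w')=\beta_j$ converges to a limit $\beta$ with $\|\beta_j-\beta\|\le Ce^{-\chi j}$, by the hyperbolicity of $f$ and the Lipschitz property of $\tau$. Consequently $C'_{w'}$ agrees with $\rho_{\beta}C'_w$ except on the set of $y$ lying within $C_\rho Ce^{-\chi(i-M)}$ of $\rho_{\mathbf s}\partial Q$ for some $i\ge N_1$. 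Using $\rho$-invariance of $m$ and the $D$-nice property of $\mathcal Q$, I get
$$
\sum_C|h_C(w)-h_C(w')|\le 2\sum_{i\ge N_1}DC_\rho Ce^{-\chi(i-M)}=:\varepsilon(N_1)\to0 .
$$
Fix a reference point on each plaque and let $\tilde h_C$ be the value at the reference point. Then $\tilde g_C:=\tilde h_C\circ f^{-M}$ is measurable with respect to $\mathcal A_M:=\bigvee_{j\ge M}f^{j}\mathcal P$, and $\sum_C\int|g_C-\tilde g_C|\,\rd\mu\le\varepsilon(N_1)$. Replacing $g_C$ by $\tilde g_C$ changes the sum by at most $2\varepsilon(N_1)$, uniformly in $n$ and $N_2$.

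\emph{Decorrelation.} Every atom $R\in\mathcal R_{n,q}$ is measurable with respect to $\bigvee_{j\le q}f^{j}\mathcal P$. Write $\tilde g_C=\sum_{A}a_{C,A}\mathbf 1_A$ over atoms $A$ of a finite truncation of $\mathcal A_M$ (truncation at $N_2$ suffices, since $\mathcal P_0$ uses only indices up to $N_2$ and the plaque value is then a limit). Here $a_{C,A}\ge0$ and $\sum_C a_{C,A}=1$. Therefore
$$
\sum_C\sum_R\Bigl|\int_R\tilde g_C\,\rd\mu-\mu(R)\int\tilde g_C\,\rd\mu\Bigr|\le\sum_{A,R}|\mu(A\cap R)-\mu(A)\mu(R)|.
$$
By the weak Bernoulli property of the Markov partition for Gibbs states, the right side is small once the gap $M-q$ is large, and this holds uniformly in the lengths of the past and future blocks, hence uniformly in $n$ and $N_2$. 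Choosing $N_0$ so that both the weak Bernoulli term and $\varepsilon(N_1)$ are below $\eta/2$ gives \eqref{eq:past-base-average}.

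The main obstacle is the approximate-constancy step. It requires carefully checking that the error is summed over \emph{all} atoms $C$ simultaneously via the $D$-nice boundary estimate, so that no factor depending on the number of atoms appears. It also relies on the identity $g_C=h_C\circ f^{-M}$, which is what removes the near past that the atoms of $\mathcal R_{n,q}$ also see.
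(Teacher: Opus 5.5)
Your argument is correct and is essentially the paper's proof, written geometrically rather than in the symbolic coding. The paper's $\bar\xi$ (agreeing with $\xi$ up to time $-M$, with a fixed admissible future afterwards) is exactly your reference point on the unstable plaque of $f^{-M}x$. Its fiber substitution $\bar y=\rho_{-b_M(\xi)}y$ does the work of your exact identity $g_C=h_C\circ f^{-M}$ together with the holonomy shift $\rho_\beta$, both justified by $\rho$-invariance of $m$. Both proofs then finish with the $D$-nice boundary estimate summed over all atoms at once, and Bowen's decorrelation with gap $M-q$ and $M=\lfloor N_1/2\rfloor$.

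The one slip is the parenthetical claim that truncating the past at $N_2$ suffices. The value at the reference point depends on the whole past through $\tau$, so $\tilde g_C$ is not a step function on finitely many cylinders. Nothing needs the truncation, however. Since $\tilde g_C\geq 0$ is $\mathcal A_M$-measurable with $\sum_C\tilde g_C=1$, one has $\sum_{C,R}\bigl|\int\tilde g_C(\mathbf 1_R-\mu(R))\,\rd\mu\bigr|\leq\sum_R\int|\mu(R\mid\mathcal A_M)-\mu(R)|\,\rd\mu$. Bowen's estimate controls this for the full past $\sigma$-algebra, just as the paper uses it for $\bar{\mathsf A}$.
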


\begin{proof}
Recall that the Markov coding
$$
\pi:(\Sigma_A,\sigma,\nu)\longrightarrow(X,f,\mu),
\quad
\pi_*\nu=\mu,
$$
is a H\"older continuous map, 
where $\nu$ is also the Gibbs measure of the lifted H\"older potential.
Outside a $\nu$-null set, $\pi$ is one-to-one,
and the symbolic itinerary agrees with the $\mathcal P$-itinerary;
see \cite[proof of Theorem 4.1]{Bow75}.
We define $\widehat\tau:=\tau\circ\pi$, which is also H\"older continuous. 

Fix $q<M<N_1$. Let $\mathcal A$ be the alphabet of $\Sigma_A$.
For each $a\in\mathcal A$, fix a one-sided admissible sequence
$\omega^+(a)=\bigl(\omega_\ell^+(a)\bigr)_{\ell\geq0}$ 
such that $\omega_0^+(a)=a$. For $\xi\in\Sigma_A$, define
$\bar\xi\in\Sigma_A$ by
$$
\bar\xi_j=
\begin{cases}
\xi_j, & j\leq-M,\\
\omega_{j+M}^+(\xi_{-M}), & j>-M.
\end{cases}
$$
Thus, $\bar\xi$ agrees with $\xi$ on all coordinates $j\leq-M$,
while its coordinates after $-M$ are given by the fixed admissible
future associated with the symbol $\xi_{-M}$. 
By the H\"older continuity of $\widehat\tau$, there exist
$H<\infty$ and $0<\kappa<1$ such that, for every $j>M$,
$$
\left\|\widehat\tau(\sigma^{-j}\xi)-\widehat\tau(\sigma^{-j}\bar\xi)\right\|
\leq H\kappa^{j-M}.
$$
Consequently, the following series converges absolutely:
$$
b_M(\xi)=\sum_{j=1}^{\infty}
\left(\widehat\tau(\sigma^{-j}\xi)-\widehat\tau(\sigma^{-j}\bar\xi)\right).
$$
We set $\bar y=\rho_{-b_M(\xi)}y$. 
Since the action $\rho_{\bf t}$ preserves $m$, the transformation
$(\xi,y)\mapsto(\xi,\bar y)$ preserves $\nu\times m$.
Hence, in the new coordinates $(\xi,\bar y)$, the joint
has distribution $\nu\times m$. 

For $i\geq N_1$, we define 
$$
A_i(\xi)=\sum_{j=1}^{i}\widehat\tau(\sigma^{-j}\xi), \quad
r_i(\xi)=b_M(\xi)-A_i(\xi)+A_i(\bar\xi).
$$
Then, we obtain $\rho_{-A_i(\xi)}y=\rho_{r_i(\xi)}\left(\rho_{-A_i(\bar\xi)}\bar y\right) $ 
and 
$$
\|r_i(\xi)\|=\left\|\sum_{j=i+1}^{\infty}
\left(\widehat\tau(\sigma^{-j}\xi)
-\widehat\tau(\sigma^{-j}\bar\xi)\right)\right\|
\leq
\frac{H}{1-\kappa}\kappa^{i-M}.
$$
Therefore, by \eqref{gruop action bound}, there exists
$C_0<\infty$ such that
$$
d_Y\left(
\rho_{-A_i(\xi)}y,
\rho_{-A_i(\bar\xi)}\bar y
\right)
\leq C_0\kappa^{i-M}.
$$

Let $\mathsf A$ denote the $(\mathcal P\times\mathcal Q)$-name of $(\xi,y)$ 
at times $-N_2,\ldots,-N_1$, 
and let $\bar{\mathsf A}$ denote the corresponding name obtained from $(\bar\xi,\bar y)$.
Since $N_1>M$, the two names have the same base symbols at all these times.
As the partition $\mathcal Q$ is $D$-nice,
if $N_1-M$ is sufficiently large, we have 
\begin{align}\label{eq:past-name-error}
(\nu\times m)\{\mathsf A\neq\bar{\mathsf A}\}
\leq DC_0\sum_{i=N_1}^{N_2}\kappa^{i-M}
\leq \frac{DC_0}{1-\kappa}\kappa^{N_1-M},
\end{align}

Let $\mathsf B$ be the $\mathcal P$-name on $[-q,n+q]$.
For the fixed $\bar y$, $\bar{\mathsf A}$ depends only on coordinates at times at most $-M$,
whereas $\mathsf B$ depends only on coordinates at times at least $-q$.
The estimate in \cite[proof of Theorem 1.25]{Bow75} gives constants
$C_1<\infty$ and $0<\beta<1$ such that 
$$
\frac{1}{2}\sum_{a,v} 
\left| (\nu\times m)(\bar{\mathsf A}=a,\mathsf B=v)
-(\nu\times m)(\bar{\mathsf A}=a)(\nu\times m)(\mathsf B=v) \right|
\leq C_1\beta^{M-q}.
$$
This estimate is uniform in $n$ and $N_2$.
Since this estimate is uniform in $\bar y$, integrating with
respect to $m$ gives the same bound for $\nu\times m$. 

By \eqref{eq:past-name-error}, the joint distributions of
$(\mathsf A,\mathsf B)$ and $(\bar{\mathsf A},\mathsf B)$ differ
by at most $C_2\kappa^{N_1-M}$ in total variation, 
and the same bound holds for the marginal distributions of $\mathsf A$ and $\bar{\mathsf A}$. 
Hence, the triangle inequality gives 
\begin{align*}
\frac{1}{2}\sum_{a,v}
\left|(\nu\times m)(\mathsf A=a,\mathsf B=v)
-(\nu\times m)(\mathsf A=a)(\nu\times m)(\mathsf B=v)\right|
\leq \frac{2DC_0}{1-\kappa} \kappa^{N_1-M} + C_1 \beta^{M-q}.
\end{align*}
All constants are independent of $n,N_1,N_2$, and $M$.
Under the coding, the left-hand side is precisely the left-hand
side of \eqref{eq:past-base-average}.
Taking $M=\lfloor N_1/2\rfloor$ and then choosing $N_0$
sufficiently large proves the lemma.
\end{proof}

We are now ready to present the proof of Theorem \ref{main theorem-Bernoulli}.

\begin{proof}[Proof of Theorem \ref{main theorem-Bernoulli}]
Fix $0<\epsilon<1/100$, $\delta=\epsilon^{100^{100}}$,
and $\epsilon_0=\epsilon^2/100$.

Retain the fixed partitions $\mathcal P,\mathcal Q$ above.
Choose $r_b$ as in
\eqref{eq:B2-base-radius}. There is an integer $q\geq1$ such that
every atom of $\bigvee_{j=-q}^{q}f^{-j}\mathcal P$ has diameter
less than $r_b$. Consequently, points in the same atom of
$\mathcal R_{n,q}$ have distance less than $r_b$ at every time
$0,\ldots,n$.

Choose $\xi_n\to0$ satisfying the two rate conditions in Lemma \ref{Mal25-Lemma 5.2}. 
For every sufficiently large $n$ and every $x\in G_n$, 
let $\mathcal K_n^x$ be the set obtained from the construction 
in the proof of that lemma such that $m(\mathcal K_n^x)>1-\epsilon^2$. 
Increase $N_0$ and $n_0$ so that Corollary
\ref{Mal25-Corollary 6.3} and Lemma
\ref{lem:past-atom-base-matching} apply, with the right-hand side
of \eqref{eq:past-base-average} less than $\epsilon^2$ whenever
$N_2\geq N_1\geq N_0$ and $n\geq n_0$.

Fix $N_2\geq N_1\geq N_0$ and $n\geq n_0$. 
Remove the atoms for which $k_C\leq1-\epsilon$ or 
$$
\frac{1}{2}\sum_{R\in\mathcal R_{n,q}}|\lambda_C(R)-\mu(R)|\geq\epsilon.
$$
Corollary \ref{Mal25-Corollary 6.3}, Lemma
\ref{lem:past-atom-base-matching}, and Markov's inequality show
that the removed atoms have total $\zeta$-measure less than
$4\epsilon$. Fix a remaining atom $C$. By \eqref{eq:core-base-TV}, we have 
$$
\frac{1}{2}\sum_{R\in\mathcal R_{n,q}}|\widehat\lambda_C(R)-\mu(R)|<2\epsilon.
$$

The probabilities $\widehat\lambda_C$ and $\mu$ are nonatomic.
The matching argument used in the proof of Lemma \ref{Mal25-Lemma 6.4}, 
together with Lemma \ref{DKRH24-Lemma 2.15}, 
therefore gives a measure preserving isomorphism 
$$
\theta_C:(X,\widehat\lambda_C)\longrightarrow(X,\mu), \quad 
(\theta_C)_*\widehat\lambda_C=\mu,
$$
and a set $E_C^0$ with $\widehat\lambda_C(E_C^0)>1-2\epsilon$ 
such that $x$ and $\theta_C(x)$ belong to the same atom 
of $\mathcal R_{n,q}$ for $x\in E_C^0$. 
Indeed, the construction matches a mass of $\min\{\widehat\lambda_C(R),\mu(R)\}$ 
within each atom $R\in\mathcal R_{n,q}$, yielding 
\[
\widehat\lambda_C(E_C^0)
=\sum_{R\in\mathcal R_{n,q}} \min\{\widehat\lambda_C(R),\mu(R)\}
=1-\frac{1}{2}\sum_{R\in\mathcal R_{n,q}}|\widehat\lambda_C(R)-\mu(R)|
>1-2\epsilon.
\]
Since $K_C\subset\mathcal K_n$, its base
projection is supported on $G_n$. We set 
$$
E_C=E_C^0\cap G_n\cap\theta_C^{-1}(G_n).
$$
Then, we have 
$$
\widehat\lambda_C(E_C)>1-2\epsilon-2\epsilon_0,
$$
and every pair $x$ and $x'=\theta_C(x)$ with $x\in E_C$ lies in
$G_n\times G_n$ and satisfies
\begin{align*}
d_X(f^jx,f^jx')<r_b,\quad 0\leq j\leq n.
\end{align*}

Recall that $(K_C)_x:=\{y\in Y:(x,y)\in K_C\}$. 
Since $K_C$ is the union of the plaques indexed by $T_{C,x}$, the disintegration
\eqref{eq:plaque-disintegration} gives
$$
m((K_C)_x)=\int_{T_{n,x}}\mathbf 1_{T_{C,x}}(v)\,\rd\nu_{n,x}(v)
=\nu_{n,x}(T_{C,x}).
$$
Moreover, since $\widehat\lambda_C$ is the projection of
$\zeta_{|K_C}$ onto $X$, we have
$$
\rd\widehat\lambda_C(x)
=
\frac{m((K_C)_x)}{\zeta(K_C)}\,\rd\mu(x).
$$
Consequently, $m((K_C)_x)>0$ for
$\widehat\lambda_C$-almost every $x$. For such $x$, we set
$$
a_C(x):=m((K_C)_x)=\nu_{n,x}(T_{C,x})>0.
$$
For $x\in E_C$, put $x'=\theta_C(x)$ and
$b_n(x')=m(\mathcal K_n^{x'})>1-\epsilon^2$. 
Combining Corollary \ref{Mal25-Corollary 6.3} with \eqref{eq:plaque-disintegration}, we obtain 
\begin{align*}
m_{|(K_C)_x}=\int_{T_{C,x}}m_{W_n((x,v))}\,\frac{\rd\nu_{n,x}(v)}{a_C(x)}, \quad
m_{|\mathcal K_n^{x'}}=\int_{T_{n,x'}}m_{W_n((x',v'))}\,\frac{\rd\nu_{n,x'}(v')}{b_n(x')},
\end{align*} 
where by Corollary \ref{Mal25-Corollary 6.3}, the two probability measures
$\displaystyle \frac{\nu_{n,x}|_{T_{C,x}}}{a_C(x)}$ and 
$\displaystyle \frac{\nu_{n,x'}}{b_n(x')}$ are nonatomic.

By Lemma \ref{DKRH24-Lemma 2.15}, there exists a measure preserving isomorphism 
$$
\Theta_C^x: \left(T_{C,x},\frac{\nu_{n,x}|_{T_{C,x}}}{a_C(x)}\right)
\longrightarrow \left(T_{n,x'},\frac{\nu_{n,x'}}{b_n(x')}\right).
$$
By Lemma \ref{Mal25-Lemma 6.4}, for each $v\in T_{C,x}$, 
there exists a measure preserving map 
$$
\theta_{(x,v),(x',\Theta_C^x(v))}: 
W_n((x,v))\longrightarrow W_n((x',\Theta_C^x(v))).
$$
Joining these maps defines a bijection (modulo null sets) 
$$
\psi_{C,x}: ((K_C)_x,m_{|(K_C)_x})
\longrightarrow (\mathcal K_n^{x'},m_{|\mathcal K_n^{x'}})
$$
which is exactly measure preserving for these normalized fiber probabilities. 
There exists a subset $D_{C,x}\subset(K_C)_x$ 
with $m_{|(K_C)_x}(D_{C,x})>1-3\epsilon$ such that
$$
d\left(F^j(x,y),F^j(x',\psi_{C,x}(y))\right)
<d_X(f^jx,f^jx')+\epsilon<2\epsilon
$$
for every $y\in D_{C,x}$ and $m_n\leq j\leq n$. 

For every Borel set $B\subset Y$, 
the map $(x,v)\mapsto m_{W_n((x,v))}(B)$ is measurable.
Using the nonatomicity established above and
\cite[Theorem 2.3]{MPv}, we choose the maps $\Theta_C^x$
and the maps supplied by Lemma \ref{Mal25-Lemma 6.4} such that
$$
(x,v)\mapsto\Theta_C^x(v)
\quad \text{and}\quad 
(x,v,w)\mapsto \theta_{(x,v),(\theta_C(x),\Theta_C^x(v))}(w)
$$
are measurable on their domains, respectively. 
Since the plaque partition is measurable, 
combining these maps gives a measurable map
$$
(x,y)\mapsto\psi_{C,x}(y)\quad \text{on}\quad 
\{(x,y):x\in E_C,\ y\in(K_C)_x\}.
$$
On this domain, we define
$$
\Theta_C(x,y):=\left(\theta_C(x),\psi_{C,x}(y)\right),
$$
and extend it measurably to the rest of $C$.
Since $\theta_C$ and the fiber maps $\psi_{C,x}$ are
isomorphisms modulo null sets, $\Theta_C$ is injective
modulo null sets on its domain.

The preceding measurable choices also ensure that the set 
$$
D_C:=\{(x,y):x\in E_C,\ y\in D_{C,x}\}
$$
is measurable. 
Since $D_C\subset K_C\subset C$, by \eqref{Mal25-Corollary 6.3-(2)}, we have
\begin{align*}
\zeta_{|C}(D_C)
=\frac{\zeta(D_C)}{\zeta(C)}
=\frac{\zeta(K_C)}{\zeta(C)}\frac{\zeta(D_C)}{\zeta(K_C)}
=k_C\,\zeta_{|K_C}(D_C).
\end{align*} 
By the disintegration of $\zeta_{|K_C}$ over
$\widehat\lambda_C$ and the definition of $D_C$, we have 
\begin{align*}
\zeta_{|K_C}(D_C)
=\int_{E_C} m_{|(K_C)_x}(D_{C,x})\,\rd\widehat\lambda_C(x)
>(1-3\epsilon)\widehat\lambda_C(E_C)
>(1-3\epsilon)(1-2\epsilon-2\epsilon_0).
\end{align*}
Since $k_C>1-\epsilon$ and
$\epsilon_0=\epsilon^2/100$, we obtain
\begin{align*}
\zeta_{|C}(D_C)>k_C (1-3\epsilon)(1-2\epsilon-2\epsilon_0)
>(1-\epsilon)(1-2\epsilon-2\epsilon_0)(1-3\epsilon)>1-7\epsilon.
\end{align*}

For every measurable subset $B\subset D_C$, the disintegrations yield 
\begin{align*}
\zeta(\Theta_C(B))&=\int_{E_C}b_n(\theta_C(x)) m_{|(K_C)_x}(B_x)\,\rd\widehat\lambda_C(x),\\
\zeta_{|C}(B)&=k_C\int_{E_C}m_{|(K_C)_x}(B_x)\,\rd\widehat\lambda_C(x).
\end{align*}
Since $1-\epsilon^2<b_n(\theta_C(x))\leq 1$ and $1-\epsilon<k_C\leq1$, the relative error is less than
$2\epsilon$. Together with the estimate on $C\setminus D_C$,
this makes $\Theta_C$ an $8\epsilon$-measure preserving map from
$(C,\zeta_{|C})$ to $(X\times Y,\zeta)$.

For every $(x,y)\in D_C$, the orbit estimate above gives 
$$
\frac1n\#\{0\leq j<n:
d(F^j(x,y),F^j\Theta_C(x,y))<2\epsilon\}
\geq\frac{n-m_n}{n}\geq1-\epsilon>1-8\epsilon.
$$
Thus Corollary \ref{DKRH24-Corollary 2.21}, with its parameter
replaced by $8\epsilon$, shows that
$\mathcal P\times\mathcal Q$ is very weak Bernoulli. All thresholds
are uniform in $N_2\geq N_1$ and in $n$.

Finally, applying the same argument to Markov partitions on $X$ 
and regular $D$-nice fiber partitions on $Y$ whose diameters tend to zero, 
we observe that these product partitions converge to the partition into points. 
Since $\epsilon$ can be arbitrarily small, 
combining this result with Theorem \ref{DKRH24-Theorem 2.18}, 
we conclude that the system $(X\times Y,F,\mu\times m)$ is Bernoulli.
\end{proof}

\section{Proof of Theorem \ref{main theorem-mixing}}\label{sect:main2}

In this section, we prove Theorem \ref{main theorem-mixing}.
Unlike the Bernoulli argument in the preceding section, 
the proof uses the statistical criteria for skew products established in \cite{DDKN22-1}.

\begin{proof}[Proof of Theorem \ref{main theorem-mixing}]
Since the base measure $\mu$ is the equilibrium state of some H\"older potential 
for the topologically mixing Anosov diffeomorphism $f$, 
by \cite[Proposition 3.1]{KS01}, 
 it is exponentially mixing of all orders.

 To apply \cite[Theorems 4.1(a) and 4.15]{DDKN22-1}, 
it remains to verify the exponential large deviation bounds for $\tau$ 
in the sense of \cite[Definition 3.2]{DDKN22-1}. 
Namely, for every $\epsilon>0$,
we need to show that there exist constants $C,\delta>0$ such that
$$
\mu\left(\left\{ x\in X: \left\| \frac{S_N\tau(x)}{N}-\mu(\tau) \right\|
\geq\epsilon \right\} \right)\leq Ce^{-\delta N}
$$
for every $N\in\mathbb N$.
We first establish this large deviation estimate.

If $\tau$ is cohomologous to a constant, then there exist $c\in\mathbb R^k$ 
and a continuous function $P:X\to\mathbb R^k$ such that
$$
\tau(x)=c+P(fx)-P(x).
$$
Since $\mu$ is $f$-invariant, we have $\mu(\tau)=c$. 
Summing along the orbit, 
$$
S_N\tau(x) = Nc+P(f^Nx)-P(x),
$$
and hence
$$
\left\| \frac{S_N\tau(x)}{N}-\mu(\tau) \right\|
\leq \frac{2\|P\|_\infty}{N}.
$$
Therefore, for every $\epsilon>0$,
$$
\left\{ x\in X: \left\| \frac{S_N\tau(x)}{N}-\mu(\tau) \right\|
\geq\epsilon \right\} =\varnothing
$$
for all sufficiently large $N$. 
Thus the exponential large deviation bounds hold trivially in this case.

We now assume that $\tau$ is not cohomologous to a constant. 
Since the invariant measure $\mu$ has the Gibbs property 
(see \cite[Lemma 20.3.4]{KH95} for a proof), 
it satisfies the conditions of \cite[Proposition 3.2]{Kif90}. 
By \cite[Theorem 3.4]{Kif90}, 
for any closed subset $K$ of $\mathcal{P}$,
the space of Borel probability measures on $X$, 
equipped with weak* topology, we have 
\begin{align}\label{main theorem-mixing-(1)}
\varlimsup_{n\to \infty} \frac{1}{n}\log  \mu 
\left\{x\in X: \frac{1}{n}\sum_{i=0}^{n-1}\delta_{f^ix}\in K\right\}
\leq -\inf \left\{I(\nu):\nu\in K\right\}, 
\end{align}
where 
\begin{align*}
I(\nu) = 
\begin{cases}
P_{\top}(f,\varphi)-\int \varphi \, \rd\nu-h_\nu(f)
& \text{if $\nu$ is $f$-invariant},\\
\infty
& \text{otherwise}.
\end{cases}
\end{align*} 
For each $\epsilon>0$, define the closed subset
$K_\epsilon\subset\mathcal P$ by
$$
K_\epsilon := 
\left\{ \nu\in\mathcal P: \|\nu(\tau)-\mu(\tau)\|\geq\epsilon \right\}.
$$
Since $I(\nu)=\infty$ whenever $\nu$ is not $f$-invariant, we have
$$
\inf_{\nu\in K_\epsilon}I(\nu) 
=\inf_{\nu\in K_\epsilon\cap\mathcal M(f,X)}I(\nu).
$$
Here $\mathcal M(f,X)$ denotes the set of $f$-invariant Borel probability measures, and $P_{\top}(f,\varphi)$ is the topological pressure.
Since $\mu$ is the unique equilibrium state of $\varphi$,
$I$ vanishes on $\mathcal M(f,X)$ only at $\mu$. 
Moreover, $\mu\notin K_\epsilon$. 
Since $K_\epsilon\cap\mathcal M(f,X)$ is compact and 
$I$ is lower semicontinuous,
$$
\delta_\epsilon := \inf_{\nu\in K_\epsilon}I(\nu)
=\inf_{\nu\in K_\epsilon\cap\mathcal M(f,X)}I(\nu) >0.
$$
Combining this with \eqref{main theorem-mixing-(1)} and the identity
$$
\frac{1}{N}\sum_{i=0}^{N-1}\delta_{f^ix}(\tau)
=\frac{1}{N}\sum_{i=0}^{N-1}\tau(f^ix)
=\frac{1}{N}S_N\tau(x),
$$
we conclude that there exist constants $C,\delta>0$ such that
for every $N\in\mathbb N$,
$$
\mu\left( \left\{ x\in X:
\left\| \frac{S_N\tau(x)}{N}-\mu(\tau) \right\|\geq\epsilon 
\right\}  \right) \leq Ce^{-\delta N}.
$$
Indeed, \eqref{main theorem-mixing-(1)} gives this estimate 
for all sufficiently large $N$ with any $0<\delta<\delta_\epsilon$, 
and $C$ can be enlarged to cover the finitely many remaining values of $N$.

Therefore, in both cases, $\tau$ satisfies the exponential large
deviation bounds required in \cite[Definition 3.2]{DDKN22-1}. 
 Since $\mu$ is exponentially mixing of all orders for $f$,
$m$ is exponentially mixing of all orders for
$\{\rho_{\bf t}\}$ by assumption, 
and $\tau$ satisfies the exponential large deviation bounds with nonzero drift,
\cite[Theorem 4.15]{DDKN22-1} implies that $\mu\times m$ 
is exponentially mixing of all orders for $F$.
The Central Limit Theorem is a direct consequence of
\cite[Corollary 5.2]{DDKN22-1}. 
This establishes the result for the case of exponential mixing of all orders.

Similarly, a slightly modified argument invoking
\cite[Theorem 4.1 (a)]{DDKN22-1}
yields the proof for the exponential mixing case.
\end{proof}

\text{\quad}\\

\textsc{Chern Institute of Mathematics and LPMC, Nankai University, Tianjin 300071 China}

{Email: dongchg@nankai.edu.cn}

\textsc{Chern Institute of Mathematics and LPMC, Nankai University, Tianjin 300071 China}

{Email: qiujieqiao@mail.nankai.edu.cn}


\begin{thebibliography}{99}

\bibitem{BORH23-2} S. Ben Ovadia, and F. Rodriguez Hertz, {\it Exponential volume limits}. arXiv:2308.03910v2.

\bibitem{BEG20} M. Bj\"orklund, M. Einsiedler, and A. Gorodnik, {\it Quantitative multiple mixing}, J. Eur. Math. Soc. (JEMS) {\bf 22} (2020), no. 5, 1475--1529.

\bibitem{Bow75} R. Bowen, {\it Equilibrium states and the ergodic theory of Anosov diffeomorphisms}. Lecture Notes in Math. {\bf 470}, Springer, Berlin, 1975.

\bibitem{BE17} O. Butterley, and P. Eslami, {\it Exponential mixing for skew products with discontinuities}, Trans. Amer. Math. Soc. {\bf 369} (2017), no. 2, 783--803. 


\bibitem{Dol02} D. Dolgopyat, {\it On mixing properties of compact group extensions of hyperbolic systems}, Israel J. Math. {\bf 130} (2002), 157--205.

\bibitem{DDKN22-1} D. Dolgopyat, C. Dong, A. Kanigowski, and P. N\'andori, {\it Mixing properties of generalized {$ T,T^{-1 }$} transformations}, Israel J. Math. {\bf 247} (2022), no. 1, 21--73.

\bibitem{DDKN22-2} D. Dolgopyat, C. Dong, A. Kanigowski, and P. N\'andori, {\it Flexibility of statistical properties for smooth systems satisfying the central limit theorem}, Invent. Math. {\bf 230} (2022), no. 1, 31--120.

\bibitem{DKRH24} D. Dolgopyat, A. Kanigowski, and F. Rodriguez Hertz, {\it Exponential mixing implies Bernoulli}, Ann. of Math. (2) {\bf 199} (2024), no. 3, 1225--1292.

\bibitem{Don} C. Dong, {\it On ergodic properties of time changes of partially hyperbolic homogeneous flows}, J. Mod. Dyn., (19) (2023), 541--559.

\bibitem{DK22} C. Dong, and A. Kanigowski, {\it Bernoulli property for certain skew products over hyperbolic systems}, Trans. Amer. Math. Soc. {\bf 375} (2022), no. 3, 1607--1628.

\bibitem{DQ-1} C. Dong, and Q. Qiao, {\it Finiteness of ergodic measures of maximal entropy for certain skew product systems}, preprint (2026).

\bibitem{Dou26} M. Doucette, {\it The Bernoulli property for $PSL_2\mathbb{R}$ skew products over exponentially mixing base}, arXiv:2609.08017. 

 \bibitem{Kan18} A. Kanigowski, {\it Bernoulli property for homogeneous systems}, arXiv 1812.03209. 

\bibitem{KH95} A. Katok, and B. Hasselblatt, {\it Introduction to the Modern Theory of Dynamical Systems}, Encyclopedia of Mathematics and its Applications {\bf 54}, Cambridge, 1995.

\bibitem{Kif90} Y. Kifer, {\it Large deviations in dynamical systems and stochastic processes}, Trans. Amer. Math. Soc. {\bf 321} (1990), no. 2, 505--524.

\bibitem{KS01} M. Kotani, and T. Sunada, {\it The pressure and higher correlations for an Anosov diffeomorphism}, Ergodic Theory Dynam. Systems {\bf 21} (2001), no. 3, 807--821.


\bibitem{Mal25} A. Maldonado, {\it Exponential mixing SRB measures are Bernoulli}, arXiv:2504.04044v2.


\bibitem{MPv} R. D. Mauldin, D. Preiss, and H. von Weizs\"acker, {\it Orthogonal transition kernels}, Ann. Probab. {\bf 11} (1983), no. 4, 970--988. 

\bibitem{OW73} D. S. Ornstein, and B. Weiss, {\it Geodesic flows are Bernoullian}, Israel J. Math. {\bf 14} (1973), 184--198.

\bibitem{Oseledec} V. I. Oseledec, {\it A multiplicative ergodic theorem. {C}haracteristic {L}japunov, exponents of dynamical systems}, Trudy Moskov. Mat. Ob\v s\v c. {\bf 19} (1968), 179--210.

\bibitem{PP90} W. Parry, and M. Pollicott, {\it Zeta functions and the periodic orbit structure of hyperbolic dynamics}, Astérisque No. 187-188 (1990), 268 pp.

\bibitem{Ro52} V. A. Rohlin, {\it On the fundamental ideas of measure theory}, Amer. Math. Soc. Translation {\bf 1952} (1952), no. 71, 55 pp.

\bibitem{FMT03} M. Field, I. Melbourne, and A. T\"or\"ok, {\it Decay of correlations, central limit theorems and approximation by Brownian motion for compact Lie group extensions}, Ergodic Theory Dynam. Systems {\bf 23} (2003), no. 1, 87--110. 

\end{thebibliography}
\end{document}